\DeclareFontFamily{U}{matha}{\hyphenchar\font45}
\DeclareFontShape{U}{matha}{m}{n}{
	<5> <6> <7> <8> <9> <10> gen * matha
	<10.95> matha10 <12> <14.4> <17.28> <20.74> <24.88> matha12
}{}
\DeclareSymbolFont{matha}{U}{matha}{m}{n}
\DeclareMathSymbol{\Lt}{3}{matha}{"CE}
\DeclareMathSymbol{\Gt}{3}{matha}{"CF}
\DeclareSymbolFont{mathc}{OML}{txmi}{m}{it}
\DeclareMathSymbol{\varvv}{\mathord}{mathc}{118}
\DeclareMathSymbol{\varnu}{\mathord}{mathc}{"17}
\DeclareMathSymbol{\varww}{\mathord}{mathc}{119}
\DeclareSymbolFont{mathd}{OML}{ztmcm}{m}{it}
\DeclareMathSymbol{\varalpha}{\mathord}{mathd}{11}
\DeclareMathSymbol{\varlambda}{\mathord}{mathd}{21}
\def\valpha{\text{\scalebox{0.85}{$\varalpha$}}}
\DeclareMathSymbol{\depsilon}{\mathord}{mathd}{15}
\def\vepsilon{\text{\scalebox{0.88}{$\depsilon$}}}
\DeclareMathSymbol{\varchi}{\mathord}{mathd}{31}
 \newcommand{\BR}{{\mathbb {R}}}
\def\CalJ{\text{\usefont{U}{dutchcal}{m}{n}J}\hskip 0.5pt}
\def\CalL{\text{\usefont{U}{dutchcal}{m}{n}L}\hskip 0.5pt}
\newcommand{\ds}{\displaystyle}
\newcommand{\ra}{\rightarrow}
\def\-{^{-1}}
\def\mod{\mathrm{mod}\, }
\def\vv{\varv}
\def\vw{\varw}
\def\nd{\mathrm{d}}
\def\snatural{\text{\scalebox{0.8}{$\natural$}}}
\def\ssharp{\text{\scalebox{0.8}{$\sharp$}}}
\def\sflat{\text{\scalebox{0.8}{$\flat$}}}
\def\lp {\left (}
\def\rp {\right )}
\def\Voronoi{Vorono\" \i \hskip 2.5 pt }
\renewcommand{\Im}{{\mathrm{Im} }}
\def\shskip{\hskip 1pt}
\g@addto@macro\normalsize{\setlength\abovedisplayskip{3pt}}
\g@addto@macro\normalsize{\setlength\belowdisplayskip{3pt}}
\newcommand{\delete}[1]{}
\theoremstyle{plain}
\newtheorem{thm}{Theorem}[section] \newtheorem{cor}[thm]{Corollary}
\newtheorem{lem}[thm]{Lemma}  \newtheorem{prop}[thm]{Proposition}
 \newtheorem{defn}[thm]{Definition}
\newtheorem {rem}[thm]{Remark}
\numberwithin{equation}{section}
\begin{document}

	\title{Beyond the Weyl barrier for $\mathrm{GL} (2)$ exponential sums   }
	\author[  R. Holowinsky, R. Munshi, and Z. Qi]{Roman Holowinsky, Ritabrata Munshi, and Zhi Qi}

	\address{Department of Mathematics, The Ohio State University\\ 231 W 18th Avenue\\
		Columbus, Ohio 43210, USA}
	\email{holowinsky.1@osu.edu}
	
	\address{Stat-Math Unit, Indian Statistical Institute\\ 203 B.T. Road\\ Kolkata, 700108, India.}
	\email{ritabratamunshi@gmail.com}
	
	\address{School of Mathematical Sciences, Zhejiang University, Hangzhou, 310027, China}
	\email{zhi.qi@zju.edu.cn}
	
	\thanks{The third author is supported by the National Natural Science Foundation of China (Grant No. 12071420).}

	\begin{abstract}
		In this paper, we use the Bessel $\delta$-method, along with new variants of the van der Corput method in two dimensions, to prove non-trivial bounds for $\mathrm{GL}(2)$ exponential sums beyond the Weyl barrier. More explicitly, for sums of $\mathrm{GL}(2)$ Fourier coefficients twisted by $e(f(n))$, with length $N$ and phase $f(n)=N^{\beta} \log n / 2\pi$ or $a n^{\beta}$, non-trivial bounds are established for $ \beta < 1.63651... $, which is beyond the Weyl barrier at $\beta = 3/2$.
	\end{abstract}

	\subjclass[2010]{11L07, 11F30, 11F66}
	\keywords{Fourier coefficients, cusp forms, exponential sums.}
	
	\maketitle

	\section{Introduction}

	Let $g \in S^{\star}_k (D, \xi)$ be a holomorphic cusp newform of level $D$, weight $k$, nebentypus character $\xi$, with the Fourier expansion  
	$$g (z) = \sum_{n=1}^{\infty} \lambdaup_g (n) n^{(k-1)/2} e (n z), \quad e(z) = e^{2 \pi i z} ,$$
	for  $\Im \, z > 0$.  
	
	In this paper,  we consider the following smoothed exponential sum 
	\begin{align}
		S_{f} (N) = \sum_{n=1}^\infty \lambdaup_g(n) e(f  (n) ) V\left(\frac{n}{N}\right),
	\end{align}
	where the weight function $V \in  C_c^{\infty} (0, \infty) $ and the phase function $f  $ is of the  form:
	\begin{align}\label{0eq: f(x)=Tphi(x/N)}
		f (x) = N^{\beta} \phi (x/N),
	\end{align}
	with  $ \beta > 1+\vepsilon$ for an arbitrarily small $\vepsilon > 0$, and 
	\begin{align}\label{0eq: phi =}
		\phi  (x) = \left\{ \begin{aligned}
			&  \ds   \frac {  \log x}   {2\pi} ,   \\
			& a x^{\shskip\beta}, 
		\end{aligned}  \right.
	\end{align}  
	for a {\it fixed}  real number $a \neq 0$.  For the logarithm case, if one lets $N = t^{1/\beta}$,  then  $S_{f} (N) = N^{- it} \cdot S_g (N, t) $ with 
	\begin{align}
		S_g (N, t) = \sum_{n=1}^\infty  \lambdaup_g(n) n^{i t} V\left(\frac{n}{N}\right). 
	\end{align}
	For the monomial case,   $\beta$ is also considered {\it fixed}, and $S_{a, \shskip \beta} (N) $ is often used to denote the exponential sum:
	\begin{align}\label{1eq: defn of S ab(N)}
		S_{a, \shskip \beta} (N) = \sum_{n=1}^\infty \lambdaup_g(n) e  ( a n^{\shskip\beta}  ) V\left(\frac{n}{N}\right). 
	\end{align}
	
	Thanks to the Rankin--Selberg theory, we know that  $|\lambdaup_g (n)|$'s obey the Ramanujan conjecture on average:
	\begin{align}\label{2eq: Ramanujan}
		\sum_{n \shskip \leqslant N} |\lambdaup_g (n)|^2 \Lt_{g} N. 
	\end{align} 
	An application of the Cauchy--Schwarz inequality followed by \eqref{2eq: Ramanujan} yields the trivial bound $S_f (N) \Lt_{ g} N$. 
	
	In \cite{AHLQ-Bessel},  for the range $1 - \vepsilon < \beta < 3/2 - \vepsilon$, with the aid of a so-called   Bessel $\delta$-method, the following non-trivial `Weyl bound' is proven:
	\begin{align}\label{0eq: bound for 3/2}
		S_{f} (N) \Lt_{g, \shskip   \phi, \shskip \vepsilon} N^{   \frac 1   2 + \frac 1 3 \beta     + \vepsilon    }. 
	\end{align}
	This extends a result of Jutila \cite{Jut87} for modular forms $g$ of level $D =1$. The primary purpose of this paper is to break the upper `Weyl barrier' at $\beta = 3/2$. For this we have the following theorem.  


	\begin{thm}\label{main-theorem 1}
		
		Let $N  >  1$.  Let $a \neq 0$ be a fixed real number.
		Let $V (x) \in C_c^{\infty} (0, \infty) $ be a smooth function with support in $[1, 2]$ and derivatives $V^{(j)} (x) \Lt_{j} 1$  for every $j = 0, 1, 2, ...$.  Let $g \in S^{\star}_k (D, \xi)$ and $ \lambdaup_g(n) $ be its Fourier coefficients.  
		
		{\rm(1)}  We have  
		\begin{align}\label{1eq: main bound, log, 1}
			\sum_{n=1}^\infty  \lambdaup_g(n) n^{i t} V\left(\frac{n}{N}\right)  \Lt_{g,  \shskip \vepsilon}   N^{  \frac {115} {188} } t^{ \frac {139} {564} +\vepsilon}  ,  
		\end{align}
		if $t^{ \frac {139} {219}  + \vepsilon}   \leqslant N \leqslant t^{\frac {79} {115}  }$, and
		\begin{align}\label{1eq: main bound, log, 2}
			\sum_{n=1}^\infty  \lambdaup_g(n) n^{i t} V\left(\frac{n}{N}\right)  \Lt_{g,  \shskip \vepsilon} N^{  \frac {2763} {3758} } t^{ \frac {304} {1879} +\vepsilon} ,
		\end{align}
		if $t^{ \frac {608} {995}  + \vepsilon} \leqslant N \leqslant t^{\frac  {2791} {4311} }    $. 
		
		{\rm(2)}  We have 
		\begin{equation}\label{1eq: main bound, monomial, 1}
			\sum_{n=1}^\infty \lambdaup_g(n) e  ( a n^{\shskip\beta}  ) V\left(\frac{n}{N}\right) \Lt_{g,\shskip a, \shskip \beta, \shskip \vepsilon}   
			N^{  \frac {115} {188} + \frac {139} {564} \beta +\vepsilon},  
		\end{equation} 
		if    $\beta \in [115/79,  219/139) \smallsetminus  \{ 3/2\} $, and
		\begin{equation}\label{1eq: main bound, monomial, 2}
			\sum_{n=1}^\infty \lambdaup_g(n) e  ( a n^{\shskip\beta}  ) V\left(\frac{n}{N}\right) \Lt_{g,\shskip a, \shskip \beta, \shskip \vepsilon}   N^{  \frac {2763} {3758} + \frac {304} {1879} \beta + \vepsilon},  
		\end{equation} 
		if $ \beta   \in [ 4311/2791  ,  995/608 ) \smallsetminus \left\{ 8/5, 37/23, 66/41, 29/18, 50/31, 21/13, 34/21, 13/8 \right\}$.

	\end{thm}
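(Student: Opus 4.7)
The plan is to build on the Bessel $\delta$-method of \cite{AHLQ-Bessel} but to push past the Weyl barrier $\beta = 3/2$ by inserting refined two-dimensional van der Corput estimates after the Voronoi transformation. As a preliminary observation, I would argue that parts (1) and (2) are essentially equivalent via stationary phase: setting $N = t^{1/\beta}$ in the monomial case $a n^{\beta}$ and performing a saddle point analysis (with stationary point dictated by $\beta$) reduces the $a n^{\beta}$ sum to the $n^{it}$ sum of part (1) up to acceptable error. This is why the admissible ranges of $N/t^{1/\beta}$ in (1) and of $\beta$ in (2) are in one-to-one correspondence, and why the excluded rational values $\{ 8/5, 37/23, 66/41, 29/18, 50/31, 21/13, 34/21, 13/8 \}$ in (2) align with endpoints of dyadic subdivisions of the $(N,t)$ region needed to interpolate between \eqref{1eq: main bound, log, 1} and \eqref{1eq: main bound, log, 2}.

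The core reduction then proceeds in three steps. First, apply the Bessel $\delta$-method of \cite{AHLQ-Bessel} to separate the multiplicative variable $n$ from the oscillation $e(f(n))$, expressing the sum through an auxiliary modulus $q$ and an integral of Bessel transforms with inverse length roughly $N^{\beta-1}$. Second, apply GL$(2)$ \Voronoi summation to the $\lambdaup_g$-sum, converting it into a dual sum of length approximately $q^2/N$ decorated by Kloosterman sums and an $H$-type Bessel transform; stationary phase on the resulting oscillatory integrals isolates the dominant saddle and produces an effective phase $\Phi(q, m_1, m_2)$. Third, Cauchy--Schwarz in the modulus $q$ (together with an opening of the Kloosterman sums via their Fourier expansion) reduces matters to bounding a smooth two-dimensional exponential sum in two of the variables, to which van der Corput type estimates can be applied.

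The heart of the argument is then the treatment of this 2D sum by variants of the van der Corput $A$- and $B$-processes tailored to two variables: a $B$-process in one variable followed by $A$-differencing in the other, iterated with differencing parameters jointly optimized, yields bounds whose exponents satisfy a linear recursion. A short sequence of such steps produces the exponent pair $(115/188, \, 139/564)$ of \eqref{1eq: main bound, log, 1}; a longer sequence yields $(2763/3758, \, 304/1879)$ of \eqref{1eq: main bound, log, 2}, and the upper limit $\beta < 995/608 = 1.63651\ldots$ is exactly where the latter bound ceases to beat the trivial one. The main obstacle I anticipate is verifying nondegeneracy of the Hessian of $\Phi$ uniformly in the $(N,t)$ parameters, since the 2D van der Corput bounds demand size and separation estimates on $\det \Phi''$ and its derivatives; these can degenerate near special loci or boundary values of $\beta$, which is where the excluded set in (2) enters. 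Balancing the differencing parameters across the two dimensions, tracking the loss from each $A$-process against the gain from each $B$-process, and then optimizing the Bessel $\delta$-method modulus $q$ against all of this, is the bulk of the technical work.
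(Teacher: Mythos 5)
Your proposal correctly identifies the overarching architecture — Bessel $\delta$-method with Voronoi, then Cauchy--Schwarz and Poisson/stationary-phase to reach a two-dimensional exponential sum, then two-dimensional van der Corput iterations — which is indeed the paper's strategy. However, several of the specific claims in your outline are wrong in ways that matter, and filling them in correctly is the heart of the difficulty.

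First, parts (1) and (2) are \emph{not} reduced to one another via a stationary phase saddle-point trade. Instead, both the logarithm case and the monomial case enter the same framework in parallel: the phase $\phi(x) = \log x / 2\pi$ or $a x^\beta$ produces a dual ``Legendre'' phase $\psi(y) = \log y / 2\pi$ or $b y^{\valpha}$ with $\valpha = \beta/(\beta-1)$, and the paper's van der Corput theory for almost-separable double sums is set up to act on a family $\boldsymbol{\mathrm{F}}_2^{\gamma}$ with $\gamma = 0$ (logarithm) or $\gamma = \valpha$ (monomial). This is why exceptional $\beta$-values appear only in part (2): they are the monomial $\valpha$ for which the phase exponent $\gamma$ falls into the finite exceptional set of the one-dimensional exponent pair $(13/84, 55/84)$ propagated through the $A$- and $B$-processes (each $A$ sends $\gamma \mapsto \gamma - 1$, each $B$ sends $\gamma \mapsto \gamma/(\gamma-1)$). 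They have nothing to do with dyadic subdivisions of the $(N,t)$ region, nor with Hessian degeneracy. For instance, $\beta = 13/8$ corresponds to $\valpha = 13/5 = 2 + 3/5$, which is one of the non-admissible $\gamma$ listed in Theorem \ref{thm: vdC}.

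Second, the pair $(115/188, 139/564)$ appearing in your write-up are simply the $N$- and $t$-exponents of the final bound, not van der Corput exponent pairs; the actual exponent pairs used are $A_2^2 B A_2 B A_2 (1/6, 2/3) = (7/188, 327/376)$ for \eqref{1eq: main bound, log, 1} and $ABABABABA(13/84, 55/84) = (359/3758, 2791/3758)$ for \eqref{1eq: main bound, log, 2}, inserted into the generic bound $S_f(N) \Lt N^{7/6 - 2(\lambdaup - \kappaup)/3 + (2\lambdaup/3 - 1/3)\beta + \vepsilon}$.

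Third, the description of the van der Corput steps is off: the paper's $A$-process differences jointly in $(h,h)$ or $(h_1, \pm h_2)$ and its $B$-process is a simultaneous two-dimensional van der Corput transform, not a one-variable $B$ followed by a one-variable $A$. The point of the paper's setup is that the phase is ``almost separable'' (separable main part plus a small mixing term $\omega$), so the Hessian is almost diagonal and the domain stays almost rectangular throughout; this is precisely what avoids the Hessian-degeneracy difficulties you flag, which do plague general monomial 2D sums but not this situation. Finally, your ``Cauchy--Schwarz in the modulus $q$'' step is misremembered: Cauchy--Schwarz is applied to the Voronoi-dual variable $n$ to remove $\lambdaup_g(n)$, not to the auxiliary (prime) modulus.

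The key missing ingredient in your proposal is the formulation and proof of the two-dimensional van der Corput theory for almost-separable phases (Section~\ref{sec: van der Corput}): the simple method with $A_2$- and $B$-processes and the second method with the sharper $A$- and $B$-processes, together with a careful tracking of how the parameters $(M_1, M_2, T_1, T_2, N)$ and the admissible $\gamma$ evolve at each step. Without this, you cannot determine the ranges of $\beta$ (or $N$ vs.~$t$) in which the bounds hold, nor the excluded set, nor even the final exponents.
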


	Note that $  {115} / {79} = 1.45569... $, $ 219/139 = 1.57554...$, $ 4311/2791 = 1.54461...$, and $995/608=1.63651... $. Therefore the Weyl barrier at $3/2 = 1.5$ is extended to $  1.63651... $.


	Our idea is to use the two-dimensional stationary phase method to transform the off-diagonal sum in the Bessel $\delta$-method to  certain  double exponential sums, and then develop two new   van der Corput  methods of exponent pairs to treat this type of sums with `almost separable' phase.  More explicitly, if $(\kappaup, \lambdaup)$ is such an exponent pair, then we may prove  
	\begin{align}\label{0eq: bound for S(N)}
		S_f (N) \Lt_{g, \shskip   \phi, \shskip \vepsilon} N^{\frac 7 6 - \frac 2 3 (\lambdaup - \kappaup) + \lp \frac 23 \lambdaup - \frac 1 3 \rp \beta + \vepsilon} .
	\end{align}

	\begin{defn}\label{defn: theta-barrier}
		For an exponent pair $(\kappaup, \lambdaup)$ we define its $\beta$-barrier by
		\begin{align}\label{2eq: defn of theta}
			\beta (\kappaup, \lambdaup) =   \frac { 4\lambdaup   - 4 \kappaup -1 } {  4\lambdaup -2}. 
		\end{align}
	\end{defn}
	
	The bound in \eqref{0eq: bound for S(N)} is better than the trivial bound $N$ if and only if  $\beta$ does not exceed the barrier $ \beta (\kappaup, \lambdaup)$, so we seek  $(\kappaup, \lambdaup)$  with  $\beta$-barrier as large as possible. 
	
	By  using the one-dimensional van der Corput method in the trivial manner, one may already extend the Weyl barrier to a $\beta$-barrier  at $59/38 = 1.55263...$. Next, by our first  van der Corput method, the  exponent pair $ (7/188, 327/376) $ yields the $\beta$-barrier at $ 219/139 = 1.57554...$ as above. Further, by our second  van der Corput method, the  exponent pair $ (359/3758, 2791/3758)$  yields the $\beta$-barrier at  $995/608=1.63651... $. See Remark \ref{rem: trivial}, \S\S \ref{sec: simple process ABABA}, \ref{sec: improvements}, \ref{sec: ABABABA}, and \ref{sec: remarks, 2} for   detailed discussions.  
	
	Our secondary object is to improve the `Weyl bound' in \eqref{0eq: bound for 3/2} for $1 < \beta < 3/2$. However, the quantity of   improvement is not our main concern. 
	
	\begin{thm}\label{main-theorem 2}
		
		Let notation be as above. Let $q  $ be a positive integer. Set $Q = 2^q$ and define  $$ \beta_1 = \frac {219} {139}, \qquad  \beta_q = 1 + \frac {9 Q} {7+ 9 q Q} \quad \text{{\rm(}$q=2, 3, ...${\rm)}}.$$
		We have  
		\begin{equation} \label{0eq: bound, q, 2}
			S_{f} (N) \Lt_{g,\shskip \phi, \shskip \vepsilon}   
			N^{ \frac 1 2 + \frac 1 3 \beta + \frac {     7 (2q+7) } { 12 (  27 Q-7)} - \frac {       7 (2q+5)  } { 12 (  27 Q-7)} \beta +\vepsilon}   
		\end{equation} 
		for $\beta \in [\beta_{q+1}, \beta_q)  $, with $\beta \neq 1 + 1/(q+1)$ in the monomial case. 
	\end{thm}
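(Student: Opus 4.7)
The plan is to apply the general bound \eqref{0eq: bound for S(N)} with a suitable family of exponent pairs $(\kappaup_q, \lambdaup_q)$ parameterized by the integer $q$. Matching the exponent in \eqref{0eq: bound for S(N)} against the one in \eqref{0eq: bound, q, 2} pins down the required pair uniquely as
$$(\kappaup_q, \lambdaup_q) = \left(\frac{7}{4(27 Q - 7)},\ 1 - \frac{7(2q+5)}{8(27 Q - 7)}\right), \qquad Q = 2^q,$$
with $\lambdaup_q - \kappaup_q = 1 - 7(2q+7)/(8(27Q-7))$. At $q=1$ this recovers $(7/188, 327/376)$, the exponent pair produced in \S\ref{sec: simple process ABABA} by the first van der Corput method. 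The proof thus reduces to producing $(\kappaup_q, \lambdaup_q)$ as a valid exponent pair for each $q \geq 1$ and identifying the interval on which it supplies the bound \eqref{0eq: bound, q, 2}.

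I would build $(\kappaup_q, \lambdaup_q)$ inductively in $q$. Starting from the base pair $(\kappaup_1, \lambdaup_1)$, each successive pair is produced by applying one further step of the two-dimensional van der Corput $\mathsf{A}$-process to the almost-separable double exponential sum that emerges from the Bessel $\delta$-method together with the 2D stationary phase analysis of Theorem \ref{main-theorem 1}. The algebra organising the induction is the recursion
$$27 \cdot 2^{q+1} - 7 = 2(27 \cdot 2^q - 7) + 7,$$
which mirrors the denominator-doubling of the classical A-transform $\kappa \mapsto \kappa/(2\kappa+2)$, while the additive shift of $7$ reflects the extra bookkeeping introduced by the 2D process. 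Once $(\kappaup_q, \lambdaup_q)$ is in hand, its substitution into \eqref{0eq: bound for S(N)} simplifies directly to
$$\tfrac{7}{6} - \tfrac{2}{3}(\lambdaup_q - \kappaup_q) + \bigl(\tfrac{2}{3}\lambdaup_q - \tfrac{1}{3}\bigr)\beta = \tfrac{1}{2} + \tfrac{1}{3}\beta + \frac{7(2q+7)}{12(27Q-7)} - \frac{7(2q+5)}{12(27Q-7)}\beta,$$
matching the exponent in \eqref{0eq: bound, q, 2}. The stated range $[\beta_{q+1}, \beta_q)$ is then the natural interval on which $(\kappaup_q, \lambdaup_q)$ realises this bound non-trivially (the $\beta$-barrier calculation of Definition \ref{defn: theta-barrier} applied to the constructed pair exceeds $\beta_q$) and is preferred over the adjacent iterates.

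The hard part will be the inductive construction itself. Each additional $\mathsf{A}$-step demands control over increasingly high derivatives of the iterated composed phase, plus increasingly delicate bookkeeping of the truncation errors and secondary stationary-point contributions inherited from earlier levels; without careful management these accumulated errors can swamp the main term. The exceptional values $\beta = 1 + 1/(q+1)$ excluded in the monomial case are the arithmetic trace of this: at such $\beta$ a second-derivative condition internal to the $q$-th iteration degenerates, extending in a precise way the removal of $\beta = 3/2$ in Theorem \ref{main-theorem 1}(2) at $q=1$. Given the iterated construction, the verification of the $\beta$-barrier via Definition \ref{defn: theta-barrier} and the substitution into \eqref{0eq: bound for S(N)} are routine algebra.
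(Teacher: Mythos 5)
Your identification of the required exponent pair
$$(\kappaup_q, \lambdaup_q) = \left(\frac{7}{4(27Q-7)},\ 1 - \frac{7(2q+5)}{8(27Q-7)}\right)$$
is exactly right, and the recursion verifying $A_2(\kappaup_q,\lambdaup_q) = (\kappaup_{q+1},\lambdaup_{q+1})$ with denominator $27\cdot 2^{q+1}-7 = 2(27Q-7)+7$ checks out. This is also what the paper does: the pair is $A_2^{q+1}BA_2BA_2(1/6,2/3)$ from the first (simple) two-dimensional van der Corput method, recorded in Table \ref{table: exp pairs, 1.1}, and Theorem \ref{thm: simple vdC} delivers precisely the bound $S_g^2(M)\Lt M^{2-\frac{7}{54Q-14}(q-\valpha+7/2)+\vepsilon}$ for $\valpha \in [q+1+7/9Q,\,q+2+7/18Q]$, which translates under $\beta=\valpha/(\valpha-1)$ into the interval $[\beta_{q+1},\beta_q)$. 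So the proposal is substantially the same proof.

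Two corrections are worth flagging. First, you describe the source of difficulty as ``increasingly delicate bookkeeping of \ldots secondary stationary-point contributions inherited from earlier levels.'' That misplaces the effort: the part of the process that grows with $q$ is the block $A_2^{q+1}$, which is pure Weyl differencing (Lemma \ref{lem: A-process, 2.1} and Corollary \ref{cor: Aq, 2.1}), producing no stationary points at all. The stationary-phase work lives in the two $B$-processes, and these occur a \emph{fixed} number of times independent of $q$. What actually has to be checked as $q$ grows is more prosaic: that the successive $H$-cutoffs in Corollary \ref{cor: Aq, 2.1} push the final $T$-parameter below $M=1/\delta$ (so Lemma \ref{lem: reduction, separable, 1} applies), that the conditions $\kappaup+3\lambdaup\geqslant 2$ and $3\kappaup+\lambdaup\geqslant 1$ hold at the stages where $A_2$ and $B$ are applied, and that the $\delta < M/T$ hypothesis of Lemma \ref{lem: B-process, 2.1} is satisfied. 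Second, you assert but do not derive the interval $[\beta_{q+1},\beta_q)$; the paper pins it down by exactly these $H$-cutoff and domain conditions in \S\ref{sec: simple process ABABA}. Your identification of $\beta=1+1/(q+1)$ (i.e.\ $\valpha=q+2$) as the excluded monomial case is correct and does correspond to $\gamma$ landing in the exceptional set $\{1,\dots,q+2,q+5/2,q+8/3,q+3\}$ of Theorem \ref{thm: simple vdC}; that part of your reasoning is sound.
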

	
	The estimate  in \eqref{0eq: bound, q, 2} is a consequence  of our first van der Corput method, and may be considered as `sub-Weyl' for $1 < \beta < 1.57554...  $. Note that when $q = 1$,  \eqref{0eq: bound, q, 2} amounts to \eqref{1eq: main bound, log, 1} and  \eqref{1eq: main bound, monomial, 1} in Theorem \ref{main-theorem 1}.  Our second  van der Corput method, though   stronger in principle, does not always work for $\beta < 1.54461...$. See    \S  \ref{sec: remarks, 2}.

	Theorem \ref{main-theorem 2} may be further improved by the Vinogradov method if $ \beta  $ is close to $1$. 
	
	\begin{thm}\label{main-theorem: Vinogradov}
		There is an absolute constant $c > 0$ such that 
		\begin{equation} \label{0eq: bound, Vinogradov}
			S_{f} (N) \Lt_{g,\shskip \phi,  \shskip \vepsilon}   
			N^{  \frac 1 2 + \frac 1 3 \beta  -   \frac 1 3 c (\beta-1)^3 / \beta^2 + \vepsilon }    
		\end{equation} 
		for  $1 < \beta \leqslant 4/3$, with $\beta \neq 1+1/q$ {\rm(}$q=3, 4, ...${\rm)} in the monomial case. 
	\end{thm}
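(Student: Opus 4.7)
The plan is to re-run the reduction developed for Theorems \ref{main-theorem 1} and \ref{main-theorem 2}: applying the Bessel $\delta$-method and the two-dimensional stationary phase transforms $S_f(N)$ into a double exponential sum whose inner sum admits, for any applicable exponent pair $(\kappaup, \lambdaup)$, the schematic estimate \eqref{0eq: bound for S(N)}. Rewriting that bound as
$$S_f(N) \Lt_{g, \shskip \phi, \shskip \vepsilon} N^{\frac 1 2 + \frac 1 3 \beta - \frac 2 3 \bigl[(\beta - 1)(1 - \lambdaup) - \kappaup\bigr] + \vepsilon},$$
we see that the saving over the Weyl bound $N^{1/2 + \beta/3 + \vepsilon}$ is proportional to $(\beta - 1)(1 - \lambdaup) - \kappaup$, and we want to maximize this over admissible exponent pairs.

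For $\beta$ close to $1$, the van der Corput iterates that power Theorem \ref{main-theorem 2} give only an exponentially small gain in $1/(\beta - 1)$, so a stronger source of exponent pairs is needed. I would insert Vinogradov exponent pairs coming from Vinogradov's mean value theorem (in its sharp form due to Wooley and to Bourgain--Demeter--Guth). These furnish, for each integer $K \geq 3$, a pair $(\kappaup_K, \lambdaup_K)$ with $1 - \lambdaup_K$ of order $1/K$ and $\kappaup_K$ smaller than $1 - \lambdaup_K$ by a further power of $1/K$ (up to log factors). Substituting such a pair and choosing $K$ as an appropriate function of $\beta$ to balance the two contributions $(\beta - 1)(1 - \lambdaup_K)$ and $\kappaup_K$ produces a polynomial power gain in $\beta - 1$; a sufficient, though not necessarily optimal, optimization yields a gain of the shape $c(\beta - 1)^3/(3\beta^2)$ for some absolute $c > 0$. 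Since only the existence of such a $c$ matters, one can afford to fix $K$ and the underlying Vinogradov exponents rather coarsely and to absorb log factors into $N^{\vepsilon}$.

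The main obstacle is verifying the hypotheses of the Vinogradov $K$-th derivative test for the specific phase function of our double exponential sum. One must check that, uniformly over the relevant dyadic ranges of the outer summations, the $K$-th derivative of the phase has the required size and regularity, and that the summation length is long enough for the $K$-th derivative analysis to produce a non-trivial estimate. The restriction $\beta \leq 4/3$ in the statement encodes the regime in which these checks succeed with a choice of $K$ compatible with the optimization above; the exceptional values $\beta = 1 + 1/q$ in the monomial case are the expected degeneracies where the stationary phase or the derivative tests become singular, and beyond $\beta = 4/3$ one falls back on the van der Corput estimates of Theorem \ref{main-theorem 2}.
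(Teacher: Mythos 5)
Your overall strategy---re-use the chain of reductions leading to \eqref{0eq: bound for S(N)} and then feed in a stronger source of exponent pairs coming from Vinogradov's method---is indeed the route the paper takes. The paper applies Vinogradov's $k$-th derivative estimate (in the form of \cite[Theorem 8.25]{IK}) to the $m_1$-sum in $S_\psi^2(N,T)$, estimates the $m_2$-sum trivially, obtains $S_\psi^2(N,T) \Lt (T/N)^{2-c/\gamma^2}$ for $\gamma \geqslant 4$, and then runs this through the same computation as in \S\ref{sec: proof by vdC} with $\kappaup = 0$ and $1 - \lambdaup \asymp 1/\gamma^2$; the substitution $\gamma = \valpha = \beta/(\beta-1)$ converts the saving $1/\valpha^2$ into $(\beta-1)^2/\beta^2$ and, after multiplying by the $(\beta-1)$ factor, produces the $(\beta-1)^3/\beta^2$ in the theorem. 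The condition $\beta \leqslant 4/3$ is exactly $\valpha \geqslant 4$.

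However, your quantitative description of the Vinogradov input is wrong and would not reproduce the stated exponent. You assert that Vinogradov exponent pairs have $1-\lambdaup_K \asymp 1/K$ with $\kappaup_K$ smaller by a further power of $1/K$. The Vinogradov-type bound supplies a saving of order $1/K^2$ (up to log factors, removable after BDG/Wooley), i.e.\ $1 - \lambdaup \asymp 1/K^2$ with $\kappaup = 0$ in the relevant regime, not $1/K$. Moreover, there is no genuine optimization over $K$: the order $K$ of the derivative test is essentially forced to be $K \asymp \valpha$, since the phase of the $m_1$-sum has size $T = M^{\valpha}$ and the method needs $T \lesssim M^{K}$ to apply. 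With your stated shape $1-\lambdaup_K \asymp 1/K$, the forced choice $K \asymp 1/(\beta-1)$ yields a gain of order $(\beta-1)^2$, not $(\beta-1)^3$, so the claimed exponent would not come out. Finally, you should make explicit that Vinogradov is applied only in the $m_1$-variable, with trivial summation over $m_2$; this is what makes the $T^{\kappaup}$ factor disappear ($\kappaup = 0$) and is essential for the gain $(\beta-1)(1-\lambdaup) - \kappaup$ to be positive as $\beta \to 1^+$.
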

	
	Finally, for the non-generic case when $\phi (x) = a x^{1+1/q}$, we can still attain a sub-Weyl bound  by the Weyl method. 
	
	\begin{thm}\label{main-theorem: Weyl}
		Let notation be as above. For $q=2, 3,...$ set $Q= 2^q$. We have
		\begin{align}\label{1eq: bound for S, q odd}
			S_{a, \shskip 1+1/q} (N) \Lt_{a, \shskip q, \shskip \vepsilon} N^{\frac 1 2 + \frac {   Q -1/(q+1)} {3    Q - 2 /(q+2)} \frac {q+1} {q} + \vepsilon  }   
		\end{align}
		if $q$ is odd, and
		\begin{align}\label{1eq: bound for S, q even}
			S_{a, \shskip 1+1/q} (N) \Lt_{a, \shskip q, \shskip \vepsilon} N^{\frac 1 2 + \frac {   Q +1/(q+2)} {3    Q +4  /(q+2)} \frac {q+1} {q} + \vepsilon  }   
		\end{align}
		if $q$ is even. 
	\end{thm}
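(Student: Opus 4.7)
The plan is to combine the Bessel $\delta$-method of \cite{AHLQ-Bessel} with the classical Weyl method of iterated A-processes on integer-degree polynomial phases. The key algebraic coincidence at $\beta = 1 + 1/q$ is that the Legendre dual of $\phi(x) = ax^{1+1/q}$ has integer exponent $q+1$: this is precisely why the generic two-dimensional van der Corput framework behind \eqref{0eq: bound for S(N)} degenerates here (and why Theorem \ref{main-theorem 2} explicitly excluded $\beta = 1 + 1/(q+1)$ in the monomial case), and at the same time why the classical Weyl method becomes available.

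First, I would apply the Bessel $\delta$-method to $S_{a, \shskip 1+1/q}(N)$ and use two-dimensional stationary phase on the resulting integral transform of the weight $V$ to reduce the problem to a double sum of the schematic form
\[
\Lambda \sum_{c \shskip \sim C} \sum_{m \shskip \sim M} \lambdaup_g(m) \shskip e \lp C_0 \shskip \frac{m^{q+1}}{c^{q}} \rp,
\]
where $C$, $M$ are explicit powers of $N$, $\Lambda$ collects length normalisations, and $C_0$ depends only on $a$ and $q$. The inner phase in $m$ is then a genuine polynomial of integer degree $q+1$, which is the structural feature that opens the door to the Weyl method.

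Next, to the inner $m$-sum I would apply the Weyl method. Cauchy--Schwarz against $\lambdaup_g(m)$, together with the Rankin--Selberg bound \eqref{2eq: Ramanujan} controlling $\sum |\lambdaup_g|^2$, reduces matters to the purely exponential sum in $m$; I then iterate the van der Corput A-process $q$ times. Each iteration lowers the polynomial degree by one, so the $q$-th iteration leaves an essentially linear phase summable by geometric series. Optimising the dyadic differencing step sizes $H_1, \ldots, H_q$ and then summing the output over $c \sim C$ yields an exponent of the shape $\tfrac{1}{2} + \theta_q \cdot \tfrac{q+1}{q}$, with $\theta_q$ taking the quotient form appearing in \eqref{1eq: bound for S, q odd} or \eqref{1eq: bound for S, q even}.

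The parity distinction between \eqref{1eq: bound for S, q odd} and \eqref{1eq: bound for S, q even} reflects the behaviour of the final differencing step on the quadratic residual phase: when $q+1$ is even (so $q$ is odd), the leading coefficient of the quadratic phase is compatible with a B-process, gaining the refinement encoded by $-1/(q+1)$; while when $q+1$ is odd (so $q$ is even), one instead absorbs the slightly less favourable penalty $+1/(q+2)$. The main obstacle I foresee is propagating the $c$-dependence of the polynomial coefficients cleanly through $q$ iterated differencings, since any slack in tracking how $c^{q}$ interacts with each step $H_j$ would destroy the sub-Weyl saving; however, the necessary bookkeeping is a routine (if tedious) exercise once the Legendre-dual set-up has been made explicit.
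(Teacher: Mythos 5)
Your proposal misidentifies the structure that makes the Weyl method work here, and as a result it would not reproduce the paper's argument; the two points below are not matters of bookkeeping but genuine gaps.

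First, the object to which the Weyl method is applied is not a sum of the schematic form $\sum_c\sum_m \lambdaup_g(m)\, e(C_0 m^{q+1}/c^q)$: after the Bessel $\delta$-method, Cauchy--Schwarz in $n$, and Poisson in $n$, the Fourier coefficients $\lambdaup_g$ have already been removed, and one is left with the pure double exponential sum $S^2_{\psi}(N,T)$ of Proposition~\ref{prop: exp sum}, whose phase is almost separable: $g(y_1,y_2)=T\psi(Ny_2/T)-T\psi(Ny_1/T)+N\omega(Ny_1/T,Ny_2/T)$ with $\psi(y)=by^{q+1}$. So the inner phase is not merely ``a genuine polynomial of degree $q+1$'': it is a polynomial of degree $q+1$ \emph{plus a small non-polynomial correction}, and this distinction is the whole point. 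If the phase were exactly a degree-$(q+1)$ polynomial, the $(q+1)$-st derivative test would give $T/M^{q+1}\asymp 1$ at $\beta=1+1/q$ (since $T=N^{1+1/q}$ and $M=T/N$), and the resulting bound is trivial. The paper instead applies Lemma~\ref{lem: Weyl} with $k=q+2$: the $(q+2)$-nd derivative annihilates the degree-$(q+1)$ polynomial main term, so the only contribution is from the mixing error $N\omega$, and Proposition~\ref{prop: non-generic} quantifies exactly how large this surviving $(q+2)$-nd derivative is. Your proposal, which iterates $A$-processes $q$ times to reduce a degree-$(q+1)$ polynomial to a linear phase, is one order too low and would not capture the term that actually yields the saving.

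Second, the parity split between \eqref{1eq: bound for S, q odd} and \eqref{1eq: bound for S, q even} has nothing to do with a $B$-process on a quadratic residual. It comes from Proposition~\ref{prop: non-generic}: when $q$ is odd, the first-order error term $\delta(x_{\oldstylenums{0}}(y_1),x_{\oldstylenums{0}}(y_2))$ contributes a nonzero $(q+2)$-nd $y_1$-derivative of size $\asymp K^2/|\varww|$, whereas when $q$ is even that term is a polynomial of degree $q/2$ in $y_1$ (because $x_{\oldstylenums{0}}(y)=cy^q$ and the cross term involves $y_1^{q/2}y_2^{q/2}$) so its $(q+2)$-nd derivative vanishes, and one must pass to the second-order correction $\rho_{\snatural}^2$, giving the smaller $F\asymp K^4/(T|\varww|^2)$. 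This is proved via Lemmas~\ref{lem: rho n 2} and~\ref{lem: rho n 3}, and it feeds into the two different choices of $F$ and $K$ in \S\ref{sec: Weyl}, which is where the quantities $-1/(q+1)$ and $+1/(q+2)$ in the exponents originate. Without this derivative analysis of the non-separable correction, there is no mechanism producing the parity-dependent exponents, and the proposed $B$-process heuristic does not substitute for it.
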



	\subsection*{Reduction of the sub-Weyl subconvex  problem}
	
	Let $ L (s, g) $ be the $L$-function associated to the holomorphic newform $g$. The functional equation and the Phragm\'en--Lindel\"of
	principle imply the $t$-aspect convex bound
	\begin{align*}
		L (1/2+ it , g) \Lt_{ g, \shskip \vepsilon} t^{  1 / 2+\vepsilon}, \qquad \text{$t>1$} . 
	\end{align*}
	By the approximate functional equation, 
	\begin{align}\label{1eq: AFE}
		L (1/2+it, g) \Lt_{ g, \shskip \vepsilon} t^{\vepsilon} \sup_{t^{\theta} < N < t^{1+ \vepsilon}}  \frac {|S_g (N, t)|} {\sqrt{N}} + t^{\theta / 2 }, 
	\end{align}
	where  $N$ is dyadic.  The Weyl bound in \eqref{0eq: bound for 3/2} reads $ S_g (N, t) \Lt \sqrt{N} t^{1/3+\vepsilon} $. By substituting this into  \eqref{1eq: AFE}  and choosing $\theta = 2/3+\vepsilon$, we obtain the Weyl subconvex  bound:
	\begin{align*}
		L (1/2+ it , g) \Lt_{ g, \shskip \vepsilon} t^{  1 / 3 +\vepsilon}, 
	\end{align*}
	which was first proven by Good \cite{Good} in the full-level case $D = 1$. Any bound of the type
	\begin{align*}
		L (1/2+ it , g) \Lt_{ g, \shskip \vepsilon} t^{  1 / 3 - \rho +\vepsilon}, 
	\end{align*}
with $\rho > 0$, is a sub-Weyl subconvex bound.

	\begin{thm}\label{thm: sub-Weyl reduction}
		For any given $\delta > 0$, there exists $ \rho > 0 $ such that 
		\begin{align}\label{1eq: reduction, sub-Weyl}
			L (1/2+it, g) \Lt_{ g, \shskip \vepsilon} t^{\vepsilon} \sup_{t^{1-\delta} < N < t^{1+ \vepsilon}}  \frac {|S_g (N, t)|} {\sqrt{N}} + t^{  1 / 3 - \rho +\vepsilon},  
		\end{align}
		with $N$ dyadic. 
	\end{thm}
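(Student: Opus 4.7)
\emph{Plan of proof.} I would apply the approximate functional equation \eqref{1eq: AFE} with the threshold $\theta = 608/995 + \epsilon$, so that the error term $t^{\theta/2} = t^{304/995 + \epsilon/2} = t^{1/3 - 83/2985 + \epsilon/2}$ is already absorbed into the desired bound $t^{1/3 - \rho + \epsilon}$ for any $\rho < 83/2985$. The remaining task is to bound the supremum of $|S_g(N,t)|/\sqrt N$ over dyadic $N \in (t^{608/995 + \epsilon}, t^{1+\epsilon})$, which I split at $N = t^{1-\delta}$: the main range $(t^{1-\delta}, t^{1+\epsilon})$ supplies exactly the first term of \eqref{1eq: reduction, sub-Weyl}, and the short range $(t^{608/995 + \epsilon}, t^{1-\delta}]$ must be shown to contribute $O(t^{1/3 - \rho + \epsilon})$ for some $\rho = \rho(\delta) > 0$.

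For each dyadic $N$ in the short range write $\beta = \log t/\log N \in [1/(1-\delta), 995/608 - O(\epsilon))$. I would then patch together the four sub-Weyl sources already established in the paper: Theorem~\ref{main-theorem 1}(1) covers $\beta \in [115/79, 219/139]$, Theorem~\ref{main-theorem 1}(2) covers $\beta \in [4311/2791, 995/608)$, Theorem~\ref{main-theorem 2} covers $\beta \in (1, 219/139)$, and Theorem~\ref{main-theorem: Vinogradov} covers $\beta \in (1, 4/3]$. Since $4311/2791 < 219/139$, these four intervals overlap and together exhaust the entire window $(1, 995/608)$ with no gap. Moreover, because \eqref{1eq: reduction, sub-Weyl} concerns only the logarithmic phase --- the identity $N^{-it} S_g(N,t) = S_f(N)$ places us in the log case --- none of the exceptional monomial $\beta$-values listed in those theorems interferes.

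Each of the four cited estimates takes the shape $|S_g(N,t)|/\sqrt N \ll t^{1/3 - \rho(\beta) + \epsilon}$ with $\rho(\beta) > 0$ a continuous function of $\beta$ on its interval. The Vinogradov savings $\sim c(\beta - 1)^3/\beta^2$ degrades as $\beta \to 1^+$, but this regime is excluded by $\beta \geqslant 1/(1-\delta)$; on each of the other three intervals $\rho(\beta)$ is bounded below by a positive constant as it is a continuous positive function on a compact set. Patching these yields a uniform lower bound $\rho_1 = \rho_1(\delta) > 0$ on $[1/(1-\delta), 995/608 - O(\epsilon)]$, and then $\rho := \min\bigl(\rho_1(\delta),\, 83/2985 - \epsilon/2\bigr)$ finishes the argument. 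The only non-trivial point is the bookkeeping in the second paragraph --- verifying that the four $\beta$-intervals cover $(1, 995/608]$ without any gap and that the savings $\rho(\beta)$ remains uniformly positive once $\beta$ is kept away from $1$; both facts follow directly from the explicit exponents already written down in the cited theorems, so no new analytic input is required.
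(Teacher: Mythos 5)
Your proposal is correct and matches the paper's own proof in both structure and substance: choose $\theta = 608/995 + \vepsilon$ in the approximate functional equation so that the error $t^{\theta/2}$ is already sub-Weyl, and then invoke Theorems~\ref{main-theorem 1}, \ref{main-theorem 2}, and \ref{main-theorem: Vinogradov} to obtain a sub-Weyl bound for $S_g(N,t)$ on the range $t^\theta < N \leqslant t^{1-\delta}$. The paper states this in two sentences; you have simply spelled out the $\beta$-bookkeeping that the authors leave implicit.
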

	
	\begin{proof}
		Choose $\theta = 608/995  + \vepsilon$. Then Theorem \ref{main-theorem 1}, \ref{main-theorem 2}, and \ref{main-theorem: Vinogradov} ensure the existence of $\rho > 0$ so that  
		\begin{align}\label{1eq: sub-Weyl for S(N, t)}
			S_g (N, t) \Lt \sqrt{N} t^{1/3 - \rho + \vepsilon},
		\end{align}
		whenever $ t^{\theta} < N \leqslant t^{1 - \delta} $. Thus \eqref{1eq: reduction, sub-Weyl} follows immediately on inserting \eqref{1eq: sub-Weyl for S(N, t)} into \eqref{1eq: AFE}. 
	\end{proof}

Theorem \ref{thm: sub-Weyl reduction} manifests that to get a sub-Weyl subconvex bound for the $L (s, g)$  it suffices to prove sub-Weyl bounds for $S_g (N, t)$ with $N$ in the transition  range $ t^{1-\delta} < N < t^{1+\vepsilon} $.

	\subsection*{Notation} 
	By $F \Lt G$ or $F = O (G)$ we mean that $|F| \leqslant c G$ for some constant $c > 0$, and by $F \asymp G$  we mean that  $F \Lt G$ and $G \Lt F$. We write $F \Lt_{g, \shskip \phi, \, \dots} G$ or $F = O_{g, \shskip \phi, \, \dots} (G)$ if the implied constant $c$ depends on  $g$, $\phi, \dots$. For notational simplicity, in the case  $\phi (x) = a x^{\shskip \beta}$, we shall not put $\phi$, $a$ or $\beta$ in the subscripts of $\Lt$ and $O$. 
	
	Let $p$ always stand for prime. The notation $n \sim N$ or $p \sim P$ is used for integers or primes in the dyadic segment $[N, 2N]$ or $[P, 2P]$, respectively. 
	
	We adopt the usual $\vepsilon$-convention of analytic number theory; the value of $\vepsilon $ may differ from one occurrence to another.


	\section{Setup}
	
	Throughout this paper, we assume $1+\vepsilon < \beta < 5/3$ and set $T = N^{\beta}$, so that
	\begin{align}\label{2eq: N<T}
		N^{1+\vepsilon} < T < N^{5/3 }. 
	\end{align}
	
	We start with the following result from \cite[\S 4]{AHLQ-Bessel}, which is a consequence of applications of  the \Voronoi summation formula along with  the Bessel $\delta$-identity. 
	
	\begin{prop}\label{key lemma}
		Let $U(x), V (x) \in C_c^{\infty} (0, \infty) $ be supported in $[1, 2]$, with $U (x) \geqslant 0$ and $V^{(j)} (x) \Lt_{j} 1$  for every $j = 0, 1, 2...$. Define $C_U = (1+i) / \widetilde{U} (3/4)$, with $\widetilde{U}$ the Mellin transform of $U$. For a fixed newform $g \in S^{\star}_k (D, \xi)$, let $ \lambdaup_g(n) $ be its Fourier coefficients, and let $\eta_g$ denote its Atkin--Lehner pseudo-eigenvalue. 	Let parameters $N, X > 1$, $ P > D$ be such that
		\begin{align}\label{4eq: asummptions on N,X,P}
			P^2/N < X, \qquad N < X^{1-\vepsilon}.
		\end{align} 
		Let $P^{\star}$ be the number of primes in $[P, 2P]$.
		We have
		\begin{equation}\label{4eq: S(N)=S(N,X,P)}
			\begin{split}
				S_{f} (N) = \sum_{n \sim N} \lambdaup _g(n)e(f (n)) V\left(\frac{n}{N}\right) = S_{f} (N, X, P)
				+ O  \left( \frac {P \hskip -1pt \sqrt N } { \sqrt X } + \frac{N^{5/4}X^{1/4}}{P^{3/2}}\right),
			\end{split}
		\end{equation}
		with
		\begin{equation}\label{beginning object}
			\begin{split}
				S_{f} (N, X, P) = \frac{ N^{1/4} }{P^{\star} X^{3/4} }  \sum_{ p \shskip \sim P}   
				\frac {\xi (p)}   {\sqrt {p}}
				& \sum_{r \sim N}e(f (r))  V_{\snatural}   \left(\frac{r}{N}\right) \\
				\cdot &\sum_{n \sim DX} \overline {\lambdaup_{  g } (n)} S(n,r;p)
				e \bigg( \frac {2 \sqrt {n r}} { \sqrt D p } \bigg) U \Big(  \frac { n } { D X }  \Big),
			\end{split}
		\end{equation}
		where $V_{\snatural}    (x) =  C_U \eta_g \xi (-1)  D^{-1/2} \cdot x^{1/4} V (x) $  is again supported in $[1, 2]$, with  $V_{\snatural}^{(j)} (x) \Lt_{j} 1$. 
	\end{prop}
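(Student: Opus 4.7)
The plan is to apply the Bessel $\delta$-method developed in \cite{AHLQ-Bessel} (following Jutila and Munshi): spectrally detect the diagonal $n=r$ via the Petersson trace formula in prime levels $Dp$ averaged over primes $p \sim P$, and then dualize the resulting off-diagonal Kloosterman--Bessel term through the GL(2) \Voronoi summation formula.

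First, I would introduce an auxiliary smooth $n$-sum weighted by $U(n/(DX))$, and for each prime $p \sim P$ invoke the Petersson trace formula in level $Dp$ to write $\delta_{n=r}$ as the spectral sum $\sum_{h \in S_k(Dp,\xi)} \omega_h \lambda_h(n)\overline{\lambda_h(r)}$ minus the Kloosterman--Bessel tail
\begin{align*}
2\pi i^{-k} \sum_{Dp\shskip\mid\shskip c}\frac{S(n,r;c)}{c} J_{k-1}\!\left(\frac{4\pi\sqrt{nr}}{c}\right).
\end{align*}
The Mellin normalizer $C_U = (1+i)/\widetilde{U}(3/4)$ is chosen so that, after combining with the $(nr)^{-1/4}$-factor from the oscillatory asymptotic of $J_{k-1}$, the external prefactor $N^{1/4}/X^{3/4}$ and the weight $V_{\snatural}(x) = C_U \eta_g \xi(-1)D^{-1/2}x^{1/4}V(x)$ emerge cleanly.

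Second, averaging over $p \sim P$, the contribution of $g$ to the spectral side---via its oldform embeddings in $S_k(Dp,\xi)$ and the Atkin--Lehner transfer $g\mapsto g|W_{Dp}$, which contributes $\eta_g \xi(-1)$ and the prime weight $\xi(p)/\sqrt{p}$---reproduces the original sum $S_{f}(N)$. All other Hecke newforms in $S^{\star}_k(Dp,\xi)$ contribute an error bounded by Cauchy--Schwarz together with Rankin--Selberg \eqref{2eq: Ramanujan} applied to $\lambda_h$, giving $P\sqrt{N}/\sqrt{X}$. In the Kloosterman--Bessel tail I would restrict to the leading modulus $c=\sqrt{D}p$ (higher moduli are negligible under \eqref{4eq: asummptions on N,X,P}), extract the oscillatory phase $e(2\sqrt{nr}/(\sqrt{D}p))$ from $J_{k-1}$, and apply GL(2) \Voronoi to the $n$-sum to convert $\lambda_g(n)$ into $\overline{\lambda_g(n)} S(n,r;p)$, yielding the triple sum in \eqref{beginning object}.

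The secondary error $N^{5/4}X^{1/4}/P^{3/2}$ arises from truncating the \Voronoi dual beyond $n \sim DX$ via stationary phase analysis of the \Voronoi kernel for $\lambda_g$. The main obstacle is the careful bookkeeping of normalizations---combining $C_U$, $\eta_g \xi(-1)$, $D^{-1/2}$, the oldform-to-newform factor $\xi(p)/\sqrt{p}$, and the $(nr)^{-1/4}$-factor from the $J_{k-1}$-asymptotic---into the stated clean form of $V_{\snatural}$, together with uniform control of the $J_{k-1}$- and \Voronoi-asymptotic remainders in $p, r, n$, which is what justifies that $V_{\snatural}$ is again a bump function on $[1,2]$ with derivatives $V_{\snatural}^{(j)} \Lt_j 1$.
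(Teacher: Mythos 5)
The paper cites this proposition from [AHLQ, \S 4], where the proof is via the Bessel $\delta$-identity (a direct Hankel/Bessel orthogonality relation) combined with the $\mathrm{GL}(2)$ Voronoi summation formula; no spectral decomposition enters. Your proposal instead goes through the Petersson trace formula in level $Dp$, which is a genuinely different route and, as written, does not recover \eqref{beginning object}.

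Two concrete problems stand out. First, the diagonal $\delta_{n=r}$ you propose to detect is empty under the stated constraints: $U(n/(DX))$ forces $n \sim DX$, while $r \sim N$ with $N < X^{1-\vepsilon}$ by \eqref{4eq: asummptions on N,X,P}, so the two ranges never overlap; the Petersson detection of $\delta_{n=r}$ therefore returns nothing, and the claim that the $g$-part of the spectral side reproduces $S_f(N)$ cannot go through. Second, the ``leading modulus $c=\sqrt{D}p$'' is not a modulus appearing in the Petersson--Kloosterman tail for level $Dp$: that tail runs over $Dp \mid c$, so the smallest term is $c=Dp$, and $\sqrt{D}p$ need not even be an integer. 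The extracted phase would then be $e(\pm 2\sqrt{nr}/(Dp))$, off by a factor $\sqrt{D}$ from the $e(2\sqrt{nr}/(\sqrt{D}p))$ in \eqref{beginning object}. That $\sqrt{D}$ is the fingerprint of the Voronoi kernel $J_{k-1}\bigl(4\pi\sqrt{ny}/(p\sqrt{D})\bigr)$ for a level-$D$ form twisted additively to conductor $p$, which is where the factor actually originates. Relatedly, your final Voronoi step on the $n$-sum, applied after you already have a Kloosterman sum and a Bessel weight from Petersson, would create a second Gauss/Kloosterman sum and a second Bessel transform; there is no reason this composite should collapse to the single clean triple sum in \eqref{beginning object}.

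In short, the Petersson route is a red herring here. The actual argument keeps $g$ at its fixed level $D$ throughout, produces $\overline{\lambda_g(n)}$ at scale $n \sim DX$ and the Kloosterman sum $S(n,r;p)$ through Voronoi at modulus $p$ (with the additive-character average over $a\, (\mathrm{mod}\, p)$), and uses the Bessel $\delta$-identity to reconstitute the diagonal in $r$; the constraint $P^2/N < X$ in \eqref{4eq: asummptions on N,X,P} is precisely what places the dual variable at scale $DX$, and the two error terms in \eqref{4eq: S(N)=S(N,X,P)} arise from truncating/regularizing the Bessel integrals, not from any spectral large-sieve bound over the Hecke newforms $h \neq g$ in $S_k(Dp,\xi)$.
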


	For convenience, we introduce a parameter $K$ such that 
	\begin{align}\label{5eq: XN=P2K2}
		X = {P^2K^2}/{N},  
	\end{align}
	\begin{align}\label{5eq: K<T}  N^{\vepsilon} < K < T^{1-\vepsilon}, 
	\end{align} 
	\begin{align}\label{2eq: N<PK}
		N^{1+\vepsilon} < PK. 
	\end{align}
	It is clear that the assumptions in \eqref{4eq: asummptions on N,X,P} are well justified. 

	Recall from \eqref{0eq: f(x)=Tphi(x/N)} that $f (x) = T \phi (x/N)$. An application of the Poisson summation to the $r$-sum in \eqref{beginning object} leads us to 
	\begin{align*} 
		S_{f} (N, X, P) = \frac{N^{2 }}{P^{\star} (P K)^{3/2} }   \sum_{n \sim DX}   \overline {\lambdaup_{  g } (n)} 
		U \Big(\frac{n}{D X} \Big) \hskip -1.5pt
		\sum_{p \shskip \sim P} \hskip -1.5pt \frac {\xi (p)} {\sqrt p} \hskip -1.5pt
		\mathop{\sum_{r \shskip \Lt \shskip R }}_{ (r, \shskip p)=1 } \hskip -1.5pt e \hskip -1pt \left(\hskip -1pt -\frac{\overline{r}n }{p} \hskip -1pt \right) \CalJ (n,r,p) & \\
		+ O \big(N^{-A}\big) &, 
	\end{align*}
	where 
	\begin{equation*}
		\begin{split}
			\CalJ (y ) =	\CalJ (y,r,p)=\int_{0 }^{\infty} V_{\snatural}   (x) e \bigg(    T \phi (x) -\frac{N r x}{p} +   \frac{2\sqrt{N x y}}{ \sqrt D p}   \bigg)  \mathrm{d} x ,
		\end{split}
	\end{equation*}
	and
	\begin{align}\label{5eq: R = Pt/N}
		R =    P T / N  .
	\end{align}

	Next, by the Cauchy inequality and the Ramanujan bound on average for  the Fourier coefficients $  {\lambdaup_{  g } (n)} $ as in \eqref{2eq: Ramanujan}, we infer that
	\begin{align*} 
		S_{f} (N,   X,   P)^2  & \Lt_{g}   \frac{N^{3  }}{P^{\star 2}   {PK} }
		\sum_{n \sim DX}\Bigg|  \sum_{p \shskip \sim P} \hskip -1.5pt \frac {\xi (p)} {\sqrt p}   \hskip -1.5pt
		\mathop{\sum_{r \shskip \Lt \shskip R }}_{ (r, \shskip p)=1 } 
		e \hskip -1pt \left(\hskip -1pt -\frac{\overline{r}n }{p} \hskip -1pt \right) 
		\CalJ (n,  r,  p) \Bigg|^2 \hskip -1pt U   \hskip -1pt \Big(\hskip -0.5pt \frac{n}{D X} \hskip -0.5pt \Big) + N^{-A}  \\
		& = \displaystyle\frac{N^{3 }}{{P^{\star 2}}  P K} \hskip -1.5pt \underset{p_1,\shskip p_2 \shskip \sim P}{\sum \sum}  \frac { \xi (p_1 \overline p_2)  }  {\sqrt {p_1 p_2} }  \mathop{\mathop{\sum\sum}_{r_1, \shskip r_2 \shskip \Lt \shskip R }}_{ (r_1,   p_1) = (r_2,   p_2)  =1 }   \\
		& \hskip 41pt \cdot \sum_{n \sim DX} e \hskip -1.5pt \left(\frac{\overline{r}_2 n }{p_2} \hskip -1pt - \hskip -1pt \frac{\overline{r}_1 n }{p_1} \hskip -1pt \right) \hskip -1pt \CalJ (n,r_1,p_1)\overline{\CalJ (n,r_2,p_2)} U \hskip -1pt \Big(\hskip -0.5pt \frac{n}{D X} \hskip -0.5pt \Big) + N^{-A}  . \end{align*}
	After applying Poisson summation with modulus $p_1p_2$ to the $n$-sum, in view of the discussions in \cite[\S\S 5.2, 5.4]{AHLQ-Bessel}, we arrive at (our notation here is slightly different)
	\begin{align}\label{5eq: S<Sdiag+Soff}
		S_{f} (N,   X,   P)^2  \Lt_{g}    \big| {S}_{\mathrm{diag}}^2 \big|    +  \big|   {S}_{\mathrm{off} }^2 \big|   + N^{-A} ,
	\end{align}
	with 
	\begin{equation}\label{S(diag)}
		\begin{split}
			{S}_{\mathrm{diag}}^2  = \frac{N^{3 } X }{P^{\star 2}  P  K }  
			\sum_{p \shskip \sim P}  \frac 1 p \mathop{\mathop{\mathop{\sum\sum}_{r_1, \shskip r_2 \shskip \Lt \shskip R }}_{ (r_1 r_2,  \shskip p)  =1 }}_{r_1\equiv \, r_2 (\mod p)}
			\CalL\left( 0 ; r_1,r_2,p ,p \right)  ,
		\end{split}
	\end{equation}   
	and 
	\begin{equation}\label{S(off)}
		\begin{split}
			{S}_{\mathrm{off}}^2   
			=  \frac{N^{3 } X }{ P^{\star 2}  P      K  }  \, \mathop{\underset{p_1,\shskip p_2 \shskip \sim P}{\sum \sum}}_{p_1 \neq p_2} \frac { \xi (p_1 \overline p_2)  }  {\sqrt {p_1 p_2} }  {\sum_{ 1\leqslant |n| \Lt N/K }}    S (n, p_1, p_2 )  ,
	\end{split}\end{equation}
	in which  
	\begin{align}\label{2eq: S(p)}
		S (n, p_1, p_2)	=   \mathop{\mathop{\mathop{\sum\sum}_{r_1, \shskip r_2 \shskip \Lt \shskip R }}_{ r_1\equiv \shskip \overline{n}  p_2  (\mod p_1) }}_{r_2\equiv -\overline{n}  p_1  (\mod p_2) }   \CalL\left( {DXn} / {p_1p_2}; r_1,r_2,p_1,p_2\right), 
	\end{align}
	where 
	\begin{align*}
		\CalL ( \vw  ) = \CalL ( \vw; r_1,r_2,p_1,p_2  )  = \int_{0 }^{\infty} U(y)  \CalJ (DXy,r_1,p_1) \overline{\CalJ (DXy,r_2,p_2)} \,e\left(- \vw y\right)\mathrm{d}y.
	\end{align*}
	In view of $1\leqslant |n| \Lt N/K$ and  $X = {P^2K^2}/{N}$, it is necessary that
	\begin{align}\label{2eq: x >K2/N}
		\frac {K^2} N \Lt	|\vw | \Lt K. 
	\end{align} 
	The following expression of $\CalL (\vw)$ is established in the proof of \cite[Lemma 5.4 (2)]{AHLQ-Bessel}: 
	\begin{align}\label{2eq: L(x), 2}
		\CalL (\vw) = \frac 1 {\sqrt {|\vw|} } \int_{-N^{\vepsilon}}^{N^{\vepsilon}}  
		\widehat{W}_{\snatural} (\vv) \CalL (\vw; \vv)  \shskip \mathrm{d} \vv + O \big(N^{-A} \big),
	\end{align} 
	where $ \widehat{W}_{\snatural} (\vv) $ is of Schwartz class (the Fourier transform of a certain $ {W}_{\snatural} \in C_c^{\infty} (0, \infty)$), satisfying  $
	\widehat{W}_{\snatural} (\vv) \Lt_A (1 +|\vv|)^{-A}$, 
	and
	\begin{align}\label{2eq: defn of L(x;v)}
		\CalL (\vw; \vv) =  \CalL ( \vw; \vv; r_1,r_2,p_1,p_2  )  =	\iint V_{\snatural} (x_1)  \overline{V_{\snatural}  (x_2)} e  ( T \phi (x_1, x_2; \vw; \vv) )   \shskip \mathrm{d} x_1 \shskip \mathrm{d} x_2, 
	\end{align}
	with phase function
	\begin{align}\label{2eq: f (v1, v2; v)}
		\phi ( x_1, x_2; \vw; \vv ) =   \phi (x_1) - y_1 x_1  -  \phi (x_2) + y_2 x_2  + \delta  (x_1, x_2; \vw; \vv )  ,
	\end{align}
	where
	\begin{align}\label{2eq: theta1, theta2}
		& y_1  = \frac {N r_1} {T p_1} , \quad \quad y_2 = \frac {N r_2} {T p_2}  , \\
		\label{2eq: delta(v1, v2; v)}   \delta   (x_1, x_2; \vw ; \vv)  = & \frac {K^2 P^2 } {T \vw }   \bigg( \frac {\sqrt{x_1}} {p_1}  - \frac   {\sqrt{x_2}} {p_2} \bigg)^2 + \frac {K P \vv} {T \vw} \bigg( \frac {\sqrt{x_1}} {p_1 } - \frac {\sqrt{x_2}} {p_2 } \bigg). 
	\end{align} 
	Note that 
	\begin{align}\label{2eq: bounds for delta}
		\frac{\partial^{j_1+j_2} \delta   (x_1, x_2; \vw ; \vv)}{ \partial x_1^{j_1 } \partial x_2^{ j_2} } \Lt_{j_1, \shskip j_2} \frac {K^2} {T |\vw|}, \quad (x_1, x_2) \in (1/16, 64)^2, \, \vv \in [-N^{\vepsilon}, N^{\vepsilon}]. 
	\end{align}
	In view of \eqref{2eq: x >K2/N}, the condition $ K^2/T  > N^{\vepsilon} $ in  \cite[Lemma 5.4 (2)]{AHLQ-Bessel}  may be weakened into  $ K^2/N > N^{\vepsilon} $, so we only require
	\begin{align}\label{2eq: sqrt N < K}
		K > N^{1/2+\vepsilon}. 
	\end{align}
	
	In \cite{AHLQ-Bessel}, it is proven that 
	\begin{align}\label{2eq: S diag}
		{S}_{\mathrm{diag}}^2  \Lt (KN+T N^{\vepsilon}) \log P , 
	\end{align}   
	and 
	\begin{align}\label{2eq: S off diag}
		{S}_{\mathrm{off}}^2   \Lt \frac {N T} {\sqrt{K}} .
	\end{align} 
	It is impossible to improve upon the bound  for the diagonal sum ${S}_{\mathrm{diag}}^2 $. The bound in \eqref{2eq: S off diag} for the off-diagonal sum ${S}_{\mathrm{off}}^2$ comes from estimating the integral $\CalL  ( \vw )$ by the two-dimensional derivative test. However, if a more careful stationary-phase analysis for  $\CalL  ( \vw  )$ is exploited, the double sum $ S (n, p_1, p_2) $ in \eqref{2eq: S(p)} could be expressed by  two-dimensional exponential sums, and hence there is hope for an extra saving for ${S}_{\mathrm{off}}^2  $. 
	For this, in view of \eqref{2eq: S(p)} and \eqref{2eq: L(x), 2}, we have 
	\begin{align}\label{2eq: S(p1, p2) <}
		S (n, p_1, p_2)  \Lt  \frac {\sqrt{p_1p_2}} {\sqrt{X n}} \int_{-N^{\vepsilon}}^{N^{\vepsilon}}  
		\big|\widehat{W}_{\snatural} (\vv) \big| \left| S (\vv; n, p_1, p_2) \right|   \mathrm{d} \vv + N^{-A} , 
	\end{align}
	with
	\begin{align}\label{2eq: S(v;n,p1,p2)}
		S (\vv; n, p_1, p_2) =	\mathop{\mathop{\mathop{\sum\sum}_{r_1, \shskip r_2 \shskip \Lt \shskip R }}_{ r_1\equiv \shskip \overline{n}  p_2  (\mod p_1) }}_{r_2\equiv -\overline{n}  p_1  (\mod p_2) }   \CalL\left( {DXn}/ {p_1p_2}; \vv;  r_1,r_2,p_1,p_2\right) .
	\end{align}

	\begin{prop}\label{prop: exp sum}    For  $\phi$ given as in {\rm\eqref{0eq: phi =}}, with  $a > 0$, define 
		\begin{align}\label{2eq: defn of psi}
			\psi (y) = \left\{ \begin{aligned}
				&  \ds    \frac{\log y}  {2\pi} ,    \\
				&   b y^{\valpha} ,
			\end{aligned}  \right. \quad \quad \valpha = \frac {\beta} {\beta-1}, \quad    b = \frac 1 {\valpha (a\beta)^{1/(\beta-1)}  } .
		\end{align}  
		We have 
		\begin{align}\label{2eq: S = S/T}
			S (\vv; n, p_1, p_2) = \frac {S_{\psi}^2 ( N, T )} {T }     + O   \lp \frac 1 {N^2} \rp , 
		\end{align}
		with  two-dimensional exponential sum 
		\begin{align}\label{2eq: S(p1,p2)}
			S_{\psi}^2 ( N, T ) = \hskip -1pt  \mathop{\sum\sum}_{\varOmega T/N \leqslant    m_1, \shskip  m_2     \leqslant \varOmega' T/N }  \hskip -1pt e \big(  g (m_1, m_2 )  \big)       V   \lp  \frac {N m_1}   T , \frac {N m_2}   T \rp   ,
		\end{align}
		where $  \varOmega ' >   \varOmega > 0  $ are constants, 
		\begin{align}\label{2eq: h (y1, y2)}
			g (y_1, y_2) = T \psi ( N y_2 /T) - T \psi (N y_1 / T )  + N \omega (N y_1/T, Ny_2/T),  
		\end{align}
		the function $ \omega ( y_1, y_2 )  \in C^{\infty}  [\varOmega, \varOmega']^2 $, with
		\begin{align}\label{2eq: bounds for rho}
			\frac{\partial^{j_1+j_2} \omega  (y_1, y_2 )}{ \partial y_1^{j_1 } \partial y_2^{ j_2} }  \Lt_{j_1, \shskip j_2}  1 , 
		\end{align} 
		and the function $V  (y_1, y_2 )  \in C^{\infty}_c [\varOmega, \varOmega']^2 $, with
		\begin{align}\label{2eq: bounds for V}
			\frac{\partial^{j_1+j_2} V  (y_1, y_2 )}{ \partial y_1^{j_1 } \partial y_2^{ j_2} }  \Lt_{j_1, \shskip j_2}  1  ;
		\end{align}
		the implied constants above are independent on the values of $\vv$, $n$, $p_1$, and $p_2$. 
	\end{prop}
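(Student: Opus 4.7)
The plan is to analyze the integral $\CalL(\vw;\vv)$ in \eqref{2eq: defn of L(x;v)} by two-dimensional stationary phase, and then to re-parameterize the congruence-restricted $r_1,r_2$ sums so that the stationary-phase exponential becomes the double sum $S_\psi^2(N,T)$. The unperturbed phase $\phi(x_1)-y_1x_1-\phi(x_2)+y_2x_2$ is separable, with stationary points $x_{0,i}$ determined by $\phi'(x_{0,i})=y_i$; since the perturbation $\delta$ satisfies the derivative bounds \eqref{2eq: bounds for delta} with $K^2/(T|\vw|)\ll N/T\ll 1$, the true stationary points are small (and smooth) perturbations of the $x_{0,i}$, and the Hessian is non-degenerate with determinant of order $|\phi''(x_{0,1})\phi''(x_{0,2})|$, a smooth non-vanishing function of $(y_1,y_2)$ on the relevant range.

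Applying the two-dimensional stationary phase expansion then gives
\begin{equation*}
\CalL(\vw;\vv)=\frac{A(y_1,y_2)}{T}\,e\bigl(-T\psi(y_1)+T\psi(y_2)+T\delta(x_{0,1},x_{0,2};\vw;\vv)+\text{l.o.t.}\bigr)+O\lp T^{-2}\rp,
\end{equation*}
where $A$ is a smooth bounded amplitude (absorbing $V_\snatural(x_{0,1})\overline{V_\snatural(x_{0,2})}/\sqrt{|\phi''(x_{0,1})\phi''(x_{0,2})|}$ and unimodular constants), and we have used the Legendre-transform identity $T[\phi(x_{0,i})-y_ix_{0,i}]=-T\psi(y_i)$ with $\psi$ as in \eqref{2eq: defn of psi}: this identity is checked by solving $\phi'(x_0)=y$ explicitly in the two cases, giving respectively $\psi(y)=\log y/(2\pi)$ (up to an additive constant that is absorbed into $A$) and $\psi(y)=by^{\valpha}$ with $\valpha=\beta/(\beta-1)$, $b=(\beta-1)/(\beta(a\beta)^{1/(\beta-1)})=1/(\valpha(a\beta)^{1/(\beta-1)})$. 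The lower-order terms in the stationary-phase expansion are of size $1/T$ and contribute only to the $O(N^{-2})$ error once divided by $T$.

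Next, use the congruence conditions in \eqref{2eq: S(v;n,p1,p2)} to write $r_1=r_1^*+p_1m_1$ and $r_2=r_2^*+p_2m_2$, where $r_1^*\equiv\overline{n}p_2\,(\mod p_1)$, $r_2^*\equiv-\overline{n}p_1\,(\mod p_2)$ are the unique residues in $[0,p_i)$; then
\begin{equation*}
y_i=\frac{Nr_i}{Tp_i}=\frac{Nm_i}{T}+\frac{Nr_i^*}{Tp_i},\qquad \frac{Nr_i^*}{Tp_i}\ll \frac{N}{T}\ll 1,
\end{equation*}
so the range $r_i\ll R=PT/N$ translates to $m_i\in[\varOmega T/N,\varOmega'T/N]$ for absolute constants $\varOmega,\varOmega'$. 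Taylor-expanding $T\psi(y_i)$ around $Nm_i/T$ gives
\begin{equation*}
T\psi(y_i)=T\psi(Nm_i/T)+T\psi'(Nm_i/T)\cdot Nr_i^*/(Tp_i)+\text{lower order},
\end{equation*}
in which the leading correction $\psi'(Nm_i/T)\cdot Nr_i^*/p_i$ has size $O(N)$, and each higher-order term gains a factor $N/T$. Collecting these Taylor corrections together with $T\delta(x_{0,1},x_{0,2};\vw;\vv)$ (also of size $O(N)$ by \eqref{2eq: bounds for delta} and \eqref{2eq: x >K2/N}) and the subdominant stationary-phase terms, one obtains a single phase $N\omega(Nm_1/T,Nm_2/T)$, where $\omega\in C^\infty[\varOmega,\varOmega']^2$ depends on $(\vv,n,p_1,p_2)$ but satisfies the derivative bounds \eqref{2eq: bounds for rho} uniformly, by the chain rule applied to the explicit smooth Taylor coefficients and to the smooth dependence of the stationary points on $(y_1,y_2)$. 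Absorbing the remaining smooth amplitude $A(Nm_1/T+Nr_1^*/(Tp_1),Nm_2/T+Nr_2^*/(Tp_2))$ into a weight $V(Nm_1/T,Nm_2/T)$ satisfying \eqref{2eq: bounds for V} yields exactly \eqref{2eq: S = S/T}, with error $O(1/N^2)$ coming from the stationary-phase remainder after summing $O(R^2)=O((PT/N)^2)$ terms and using $PK\gg N^{1+\vepsilon}$.

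The main obstacle is bookkeeping: one must show that after Taylor expansion in $r_i^*/(p_im_i)$ and inclusion of the $\delta$-term, every surviving contribution of size $O(N)$ is a smooth function of $Nm_1/T$ and $Nm_2/T$ alone (modulo bounded parameters), with derivative norms independent of $\vv,n,p_1,p_2$; the verification rests on the smoothness of the implicit stationary points $x_{0,i}(y_i)$ and on the uniform derivative bounds \eqref{2eq: bounds for delta} for $\delta$, both of which are valid throughout the ranges \eqref{2eq: x >K2/N} and \eqref{2eq: sqrt N < K}.
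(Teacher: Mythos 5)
Your proposal follows essentially the paper's own route: two-dimensional stationary phase applied to $\CalL$ (their Lemmas~3.2 and~5.7), the Legendre-transform identity $\phi(x_{0}(y))-y\shskip x_{0}(y)=-\psi(y)+c$ producing the $\pm T\psi(y_i)$ main phase, perturbative control of the shifted stationary point and of $\rho_{\snatural}$ via the technical lemmas in \S\S4--5, and finally re-parameterizing the congruence-restricted $r_i$-sum as $r_i=a_i+p_im_i$, absorbing the residue shifts $Na_i/(Tp_i)=O(N/T)$ into $\omega$ and $V$. One small slip in your error accounting: the congruence conditions reduce the $(r_1,r_2)$-sum to $\asymp (R/P)^2=(T/N)^2$ pairs rather than $O(R^2)$, and it is $(T/N)^2\cdot O(T^{-2})=O(N^{-2})$ that gives the error term --- the hypothesis $PK\Gt N^{1+\vepsilon}$ plays no role in this step.
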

	
	\begin{prop}\label{prop: non-generic}
		Suppose that $\phi (y) = a y^{1 + 1/q}$ with $q = 2, 3, ...$. Then 
		\begin{align}\label{2eq: rho aymp}
			y_1^{ q+2 }   \left|	\frac{\partial^{q+2} g  (y_1, y_2 )}{ \partial y_1^{q+2 }   } \right|  \asymp_{ q }  \left\{  \begin{aligned}
				&  { K^2} / {  |\vw|}, & & \ \text{if $q$ is odd}, \\
				&  { K^4} / T |\vw|^2, & & \ \text{if $q$ is even}.
			\end{aligned}  \right.   
		\end{align}   
	\end{prop}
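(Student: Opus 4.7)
The plan is to first observe that for $\phi(y) = ay^{1+1/q}$ we have $\beta = 1+1/q$ and hence $\alpha = q+1$, so $\psi(y) = by^{q+1}$ is a polynomial of degree $q+1$ and $\psi^{(q+2)} \equiv 0$. Therefore the $T\psi$-terms of $g$ in \eqref{2eq: h (y1, y2)} contribute nothing to $\partial^{q+2}_{y_1}g$, and the chain rule gives
\[
\frac{\partial^{q+2} g(y_1,y_2)}{\partial y_1^{q+2}} \,=\, \frac{N^{q+3}}{T^{q+2}}\,\omega^{(q+2,0)}(u_1,u_2), \qquad u_j := Ny_j/T \asymp 1.
\]
Multiplying by $y_1^{q+2} \asymp (T/N)^{q+2}$, the proposition reduces to showing
\[
N\,\bigl|\omega^{(q+2,0)}(u_1,u_2)\bigr| \,\asymp_q\, \begin{cases} K^2/|\vw|, & q\text{ odd}, \\[2pt] K^4/(T|\vw|^2), & q\text{ even}. \end{cases}
\]

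Next I would make explicit the second-order stationary-phase analysis underlying Proposition \ref{prop: exp sum}. Expanding the joint stationary points of the phase in \eqref{2eq: defn of L(x;v)} as $x_j^* = x_j^{(0)} + x_j^{(1)} + O(\delta^2)$, with $\phi'(x_j^{(0)}) = y_j$ and $x_j^{(1)} = \mp\partial_{x_j}\delta(x^{(0)})/\phi''(x_j^{(0)})$, a direct calculation gives
\[
N\omega(u_1,u_2) = T\delta(x^{(0)};\vw,\vv) - \frac{T(\partial_{x_1}\delta)^2}{2\phi''(x_1^{(0)})} + \frac{T(\partial_{x_2}\delta)^2}{2\phi''(x_2^{(0)})} + O\!\left(\frac{K^6}{T^2|\vw|^3}\right).
\]
Using $\sqrt{x_j^{(0)}} = c\,u_j^{q/2}$ with $c = (a\beta)^{-q/2}$, the leading piece $T\delta^{(0)}$ equals $c^2(K^2P^2/\vw)(u_1^{q/2}/p_1 - u_2^{q/2}/p_2)^2$ plus a smaller $\vv$-term, while the $j$-th correction simplifies to a constant multiple of $(K^4P^2/(T|\vw|^2))\,u_j^{-1}(u_1^{q/2}/p_1 - u_2^{q/2}/p_2)^2$.

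For the case analysis: when $q$ is odd, $u_1^{q/2}$ is a non-polynomial fractional power, so by Leibniz on $(u_1^{q/2}/p_1 - u_2^{q/2}/p_2)^2$ the $u_1^q/p_1^2$ summand vanishes under $\partial_{u_1}^{q+2}$ (polynomial of degree $q<q+2$) but the cross term $-2(u_1^{q/2}/p_1)(u_2^{q/2}/p_2)$ contributes $\asymp_q 1/P^2$; multiplying by the prefactor $K^2P^2/|\vw|$ yields the first bound. When $q$ is even, $u_1^{q/2}$ is a polynomial of degree $q/2$, so both $T\delta^{(0)}$ (polynomial of degree $q$ in $u_1$) and the $x_2$-correction are killed by $\partial_{u_1}^{q+2}$. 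Only the $x_1$-correction survives; expanding
\[
\frac{1}{u_1}\!\left(\frac{u_1^{q/2}}{p_1}-\frac{u_2^{q/2}}{p_2}\right)^{\!\!2} = \frac{u_1^{q-1}}{p_1^2} - \frac{2u_1^{q/2-1}u_2^{q/2}}{p_1 p_2} + \frac{u_2^q}{p_2^2\,u_1},
\]
only the last summand $u_2^q/(p_2^2 u_1)$ survives $\partial_{u_1}^{q+2}$, producing $(q+2)!\,u_2^q/(p_2^2 u_1^{q+3}) \asymp_q 1/P^2$; multiplying by $K^4P^2/(T|\vw|^2)$ yields the second bound.

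The main obstacle is the second-order stationary-phase bookkeeping together with the parity-dependent non-degeneracy: for $q$ even, the leading $T\delta^{(0)}$ contribution to $\partial^{q+2}_{u_1}\omega$ vanishes and one must precisely identify the single surviving monomial $u_2^q/(p_2^2 u_1)$ in the $x_1$-correction. Finally, the remainder $O(K^6/(T^2|\vw|^3))$ and the $\vv$-sub-leading piece of $T\delta^{(0)}$ are both subordinate, since $K^2 < T|\vw|$ (by $|\vw| \Gt K^2/N$ in \eqref{2eq: x >K2/N} together with $T > N$) and $K|\vv|/|\vw| < K^2/|\vw|$ (by $|\vv| \leq N^\epsilon < K$).
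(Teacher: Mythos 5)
Your proposal is correct and is essentially the paper's own argument, just packaged as an explicit second-order stationary-phase expansion rather than via the intermediate Lemmas \ref{lem: rho n 2} and \ref{lem: rho n 3}. The reduction $y_1^{q+2}\,\partial_1^{q+2}g\asymp N\,\omega^{(q+2,0)}(u_1,u_2)$, the expansion $N\omega = T\delta(x^{(0)}) - T(\partial_1\delta)^2/(2\phi''(x_1^{(0)})) + T(\partial_2\delta)^2/(2\phi''(x_2^{(0)})) + O(T\delta^3)$, and the parity dichotomy (the non-polynomial fractional power $u_1^{q/2}$ surviving $\partial_1^{q+2}$ when $q$ is odd, and only the $u_2^q/u_1$ monomial in the $x_1$-correction surviving when $q$ is even) are exactly the paper's computations. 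One point to be careful about, which you glossed over: the claimed $O(T\delta^3)$ remainder uniformly in all $\partial_1$-derivatives up to order $q+2$ is the content of Lemma \ref{lem: rho n 3}, which the paper itself flags as needing a higher-order extension of Lemmas \ref{lem: 1-dim, 2} and \ref{lem: 2-dim}; a complete write-up would have to carry that out.
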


	\section{Stationary phase lemmas}
	The following two lemmas respectively are consequences (or special cases) of	Theorem 7.7.1 and 7.7.5 in two dimensions in H\"ormander's book \cite{Hormander}. In the following, we use the standard abbreviations $\partial_1 = \partial / \partial x_1$ and $\partial_2 = \partial / \partial x_2$. 
	
	\begin{lem} \label{lem: partial integration}
		Let $K \subset \BR^2$ be a compact set and $X$ be an open neighbourhood of $K$. Let $k  $ be a non-negative integer.  If $u  \in C^{k}_c (K)$, $f   \in C^{k+1} (X)$, and $f  $ is real valued, then for $\lambdaup > 0$ we have
		\begin{equation*}
			\begin{split}
				\left| \int_{K} \hskip -3 pt  u(x) e (\lambdaup f (x)) \nd x     \right|  \leqslant \frac {C} {\lambdaup^{  k}}  \sum_{j_1+j_2 \leqslant k } \sup    \frac {\big| \partial_{1}^{j_1} \partial_{2}^{j_2}  u  \big|}    { \shskip | \shskip f'  |^{2k-j_1-j_2}  } ,
			\end{split}
		\end{equation*}
		where   $C$ is bounded when $f$ stays in a bounded set in $C^{k+1} (X)$. 
		
	\end{lem}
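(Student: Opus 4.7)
The plan is to prove the estimate by iterated non-stationary phase (integration by parts) in two variables. On any open set of $X$ where $|f'|$ does not vanish I would introduce the first-order differential operator
$$L = \frac{1}{2\pi i \lambdaup \, |f'|^{2}}\bigl((\partial_1 f)\,\partial_1 + (\partial_2 f)\,\partial_2 \bigr),$$
and observe by direct computation that $L(e(\lambdaup f)) = e(\lambdaup f)$. Its formal transpose
$$L^{t} u = -\frac{1}{2\pi i \lambdaup}\left[\partial_1\lp\frac{\partial_1 f}{|f'|^{2}}\, u\rp + \partial_2\lp\frac{\partial_2 f}{|f'|^{2}}\, u\rp\right]$$
has no boundary contribution under integration by parts because $u \in C^{k}_{c}(K)$, so $k$-fold iteration yields the identity
$$\int_{K} u(x)\, e(\lambdaup f(x))\, \nd x = \int_{K} (L^{t})^{k} u(x) \cdot e(\lambdaup f(x))\, \nd x,$$
and the desired estimate would then follow by bringing absolute values inside the integral.

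The main work is the combinatorial expansion of $(L^{t})^{k} u$ by the Leibniz rule. By induction on $k$, I expect to show that $(L^{t})^{k} u$ is a finite linear combination of terms of the form
$$\frac{1}{\lambdaup^{k}}\cdot\frac{P_{\alpha}(\partial^{\mu} f : |\mu|\leqslant k+1)}{|f'|^{2 m_{\alpha}}}\cdot \partial_{1}^{j_1}\partial_{2}^{j_2} u, \qquad j_1+j_2\leqslant k,$$
where each $P_{\alpha}$ is a polynomial in the partial derivatives of $f$ that contains exactly $2m_{\alpha} - (2k - j_1 - j_2)$ first-order factors $\partial_{i} f$. Bounding each such first-order factor by $|f'|$ leaves the effective weight $|f'|^{-(2k - j_1 - j_2)}$, and since $f$ stays in a bounded subset of $C^{k+1}(X)$, the remaining higher-order derivatives of $f$ are absorbed into a uniform constant $C$. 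Integrating over $K$ and using that $u$ has compact support then gives the stated bound.

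The main obstacle is the inductive bookkeeping in the second step. One must verify that when $L^{t}$ is applied once more, the Leibniz rule redistributes the new derivative in such a way that the structural form above is preserved: the new $\partial_{i}$ either falls on $u$ (increasing $j_1+j_2$ by $1$, while the accompanying factor $\partial_{i} f$ reduces the effective weight by one power of $|f'|$), or on an existing $\partial_{i} f$ factor (increasing its order), or on a factor $|f'|^{-2m}$, which via $\partial_{i} |f'|^{2} = 2\sum_{\ell} (\partial_{i} \partial_{\ell} f)(\partial_{\ell} f)$ introduces exactly one new first-order factor while incrementing the denominator exponent by $2$. Checking that the count of first-order factors evolves correctly in each case is notationally dense but entirely routine, and is precisely what H\"ormander's proof of Theorem 7.7.1 carries out in arbitrary dimension; restricting it to $n=2$ gives the lemma.
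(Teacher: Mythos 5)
Your argument is correct and takes the same route as the paper: the paper offers no proof of its own, simply citing H\"ormander's Theorem 7.7.1, and you are reproducing H\"ormander's proof (iterated non-stationary-phase integration by parts) specialized to two variables. One small bookkeeping imprecision is worth flagging: the invariant should say that $P_\alpha$ contains \emph{at least}, not exactly, $2m_\alpha-(2k-j_1-j_2)$ first-order factors $\partial_i f$. Indeed, when the fresh derivative $\partial_\ell$ produced by $L^t$ falls on an already-differentiated factor $\partial^\mu f$ with $|\mu|\geqslant 2$, the first-order count overshoots the target by one; the excess powers of $|f'|$ must then be absorbed into the constant $C$, which is legitimate because $\sup|f'|$ is controlled whenever $f$ stays in a bounded subset of $C^{k+1}(X)$, but your phrasing (bounding each first-order factor by $|f'|$ and keeping exactly the weight $|f'|^{-(2k-j_1-j_2)}$) does not account for this.
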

	
	\begin{lem} \label{lem: stationary phase}
		Let $K \subset \BR^2$ be a compact set and $X$ be an open neighbourhood of $K$. Let $u   \in C^{4}_c (K)$ and $f  \in C^{7} (X)$. Suppose that $f  $ is real valued. If $  f (x_{{\oldstylenums{0}}}    ) = 0$, $ f' (x_{{\oldstylenums{0}}}    ) = 0$, $\det f'' (x_{{\oldstylenums{0}}}    ) \neq 0$ and $ f' (x) \neq 0$ in $K \smallsetminus \{x_{{\oldstylenums{0}}}     \}$, then for $\lambdaup > 0$ we have
		\begin{equation*}
			\begin{split}
				\left| \hskip -1pt \int_{K} \hskip -3 pt  u(x) e (\lambdaup f (x)) \nd x  \hskip -1pt - \hskip -1pt \frac {  u(x_{{\oldstylenums{0}}}    ) e (\lambdaup f(x_{{\oldstylenums{0}}}    ) ) }  { \lambdaup \hskip -1pt \sqrt{- \det \hskip -1pt f'' (x_{{\oldstylenums{0}}}    )} }    \right|  \hskip -1pt  \leqslant  \hskip -1pt \frac {C} {\lambdaup^2} \bigg(  \hskip -1pt 1 + \frac {1} {  |\det f'' (x_{{\oldstylenums{0}}}    )  |^3 } \hskip -1pt \bigg)  \hskip -4pt \sum_{j_1+j_2 \leqslant 4  } \hskip -1pt  \sup  \big| \partial_{1}^{j_1} \partial_{2}^{j_2} u   \big| ,
			\end{split}
		\end{equation*}
		where   $C$ is bounded when $f (x) $ stays in a bounded set in $C^7 (X)$ and $|x-x_{{\oldstylenums{0}}}    | / |f'(x)|$ has a uniform bound.
		
	\end{lem}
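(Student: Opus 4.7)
The statement is a two-dimensional specialization of Theorem 7.7.5 in H\"ormander's book, so the plan is to reproduce its proof with enough control to see where the factor $1/|\det f''(x_{{\oldstylenums{0}}})|^3$ in the error comes from. First, I would choose a cutoff $\chi \in C_c^{\infty}(\BR^2)$ equal to $1$ near $x_{{\oldstylenums{0}}}$ and supported in a fixed small ball around it, and decompose the integral as
\begin{equation*}
\int_K u(x) e(\lambdaup f(x))\,\nd x = \int u(x)\chi(x) e(\lambdaup f(x))\,\nd x + \int u(x)(1-\chi(x)) e(\lambdaup f(x))\,\nd x.
\end{equation*}
Since $f'(x) \neq 0$ on $K \smallsetminus \{x_{{\oldstylenums{0}}}\}$, on the support of $u(1-\chi)$ we have $|f'(x)| \geqslant c > 0$ with $c$ depending only on the $C^{7}$-size of $f$ and the bound on $|x-x_{{\oldstylenums{0}}}|/|f'(x)|$; applying Lemma \ref{lem: partial integration} with $k=2$ bounds this piece by $O(\lambdaup^{-2})$, well within the claimed error.

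Next, for the localized integral near $x_{{\oldstylenums{0}}}$, I would Taylor-expand
\begin{equation*}
f(x) - f(x_{{\oldstylenums{0}}}) = \tfrac{1}{2} \langle f''(x_{{\oldstylenums{0}}})(x-x_{{\oldstylenums{0}}}), (x-x_{{\oldstylenums{0}}}) \rangle + r(x),
\end{equation*}
with $r(x) = O(|x-x_{{\oldstylenums{0}}}|^3)$, and then invoke H\"ormander's general oscillatory-integral identity (his Theorem 7.7.5), which expresses the integral as a finite asymptotic series in $\lambdaup^{-1}$ by applying $e^{i \langle D, Q^{-1} D\rangle/\lambdaup}$ to $u e(\lambdaup r)$ and evaluating at the stationary point. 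The leading term reproduces $u(x_{{\oldstylenums{0}}}) e(\lambdaup f(x_{{\oldstylenums{0}}}))/(\lambdaup \sqrt{-\det f''(x_{{\oldstylenums{0}}})})$, while the first correction is controlled by a sum of derivatives of $u$ of order $\leqslant 4$.

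The hard part, and the place where the statement's peculiar form genuinely bites, is tracking how the inverse Hessian $f''(x_{{\oldstylenums{0}}})^{-1}$ enters the remainder. Each differential operator $ \langle D, f''(x_{{\oldstylenums{0}}})^{-1} D\rangle$ applied to $u e(\lambdaup r)$ multiplies the integrand by a symbol whose $C^0$-norm involves $\|f''(x_{{\oldstylenums{0}}})^{-1}\|$; iterating three times (to gain the $\lambdaup^{-2}$ in the remainder beyond the leading $\lambdaup^{-1}$) produces the factor $|\det f''(x_{{\oldstylenums{0}}})|^{-3}$ in front of $\lambdaup^{-2}$. The uniformity statement about $C$ follows because all constants in H\"ormander's proof depend only on continuous seminorms of $f$ in $C^{7}$ and on the quantity $|x-x_{{\oldstylenums{0}}}|/|f'(x)|$, which is invoked precisely to rule out additional near-stationary points in the support of the cutoff.

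The remaining step is to combine the two pieces and collect error terms: the contribution from $(1-\chi)$ is absorbed into $\lambdaup^{-2}$ trivially, and the Taylor-expansion remainder near $x_{{\oldstylenums{0}}}$ is bounded by $\sum_{j_1+j_2 \leqslant 4} \sup |\partial_1^{j_1}\partial_2^{j_2} u|$ multiplied by the claimed prefactor, yielding the stated inequality.
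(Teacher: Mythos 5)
Your proposal is correct and follows essentially the same route as the paper, which simply cites H\"ormander's Theorem 7.7.5 with $k=2$ and then estimates the first correction operator $L_1 u$ to fold it into the $\lambdaup^{-2}$ error. One small imprecision in your accounting: the cube power $|\det f''(x_0)|^{-3}$ arises because H\"ormander's explicit formula for $L_1$ contains terms $\langle f''(x_0)^{-1}D, D\rangle^{v}$ with $v$ as large as $3$ (the constraints $v-\mu=1$ and $2v\geqslant 3\mu$ admit $\mu=2$, $v=3$), not from ``iterating three times to pass from $\lambdaup^{-1}$ to $\lambdaup^{-2}$.''
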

	
	\begin{proof}
		Apply  Theorem 7.7.5  in  \cite{Hormander} with $k=2$ and estimate $L_1 u$ according to its description therein. 
	\end{proof}

	\section{Basic analytic lemmas}\label{sec: analytic lemmas}

	In this section, we prove some simple analytic lemmas which will be used for analyzing the stationary point in \S \ref{sec: analysis of stationary point} and  also the phase functions in the $B$-processes of the two van der Corput methods in \S \S  \ref{sec: simple vdC}, \ref{sec: second vdC}. For simplicity, we shall not be concerned here the domains of functions, as long as they are  defined on compact subsets of $\BR$ or $\BR^2$.

	We start with   Fa\`a di Bruno's formula (see \cite{Faa-di-Bruno}) and its  two-dimensional generalization  in a less precise form. 
	
	\begin{lem}\label{lem: Bruno, 1}
		For smooth functions $ f (x) $  and $x (y)$ we have
		\begin{align*}
			\frac {\nd^j f (x(y))} {\nd y^{  j}} = j ! \sum f^{(k)} (x(y))  \prod_{  i  = 1 }^{ j } \frac {\big(x^{( i )} (y) / i ! \big)^{b_i} } { b_i ! }
		\end{align*}
		where the sum is over all different solutions in non-negative integers $b_1, ..., b_j$ of the equation $ \sum i b_i  = j$, and $k = \sum b_i $. 
	\end{lem}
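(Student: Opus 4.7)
The plan is to derive the identity by comparing Taylor coefficients of the composition $y \mapsto f(x(y))$ about a fixed base point. First, for $h$ in a small neighbourhood of $0$, I would write
\[
x(y+h) = x(y) + u(y, h), \qquad u(y, h) = \sum_{i=1}^{\infty} \frac{x^{(i)}(y)}{i!} h^i,
\]
and insert this into the Taylor expansion of $f$ about $x(y)$:
\[
f(x(y+h)) = \sum_{k=0}^{\infty} \frac{f^{(k)}(x(y))}{k!} u(y,h)^k.
\]
Raising $u(y, h)$ to the $k$-th power by the multinomial theorem, grouped according to the exponents $b_i \geqslant 0$ of the factors $x^{(i)}(y)/i!$, yields
\[
u(y,h)^k = \sum_{\substack{b_1, b_2, \ldots \geqslant 0 \\ b_1 + b_2 + \cdots = k}} \frac{k!}{\prod_{i \geqslant 1} b_i!} \prod_{i \geqslant 1} \left( \frac{x^{(i)}(y)}{i!} \right)^{b_i} h^{\sum_{i} i b_i}.
\]

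Next I would extract the coefficient of $h^j$ in the resulting double expansion of $f(x(y+h))$. For fixed $j$, only tuples satisfying $\sum_i i b_i = j$ contribute, and these are automatically supported on indices $1 \leqslant i \leqslant j$. The factor $k!$ from the multinomial cancels the $1/k!$ from the Taylor coefficient of $f$, and after eliminating $k$ via $k = \sum_i b_i$ the double sum collapses into a single sum indexed by tuples $(b_1, \ldots, b_j)$ of non-negative integers with $\sum_i i b_i = j$. Equating this coefficient with $(1/j!) \cdot \nd^j f(x(y)) / \nd y^j$, which is the Taylor coefficient of the smooth function $y \mapsto f(x(y+\,\cdot\,))$, and multiplying by $j!$, yields the stated formula with $k = \sum_i b_i$.

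The manipulation is entirely formal; the smoothness hypothesis of Lemma \ref{lem: Bruno, 1} is only used to guarantee the existence of Taylor polynomials of the relevant order, so no convergence issues arise. The main delicacy I anticipate is purely combinatorial bookkeeping --- checking that the indexing $\{(k, (b_i)) : \sum b_i = k, \sum i b_i = j\}$ collapses to $\{(b_i) : \sum i b_i = j\}$ with $k$ recovered tautologically. An alternative route is induction on $j$ using the Leibniz rule, tracking how an additional $\nd/\nd y$ acts on a monomial indexed by $(b_1, \ldots, b_j)$ and verifying the resulting recursion on the coefficients; this avoids formal power series but is more tedious.
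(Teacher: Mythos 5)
The paper does not give a proof of this lemma; it simply cites Johnson's survey \cite{Faa-di-Bruno} for the classical Fa\`a di Bruno formula. Your derivation via Taylor-coefficient extraction (compose the Taylor expansion of $x$ at $y$ into that of $f$ at $x(y)$, expand powers of $u(y,h)$ by the multinomial theorem, and read off the coefficient of $h^j$) is one of the two standard proofs and it is correct; in particular the bookkeeping is right: for fixed $j$ only tuples with $\sum_i ib_i = j$ survive, automatically $b_i=0$ for $i>j$, the $k!$ cancels the $1/k!$ from the outer Taylor series, and $k=\sum_i b_i$ is recovered tautologically. The only point worth tightening is your remark that ``no convergence issues arise'': working with infinite Taylor series of non-analytic smooth functions requires a word of justification. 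The cleanest fix is to note that for a given $j$ only the Taylor polynomials of $f$ and $x$ of order $\leqslant j$ contribute to the coefficient of $h^j$, with the remainders contributing only $o(h^j)$, so one may truncate both expansions at order $j$; equivalently, both sides are polynomial identities in the finitely many quantities $f^{(k)}(x(y))$ and $x^{(i)}(y)$ with $1\leqslant i,k\leqslant j$, so it suffices to check the identity for polynomial $f$ and $x$, where all series are finite. With that caveat spelled out your argument is complete.
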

	
	\begin{lem}\label{lem: Bruno}
		For a smooth composite function $ f  (x_1 (y_1, y_2) , x_2 (y_1, y_2)  )$, its derivative $ (\partial/\partial y_1)^{j_1} (\partial/\partial y_2)^{j_2} f  (x_1 (y_1, y_2) , x_2 (y_1, y_2)  )  $, with $j_1+j_2 > 0$, is a linear combination of 
		$$ 
		\partial_{1}^{\shskip k_1}  \partial_{2}^{\shskip k_2} f  (x_1 (y_1, y_2) , x_2 (y_1, y_2) ) \prod_{  m   = 1 }^{k_1} \partial_1^{i_{ 1 m    } } \partial_2^{i_{2 m  } } x_1 (y_1, y_2)
		\prod_{  n = 1 }^{k_2} \partial_1^{j_{1 n} } \partial_2^{j_{2 n} } x_2 (y_1, y_2)   , 
		$$ 
		for $\sum  i_{1 m     } + \sum  j_{1 n} = j_1$ and  $  \sum  i_{2 m } + \sum  j_{2 n} = j_2$, with $i_{1m} + i_{2m}, j_{1m} + j_{2m} > 0$.  For the  two terms with $ (k_1, k_2 ) =  (0, 1) , (1, 0)$,  the coefficients are equal to $1$. For the term 
		with $ (k_1, k_2) = (j_1, j_2) $, $i_{1m} = j_{2n}= 1$, and $i_{2m} = j_{1n} = 0$,  
		the coefficient is also equal to $1$.
	\end{lem}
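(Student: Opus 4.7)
The plan is to prove Lemma \ref{lem: Bruno} by induction on $n = j_1 + j_2 \geq 1$, using repeated application of the chain and product rules.

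For the base case $n = 1$, the chain rule gives directly
\[
\frac{\partial}{\partial y_i} f(x_1(y_1,y_2), x_2(y_1,y_2)) = (\partial_1 f) \cdot \frac{\partial x_1}{\partial y_i} + (\partial_2 f) \cdot \frac{\partial x_2}{\partial y_i}, \quad i=1,2,
\]
which is of the claimed form, with the singled-out $(k_1,k_2)=(1,0)$ and $(0,1)$ terms each carrying coefficient $1$; the ``top'' term, which coincides with one of these when $n=1$, likewise carries coefficient $1$. For the inductive step, assume the claim for total order $n$, let $T$ be a typical summand of the inductive expression, and apply $\partial/\partial y_1$ (the case of $\partial/\partial y_2$ is symmetric). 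By the product rule, $\partial_{y_1} T$ splits into (i) the derivative on the factor $\partial_1^{k_1}\partial_2^{k_2} f$, which by the chain rule yields
\[
\partial_1^{k_1+1}\partial_2^{k_2} f \cdot \frac{\partial x_1}{\partial y_1} \;+\; \partial_1^{k_1}\partial_2^{k_2+1} f \cdot \frac{\partial x_2}{\partial y_1},
\]
times the remaining derivative factors; or (ii) the derivative acting on exactly one of the existing $\partial_1^{i_{1m}}\partial_2^{i_{2m}} x_1$ or $\partial_1^{j_{1n}}\partial_2^{j_{2n}} x_2$ factors, raising the first superscript by one. In either case the resulting summand is again of the claimed form, with updated counts $\sum i_{1m}+\sum j_{1n} = j_1+1$, $\sum i_{2m}+\sum j_{2n} = j_2$, and all positivity constraints preserved. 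This shows the structural part.

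For the three specific coefficient assertions I would track which branches of the induction can produce each distinguished term. The $(k_1,k_2)=(1,0)$ term contains the single factor $\partial_1^{j_1}\partial_2^{j_2} x_1$ and only the $f$-derivative $\partial_1 f$; hence it can arise only from case (ii) applied to the $(1,0)$-term of the preceding step (case (i) would raise $k_1$ or $k_2$ above $1$ or create a $k_2 = 1$ block). The action is unique and multiplies by $1$, so the coefficient $1$ is preserved by induction. The $(0,1)$ case is symmetric. For the top term with $(k_1,k_2)=(j_1,j_2)$, $i_{1m}=j_{2n}=1$, and $i_{2m}=j_{1n}=0$, all $x_1$-factors are $\partial x_1/\partial y_1$ and all $x_2$-factors are $\partial x_2/\partial y_2$; the only branch producing a fresh $\partial x_1/\partial y_1$ factor (as required to reach $k_1 = j_1$) is the $(\partial_1^{k_1+1}\partial_2^{k_2} f)\cdot (\partial x_1/\partial y_1)$ branch of case (i), applied to the corresponding top term at level $n-1$ with indices $(j_1-1, j_2)$. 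By the inductive hypothesis that previous coefficient is $1$, and the branch contributes multiplicity $1$, so the new coefficient is also $1$.

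The main obstacle is the bookkeeping, specifically ruling out every other branch that might contribute to the three distinguished terms. This is handled by the strict exponent constraints in the lemma statement: any branch of case (i) forces $k_1+k_2$ to increase, while any branch of case (ii) acting on a factor of order $\geq 2$ in the $(i_{1m}, i_{2m})$ (or $(j_{1n}, j_{2n})$) produces a factor of order $\geq 2$, destroying the characterization of the top term. No claim is made, nor needed, on the coefficients of the remaining terms in the linear combination.
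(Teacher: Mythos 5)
The paper states Lemma \ref{lem: Bruno} without proof; it is presented as the two-dimensional generalization of Fa\`a di Bruno's formula and left as a known fact (with the citation going only to the classical one-dimensional version). There is therefore no argument in the paper to compare yours against. Your induction on $j_1 + j_2$ is the natural one and is correct: the chain rule and product rule give the structural part, and the coefficient claims are correctly isolated by observing that case (i) always strictly increases $k_1 + k_2$ while case (ii) preserves $(k_1,k_2)$ but raises the order of an existing factor, so each distinguished term can be reached from exactly one level-below term through exactly one branch. The counting argument that rules out case (ii) for the top term --- the order budget $\sum i_{1m} + \sum i_{2m} + \sum j_{1n} + \sum j_{2n} = j_1 + j_2$ cannot accommodate $k_1 + k_2 = j_1 + j_2 + 1$ factors each of order $\geq 1$ --- is the key observation and you have it, though stated somewhat obliquely (``destroying the characterization of the top term''). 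One harmless slip: in the top-term paragraph you write ``level $n-1$'' where the surrounding inductive setup makes clear you mean the level immediately below the current one. In summary, your proof supplies the details the paper omits and is sound.
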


	
	Firstly, we have a simple result by Fa\`a di Bruno's formula as follows. 
	
	\begin{lem}\label{lem: 1-dim}
		Let $g (y), \delta (y) $ be smooth functions, with $ \delta^{(j)} (y) \Lt_j \delta $. Then the function
		\begin{align}\label{4eq: rho = , dim 1}
			\rho (y) = g (y + \delta (y) )  - g (y)  
		\end{align}
		has bounds $\rho^{(j)} (y) \Lt_{j, \shskip g} \delta $. 
	\end{lem}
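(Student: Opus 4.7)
The plan is to extract the factor $\delta(y)$ explicitly from $\rho(y)$ and then control the remaining factor by a Leibniz plus Fa\`a di Bruno argument. First, applying the fundamental theorem of calculus to the map $s\mapsto g(y+s\delta(y))$ on $[0,1]$, I would rewrite
\[ \rho(y) \, = \, \delta(y)\cdot h(y), \qquad h(y) \, := \, \int_0^1 g'(y+t\delta(y))\, \nd t. \]
The required factor of $\delta(y)$ is now displayed outside a smooth function $h$, so the whole lemma reduces to proving $h^{(l)}(y) \Lt_{l,g} 1$ for every $l\geqslant 0$.

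Granted that estimate, Leibniz's rule gives
\[ \rho^{(j)}(y) \, = \, \sum_{k=0}^{j} \binom{j}{k}\, \delta^{(k)}(y)\, h^{(j-k)}(y), \]
and since the hypothesis yields $|\delta^{(k)}(y)| \Lt_k \delta$, every summand is then $\Lt_{j,g}\delta$, which is the conclusion of the lemma.

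To bound $h^{(l)}(y)$, I would differentiate under the integral sign and reduce to a uniform-in-$t$ estimate for $(\nd/\nd y)^l g'(y+t\delta(y))$. Setting $x(y) = y+t\delta(y)$, so that $x'(y) = 1+t\delta'(y)$ and $x^{(i)}(y) = t\delta^{(i)}(y)$ for $i\geqslant 2$, Fa\`a di Bruno's formula (Lemma \ref{lem: Bruno, 1}) expresses this derivative as a finite linear combination of terms of the form $g^{(k+1)}(x(y))\prod_i (x^{(i)}(y)/i!)^{b_i}/b_i!$ with $\sum_i i b_i = l$ and $k=\sum_i b_i$. Because $|\delta^{(i)}(y)| \Lt_i \delta$ and $\delta$ is bounded on the compact set where $g$ is smooth, each $x^{(i)}(y)$ is $O(1)$ uniformly in $t\in[0,1]$, while each factor $g^{(k+1)}(x(y))$ is bounded by a constant depending on $g$. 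Hence every summand is $O_{l,g}(1)$, giving the required bound on $h^{(l)}$.

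There is no real obstacle here: the argument is routine once the integral representation is chosen, and only the standard combinatorics of Lemma \ref{lem: Bruno, 1} is invoked. The sole delicate point is to observe that the hypothesis $|\delta^{(i)}| \Lt_i \delta$, together with smoothness of $g$ on a fixed compact set, forces all intermediate quantities to be of size $O(1)$ before the outer factor of $\delta$ is reinstated.
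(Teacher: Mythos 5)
Your proof is correct, but it takes a somewhat different route from the paper's. The paper differentiates $\rho(y)$ directly $j$ times, applies Fa\`a di Bruno to $(\nd/\nd y)^j g(y+\delta(y))$, and singles out the main term $g^{(j)}(y+\delta(y))(1+\delta'(y))^j$; all other terms of the Fa\`a di Bruno expansion carry at least one factor $\delta^{(i)}(y)$ with $i\geqslant 2$ and are thus $O_j(\delta)$. After expanding $(1+\delta'(y))^j=1+O_j(\delta)$ the paper reduces to $g^{(j)}(y+\delta(y))-g^{(j)}(y)$ and finishes with the mean value theorem. You instead factor $\rho(y)=\delta(y)\,h(y)$ from the outset, using the integral form of the Taylor remainder $h(y)=\int_0^1 g'(y+t\delta(y))\,\nd t$, then apply Leibniz and reduce to $h^{(l)}(y)\Lt_{l,g}1$, which you get from Fa\`a di Bruno applied to $(\nd/\nd y)^l g'(y+t\delta(y))$, uniformly in $t$. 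Both use Fa\`a di Bruno as the combinatorial engine, but your decomposition displays the $\delta$-factor before any differentiation, so the mean value theorem never has to be invoked and the final Leibniz step is purely mechanical. The paper's route is shorter to write down; yours is slightly more systematic and more obviously extends to higher-order remainders (as in Lemma~\ref{lem: 1-dim, 2}) by pulling out more terms of the Taylor expansion. Either argument is fine.
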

	
	\begin{proof}
		Take the $j$-th derivative on \eqref{4eq: rho = , dim 1} and expand $ (\nd/\nd y)^j g  (y + \delta (y) ) $ by  Fa\`a di Bruno's formula as in Lemma \ref{lem: Bruno, 1}. In view of the bounds for $\delta^{(j)} (y)$, we infer that 
		\begin{align*}
			\rho^{(j)} (y) & = (1+ \delta' (y))^j \cdot g^{(j)} (y + \delta (y) )   - g^{(j)} (y) + O_{j} (\delta) \\
			& = g^{(j)} (y + \delta (y) )   - g^{(j)} (y) + O_{j} (\delta) , 
		\end{align*}
		and  the bound $ \rho^{(j)} (y) \Lt_j \delta $  is clear from  the mean value theorem. 
	\end{proof}
	
	Lemma \ref{lem: 1-dim} may be generalized if  Taylor's theorem is used instead of the mean value theorem. 
	
	\begin{lem}\label{lem: 1-dim, 2}
		Let $g (y), \delta (y) $ be smooth functions, with $ \delta^{(j)} (y) \Lt_j \delta $. Then the function 
		\begin{align}\label{4eq: rho = , dim 1, 2}
			&	\rho_2 (y) = g (y + \delta (y) )  - g (y)  - g' (y) \delta (y) 
		\end{align}
		has bounds $\rho_2^{(j)} (y) \Lt_{j, \shskip g} \delta^2 $. 
	\end{lem}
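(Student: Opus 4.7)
The plan is to follow the author's own hint: replace the mean-value argument used in Lemma \ref{lem: 1-dim} with the integral form of Taylor's theorem to second order. Concretely, I would first record the exact identity
\begin{align*}
\rho_2(y) = g(y+\delta(y)) - g(y) - g'(y)\delta(y) = \delta(y)^2 \int_0^1 (1-s)\, g''\bigl(y + s\delta(y)\bigr)\, \nd s,
\end{align*}
which follows from the integral form of the Taylor remainder applied to $g$ at the base point $y$, or equivalently by integration by parts in $\int_0^{\delta(y)}(\delta(y)-t)\, g''(y+t)\, \nd t$. The case $j=0$ is then immediate, since the integrand is $O_g(1)$ and the prefactor contributes $\delta(y)^2 \Lt \delta^2$.

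For general $j$, I would differentiate under the integral sign. Leibniz's rule yields
\begin{align*}
\rho_2^{(j)}(y) = \sum_{i=0}^{j} \binom{j}{i}\, \frac{\nd^i}{\nd y^i}\bigl(\delta(y)^2\bigr) \int_0^1 (1-s)\, \frac{\nd^{j-i}}{\nd y^{j-i}} g''\bigl(y + s\delta(y)\bigr)\, \nd s.
\end{align*}
A second application of Leibniz gives $(\nd^i/\nd y^i)(\delta(y)^2) = \sum_{k=0}^i \binom{i}{k} \delta^{(k)}(y)\delta^{(i-k)}(y) \Lt_i \delta^2$ via the hypothesis $\delta^{(m)}(y) \Lt_m \delta$. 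For the inner derivative I would invoke Fa\`a di Bruno's formula (Lemma \ref{lem: Bruno, 1}) for the composite $g''(y+s\delta(y))$: it expresses the $(j-i)$-th derivative as a polynomial in the $g^{(\ell)}(y+s\delta(y))$ and in the derivatives $1+s\delta'(y), s\delta''(y), \ldots$; since $|s|\leq 1$ and the derivatives of $g$ and $\delta$ are bounded, this quantity is $O_{j,g}(1)$ uniformly in $s\in[0,1]$.

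Summing the resulting bounds over $i$ produces $\rho_2^{(j)}(y) \Lt_{j,g} \delta^2$, as required. The only substantive step beyond Lemma \ref{lem: 1-dim} is the move from the first-order mean-value remainder to the second-order integral Taylor remainder, which automatically generates the extra factor of $\delta(y)$; once this representation is in hand, the derivative bookkeeping is routine via Leibniz and Fa\`a di Bruno, and I do not foresee any genuine obstacle.
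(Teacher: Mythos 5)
Your proof is correct and realizes the paper's one-line hint (``use Taylor's theorem instead of the mean value theorem''), but it does so via a cleaner technical route than a line-by-line adaptation of the proof of Lemma~\ref{lem: 1-dim} would give. If one mimicked that proof verbatim---differentiate $\rho_2$ to order $j$, expand $(\nd/\nd y)^j g(y+\delta(y))$ by Fa\`a di Bruno, and only then apply Taylor---one would have to check explicitly that the $O(\delta)$-size terms in the Fa\`a di Bruno expansion cancel against the Leibniz expansion of $(g'(y)\delta(y))^{(j)}$; this works because the Fa\`a di Bruno term with a single $b_i=1$, $i\geqslant 2$, carries coefficient exactly $\binom{j}{i}$, matching Leibniz, but it is a small combinatorial verification one must actually make. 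Your representation $\rho_2(y)=\delta(y)^2\int_0^1(1-s)\,g''\bigl(y+s\delta(y)\bigr)\,\nd s$ packages that cancellation once and for all before any differentiation takes place, after which the Leibniz and Fa\`a di Bruno bookkeeping really is routine in the way you describe (with $(y+s\delta(y))'=1+s\delta'(y)$ and $(y+s\delta(y))^{(i)}=s\delta^{(i)}(y)$ bounded uniformly for $s\in[0,1]$). The only implicit assumption you share with the paper is that $y+s\delta(y)$ stays in the domain of $g$, which the paper has already waved away at the start of \S\ref{sec: analytic lemmas}; so there is no gap.
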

	
	Moreover, Lemma \ref{lem: 1-dim}  has a  two-dimensional analogue.
	
	\begin{lem}\label{lem: 2-dim}
		Let $g (y_1, y_2)$, $\delta_1 (y_1, y_2) $, and  $\delta_2 (y_1, y_2) $ be smooth functions, with $\partial_1^{j_1} \partial_{2}^{j_2} \allowbreak \delta_1 (y_1, y_2), \, \delta_2 (y_1, y_2) \Lt_{j_1, \shskip j_2} \delta $. Then the function 
		\begin{align}\label{4eq: rho = , dim 2}
			\rho (y_1, y_2) = g (y_1 + \delta_1 (y_1, y_2), y_2 + \delta_2 (y_1, y_2) )  - g (y_1, y_2)  
		\end{align}
		has bounds $\partial_1^{j_1} \partial_{2}^{j_2} \rho (y_1, y_2) \Lt_{j_1, \shskip j_2, \shskip g} \delta $. 
	\end{lem}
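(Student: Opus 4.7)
The plan is to mimic the one-dimensional argument of Lemma \ref{lem: 1-dim}, with Fa\`a di Bruno's formula replaced by its two-dimensional version, Lemma \ref{lem: Bruno}. Set $x_1 (y_1, y_2) = y_1 + \delta_1 (y_1, y_2)$ and $x_2 (y_1, y_2) = y_2 + \delta_2 (y_1, y_2)$, so that $\rho = g (x_1, x_2) - g (y_1, y_2)$. I first dispose of the case $j_1 = j_2 = 0$ by applying the mean value theorem along the straight segment joining $(y_1, y_2)$ and $(x_1, x_2)$; since $|x_i - y_i| = |\delta_i| \Lt \delta$, this yields $|\rho| \Lt_g \delta$ immediately.

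For $j_1 + j_2 > 0$, I would expand $\partial_1^{j_1} \partial_2^{j_2} g (x_1 (y_1, y_2), x_2 (y_1, y_2))$ by Lemma \ref{lem: Bruno}. The hypotheses on the $\delta_i$ give
\[ \partial_1 x_1 = 1 + \partial_1 \delta_1 = 1 + O (\delta), \qquad \partial_2 x_2 = 1 + \partial_2 \delta_2 = 1 + O (\delta), \]
while every other partial derivative $\partial_1^{a} \partial_2^{b} x_1$ and $\partial_1^{a} \partial_2^{b} x_2$ with $a + b \geqslant 1$ is $O (\delta)$. Consequently, any term in the two-dimensional Fa\`a di Bruno expansion other than the distinguished coefficient-$1$ term singled out in the last sentence of Lemma \ref{lem: Bruno} must contain at least one such $O (\delta)$ factor, while the remaining factors (including the bounded derivative of $g$) are $O (1)$; the aggregate contribution of these terms is therefore $O_{j_1, j_2, g} (\delta)$.

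The distinguished term itself equals
\[ \partial_1^{j_1} \partial_2^{j_2} g (x_1, x_2) \cdot (1 + \partial_1 \delta_1)^{j_1} (1 + \partial_2 \delta_2)^{j_2}, \]
which on binomial expansion becomes $\partial_1^{j_1} \partial_2^{j_2} g (x_1, x_2) + O_{j_1, j_2, g} (\delta)$. Subtracting the obvious contribution $\partial_1^{j_1} \partial_2^{j_2} g (y_1, y_2)$ coming from the second summand of $\rho$, we arrive at
\[ \partial_1^{j_1} \partial_2^{j_2} \rho = \partial_1^{j_1} \partial_2^{j_2} g (x_1, x_2) - \partial_1^{j_1} \partial_2^{j_2} g (y_1, y_2) + O_{j_1, j_2, g} (\delta), \]
and a further application of the mean value theorem to this difference completes the bound. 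The only obstacle is purely combinatorial: correctly isolating the single distinguished main term in the Fa\`a di Bruno expansion and checking that every other summand carries at least one small factor. Since this is exactly what the last sentence of Lemma \ref{lem: Bruno} encodes, no genuine difficulty arises.
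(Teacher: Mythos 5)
Your proposal is correct and is precisely the adaptation of the one-dimensional argument (Lemma \ref{lem: 1-dim}) that the paper leaves to the reader: the paper states Lemma \ref{lem: 2-dim} without proof, remarking only that Lemma \ref{lem: 1-dim} ``has a two-dimensional analogue.'' You correctly identify that, among the factors $\partial_1^{i_1}\partial_2^{i_2}x_1$ and $\partial_1^{i_1}\partial_2^{i_2}x_2$ appearing in the two-dimensional Fa\`a di Bruno expansion, only $\partial_1 x_1 = 1 + O(\delta)$ and $\partial_2 x_2 = 1 + O(\delta)$ fail to be $O(\delta)$; hence the unique term carrying no small factor is the distinguished coefficient-$1$ term singled out in Lemma \ref{lem: Bruno}, and the constraint $\sum i_{1m} + \sum j_{1n} = j_1$, $\sum i_{2m} + \sum j_{2n} = j_2$ forces $(k_1,k_2)=(j_1,j_2)$ for that term. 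Peeling off that main term and finishing with the mean value theorem, exactly as in the proof of Lemma \ref{lem: 1-dim}, gives the bound. No gap.
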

	
	Our problem is to solve equations of the form:
	\begin{equation}\label{4eq: equations} 
		f_1 (x_1) = y_1 +   \delta_1 ( x_1, x_2), \qquad   f_2 (x_2) = y_2 + \delta_2 ( x_1, x_2) , 
	\end{equation}
	where $f_1 (x)$, $f_2 (x)$, $ \delta_1 ( x_1, x_2)$, and $\delta_2 ( x_1, x_2)$ are smooth functions satisfying 
	\begin{align}\label{4eq: bounds for f' }
		f_1' (x), \, f_2'(x) \asymp 1,  
	\end{align} 
	\begin{align}\label{4eq: bounds for f}  
		f_1^{(j)} (x), \, f_2^{(j)}(x) \Lt_{j} 1, 
	\end{align}
	and
	\begin{align}\label{4eq: bounds for delta}
		\partial_{1}^{j_1} \partial_{2}^{j_2} \delta_1  (x_1, x_2 )  \Lt_{j_1, \shskip j_2} \delta_1 , \qquad   \partial_{1}^{j_1} \partial_{2}^{j_2} \delta_2   (x_1, x_2 )  \Lt_{j_1, \shskip j_2} \delta_2.  
	\end{align}
	Let $ \delta_1 , \delta_2 \Lt 1 $ be very small compared to the implicit constants in \eqref{4eq: bounds for f' }. Let $x_{{\oldstylenums{0}}1} (y)$ and $x_{{\oldstylenums{0}}2} (y)$ be the inverse of $f_1 (x)$ and  $f_2 (x)$ respectively.  There is a unique solution of \eqref{4eq: equations} which may be written in the form: 
	\begin{align}\label{3eq: defn of x0}
		x_{{\oldstylenums{0}}1}     (y_1, y_2) = x_{{\oldstylenums{0}}1}     (y_1) + \rho_1 (y_1, y_2), \quad x_{{\oldstylenums{0}}2}     (y_1, y_2) = x_{{\oldstylenums{0}}2}     (y_2) + \rho_2  (y_1, y_2).  
	\end{align}
	Note that the uniqueness is obvious because either $f_1 (x_1) -  \delta_1 (x_1, x_2)$ or  $f_2 (x_2) -   \delta_2 (x_1, x_2)$ is monotonic  along any given direction, and that   $\rho_1 (y_1, y_2)$ and $\rho_2  (y_1, y_2)$ are smooth by the implicit function theorem.  Moreover, observe that the solution would simply be $(x_{{\oldstylenums{0}}1}     (y_1), x_{{\oldstylenums{0}}2}     (y_2))$ when $\delta_1  ( x_1, x_2) = \delta_2  ( x_1, x_2)  \equiv 0$. 
	
	\begin{lem}\label{lem: rho small}
		We have 
		\begin{align}\label{3eq: small rho}
			\partial_{1}^{j_1} \partial_{2}^{j_2} \rho_1   (y_1, y_2 ) \Lt \delta_1 , \qquad  \partial_{1}^{j_1} \partial_{2}^{j_2} \rho_2  (y_1, y_2 )  \Lt_{j_1, \shskip j_2}  \delta_2 . 
		\end{align}
	\end{lem}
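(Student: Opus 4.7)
The map $(x_1,x_2)\mapsto(f_1(x_1)-\delta_1(x_1,x_2),\, f_2(x_2)-\delta_2(x_1,x_2))$ has Jacobian which, by \eqref{4eq: bounds for f' } and the smallness of $\delta_1,\delta_2$, is a perturbation of $\mathrm{diag}(f_1',f_2')$ and hence invertible; the implicit function theorem produces the smooth solution $(\tilde x_1,\tilde x_2)=(x_{{\oldstylenums{0}}1}(y_1)+\rho_1,\, x_{{\oldstylenums{0}}2}(y_2)+\rho_2)$ already introduced in \eqref{3eq: defn of x0}.  I would prove \eqref{3eq: small rho} by induction on $j=j_1+j_2$.

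For the base case $j=0$, subtract the identity $f_1(x_{{\oldstylenums{0}}1}(y_1))=y_1$ from the first equation in \eqref{4eq: equations} to obtain
\[
f_1(\tilde x_1)-f_1(x_{{\oldstylenums{0}}1}(y_1))=\delta_1(\tilde x_1,\tilde x_2),
\]
so that $|\rho_1|\Lt\delta_1$ by the mean value theorem together with \eqref{4eq: bounds for f' } and \eqref{4eq: bounds for delta}; the bound $|\rho_2|\Lt\delta_2$ is symmetric.

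For the inductive step, apply $\partial_1^{j_1}\partial_2^{j_2}$ to each equation of \eqref{4eq: equations} and invoke Lemma \ref{lem: Bruno}.  On the left, the Faà di Bruno term with $(k_1,k_2)=(1,0)$ contributes $f_1'(\tilde x_1)\,\partial_1^{j_1}\partial_2^{j_2}\tilde x_1$; on the right, the two terms with $(k_1,k_2)\in\{(1,0),(0,1)\}$ contribute $\partial_1\delta_1\cdot\partial_1^{j_1}\partial_2^{j_2}\tilde x_1+\partial_2\delta_1\cdot\partial_1^{j_1}\partial_2^{j_2}\tilde x_2$.  After substituting $\tilde x_i=x_{{\oldstylenums{0}}i}+\rho_i$ into every lower-order factor and observing that the ``pure $x_{{\oldstylenums{0}}1}$'' subsum of the Faà di Bruno expansion of $\partial_1^{j_1}\partial_2^{j_2}f_1(\tilde x_1)$ agrees, once each $f_1^{(k)}(\tilde x_1)$ is replaced by $f_1^{(k)}(x_{{\oldstylenums{0}}1}(y_1))$, with $\partial_1^{j_1}\partial_2^{j_2}f_1(x_{{\oldstylenums{0}}1}(y_1))=\partial_1^{j_1}\partial_2^{j_2}y_1$, the remainder splits into (i) differences $[f_1^{(k)}(\tilde x_1)-f_1^{(k)}(x_{{\oldstylenums{0}}1}(y_1))]$ multiplied by bounded Faà di Bruno factors, controlled by $\rho_1\Lt\delta_1$ through the MVT and \eqref{4eq: bounds for f}, and (ii) ``mixed'' terms containing at least one factor $\partial^{\beta}\rho_1$ with $|\beta|<j$, bounded by the induction hypothesis.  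Both types are $O(\delta_1)$, and the analogous expansion of $\partial_1^{j_1}\partial_2^{j_2}\delta_1(\tilde x_1,\tilde x_2)$ shows that everything apart from the two top-order terms isolated above is also $O(\delta_1)$, since every Faà di Bruno summand carries a derivative of $\delta_1$.

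Collecting terms from both equations then yields the $2\times 2$ linear system
\[
\begin{pmatrix} f_1'(\tilde x_1)-\partial_1\delta_1 & -\partial_2\delta_1 \\ -\partial_1\delta_2 & f_2'(\tilde x_2)-\partial_2\delta_2 \end{pmatrix}\begin{pmatrix} \partial_1^{j_1}\partial_2^{j_2}\rho_1 \\ \partial_1^{j_1}\partial_2^{j_2}\rho_2 \end{pmatrix}=\begin{pmatrix} O(\delta_1) \\ O(\delta_2) \end{pmatrix}.
\]
By \eqref{4eq: bounds for f' } and the smallness of $\delta_1,\delta_2$, the coefficient matrix is a bounded perturbation of $\mathrm{diag}(f_1',f_2')$; its inverse has $O(1)$ diagonal and $O(\delta_1),O(\delta_2)$ off-diagonal entries, so solving gives $\partial_1^{j_1}\partial_2^{j_2}\rho_1\Lt\delta_1$ and $\partial_1^{j_1}\partial_2^{j_2}\rho_2\Lt\delta_2$, closing the induction.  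The only real obstacle is the bookkeeping needed to keep $\rho_i$ controlled by $\delta_i$ individually rather than by $\delta_1+\delta_2$; this uses crucially that the off-diagonal entries of the coefficient matrix are themselves of size $\delta_i$, so that cross-coupling through the linear system does not mix $\delta_1$ into the bound for $\rho_2$ or vice versa.
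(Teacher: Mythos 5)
Your proof is correct, and it takes a genuinely different route from the paper's. The paper first establishes the weaker bound $\partial_1^{j_1}\partial_2^{j_2}x_{{\oldstylenums{0}}1},\,\partial_1^{j_1}\partial_2^{j_2}x_{{\oldstylenums{0}}2}\Lt 1$ (Lemma \ref{lem: x0 < 1}) by the $2\times2$-system argument, feeds that into \eqref{4eq: delta(v01, v02) small}, and only then proves Lemma \ref{lem: rho small} by a case split: for $j_2>0$ the right-hand side of \eqref{4eq: equation 1} vanishes identically, so a single equation already isolates $\partial_1^{j_1}\partial_2^{j_2}x_{{\oldstylenums{0}}1}=\partial_1^{j_1}\partial_2^{j_2}\rho_1$ without ever solving a coupled system; for $j_2=0$ the paper rewrites $\rho_1$ in the one-variable form \eqref{5eq: rho = x0 - x0} and invokes Lemma \ref{lem: 1-dim}. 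This decoupling is structurally cleaner and sidesteps the bookkeeping that occupies the second half of your argument. Your approach instead runs a single induction and solves the genuine $2\times 2$ system at each order; the key extra observation you supply — that the off-diagonal entries of the coefficient matrix, and hence of its inverse, are $O(\delta_1)$ and $O(\delta_2)$ respectively rather than merely $O(1)$, so cross-coupling contributes only $O(\delta_1\delta_2)$ — is exactly what is needed to keep the two bounds separate, and it is correct. A small bonus of your route is that the $O(1)$ estimate of Lemma \ref{lem: x0 < 1} is subsumed (it follows from $\partial^\alpha\rho_i\Lt\delta_i\Lt 1$ plus smoothness of the one-variable inverses), whereas the paper needs it as a separate preliminary; on the other hand, the paper's case split makes the use of Faà di Bruno (Lemma \ref{lem: Bruno}) considerably lighter.
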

	
	We first prove a weaker but useful result: 
	
	\begin{lem}\label{lem: x0 < 1}
		We have 
		\begin{align}\label{4eq: v01, v02 bounded}
			\partial_{1}^{j_1} \partial_{2}^{j_2} 	x_{{\oldstylenums{0}}1}     (y_1, y_2), \, \partial_{1}^{j_1} \partial_{2}^{j_2} 	x_{{\oldstylenums{0}}2 } (y_1, y_2) \Lt_{j_1, \shskip j_2}  1.  
		\end{align} 
	\end{lem}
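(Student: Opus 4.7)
The plan is to prove \eqref{4eq: v01, v02 bounded} by induction on the total order $j = j_1 + j_2$, using the defining equations \eqref{4eq: equations} directly rather than the decomposition \eqref{3eq: defn of x0}. The base case $j = 0$ is immediate: since $y_1, y_2$ range over a compact set and $f_1, f_2$ are monotonic with $f_1', f_2' \asymp 1$, the values $x_{{\oldstylenums{0}}1}(y_1, y_2)$ and $x_{{\oldstylenums{0}}2}(y_1, y_2)$ stay in a compact set.

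For the inductive step, I would differentiate the two equations
\begin{equation*}
f_1(x_{{\oldstylenums{0}}1}) - \delta_1(x_{{\oldstylenums{0}}1}, x_{{\oldstylenums{0}}2}) = y_1, \qquad f_2(x_{{\oldstylenums{0}}2}) - \delta_2(x_{{\oldstylenums{0}}1}, x_{{\oldstylenums{0}}2}) = y_2,
\end{equation*}
with respect to $y_1, y_2$ a total of $j$ times. Applying the two-dimensional Fa\`a di Bruno formula in Lemma \ref{lem: Bruno} to each composite, I would isolate the terms of top order in the unknowns. By the coefficient statement in Lemma \ref{lem: Bruno}, the contribution of $\partial_1^{j_1} \partial_2^{j_2} x_{{\oldstylenums{0}}i}$ to the derivative of $f_i(x_{{\oldstylenums{0}}i})$ is exactly $f_i'(x_{{\oldstylenums{0}}i})\, \partial_1^{j_1} \partial_2^{j_2} x_{{\oldstylenums{0}}i}$, while its contribution to the derivative of $\delta_i(x_{{\oldstylenums{0}}1}, x_{{\oldstylenums{0}}2})$ is $\partial_1 \delta_i \cdot \partial_1^{j_1} \partial_2^{j_2} x_{{\oldstylenums{0}}1} + \partial_2 \delta_i \cdot \partial_1^{j_1} \partial_2^{j_2} x_{{\oldstylenums{0}}2}$. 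All remaining terms are polynomials in the derivatives $\partial_1^{k_1} \partial_2^{k_2} x_{{\oldstylenums{0}}i}$ of strictly lower total order $k_1 + k_2 < j$, multiplied by derivatives of $f_i$ and $\delta_i$, and are therefore $O_j(1)$ by the inductive hypothesis together with \eqref{4eq: bounds for f}–\eqref{4eq: bounds for delta}.

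This yields a $2 \times 2$ linear system
\begin{equation*}
\begin{pmatrix} f_1'(x_{{\oldstylenums{0}}1}) - \partial_1 \delta_1 & -\partial_2 \delta_1 \\ -\partial_1 \delta_2 & f_2'(x_{{\oldstylenums{0}}2}) - \partial_2 \delta_2 \end{pmatrix} \begin{pmatrix} \partial_1^{j_1}\partial_2^{j_2} x_{{\oldstylenums{0}}1} \\ \partial_1^{j_1}\partial_2^{j_2} x_{{\oldstylenums{0}}2} \end{pmatrix} = \begin{pmatrix} R_1 \\ R_2 \end{pmatrix},
\end{equation*}
with $R_1, R_2 = O_j(1)$. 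Since $\delta_1, \delta_2$ are sufficiently small relative to the implied constants in $f_i' \asymp 1$, the coefficient matrix is a small perturbation of $\mathrm{diag}(f_1', f_2')$, hence its determinant is $\asymp 1$ and its inverse entries are $O(1)$. Solving gives $\partial_1^{j_1}\partial_2^{j_2} x_{{\oldstylenums{0}}i} \Lt_{j_1, j_2} 1$, completing the induction.

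The only delicate point is the bookkeeping in Fa\`a di Bruno: one needs to verify that no term of order $j_1 + j_2$ in the unknowns $x_{{\oldstylenums{0}}i}$ other than the linear ones identified above can appear. This is precisely the content of the second and third coefficient statements of Lemma \ref{lem: Bruno} (the terms with $(k_1, k_2) = (1, 0)$ or $(0, 1)$ and all inner multi-indices equal to $(j_1, j_2)$), so the isolation is clean and the induction closes without further difficulty.
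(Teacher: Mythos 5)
Your proof is correct and follows essentially the same route as the paper: induct on $j_1+j_2$, differentiate the two defining equations, expand by Lemma \ref{lem: Bruno}, observe that only the $(k_1,k_2)=(0,1),(1,0)$ terms carry the top-order derivatives of $x_{{\oldstylenums{0}}1},x_{{\oldstylenums{0}}2}$, and solve the resulting near-diagonal $2\times 2$ linear system. Your explicit write-out of that system and its invertibility (the paper leaves it implicit, and in fact carries a harmless sign slip on the $\partial_1\delta_2$ entry) is a fair cosmetic clarification; the only small inaccuracy is your parenthetical invoking the ``third coefficient statement'' of Lemma \ref{lem: Bruno}, which concerns the $(k_1,k_2)=(j_1,j_2)$ term and is not needed here --- the second statement alone isolates the top-order unknowns.
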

	
	\begin{proof}[Proof of Lemma \ref{lem: x0 < 1}] For this we use an induction on $j_1 +j_2$. 
		The case $j_1 = j_2 =0$ is clear because the domains of our functions are compact. Suppose that \eqref{4eq: v01, v02 bounded} is already proven for $j_1 + j_2 \leqslant j$. For $ j_1 + j_2 = j + 1$, we apply $ (\partial/\partial y_1)^{j_1} (\partial/\partial y_2)^{j_2} $ to both of the equations
		\begin{align}
			\label{4eq: equation 1}	 f_1 (x_{{\oldstylenums{0}}1}     (y_1, y_2) ) -   \delta_1 ( x_{{\oldstylenums{0}}1}     (y_1, y_2) , x_{{\oldstylenums{0}}2}     (y_1, y_2)) & = y_1 , \\
			\label{4eq: equation 2}	 f_2 (x_{{\oldstylenums{0}}2}     (y_1, y_2) ) -   \delta_2 ( x_{{\oldstylenums{0}}1}     (y_1, y_2) , x_{{\oldstylenums{0}}2}     (y_1, y_2)) & = y_2 ,
		\end{align}
		and then use Lemma \ref{lem: Bruno} to expand the left-hand sides. By the induction hypothesis, along with \eqref{4eq: bounds for f} and \eqref{4eq: bounds for delta}, we infer that all the terms in the expansions are $O (1)$ except perhaps for the two with $ (k_1, k_2 ) =  (0, 1) , (1, 0)$.  Therefore we arrive at
		\begin{align*}
			\lp f_1' (x_{{\oldstylenums{0}}1}    ) - \partial_1 \delta_1 ( x_{{\oldstylenums{0}}1}      , x_{{\oldstylenums{0}}2}     ) \rp \partial_{1}^{j_1} \partial_{2}^{j_2} 	x_{{\oldstylenums{0}}1}     -   \partial_2 \delta_1 ( x_{{\oldstylenums{0}}1}      , x_{{\oldstylenums{0}}2}     ) \cdot \partial_{1}^{j_1} \partial_{2}^{j_2} 	x_{{\oldstylenums{0}}2}     & = O (1) , \\
			\partial_1   \delta_2 ( x_{{\oldstylenums{0}}1}      , x_{{\oldstylenums{0}}2}     ) \cdot \partial_{1}^{j_1} \partial_{2}^{j_2} 	x_{{\oldstylenums{0}}1}     +  \lp f_2' (x_{{\oldstylenums{0}}2}    ) - \partial_2  \delta_2 ( x_{{\oldstylenums{0}}1}      , x_{{\oldstylenums{0}}2}     ) \rp \partial_{1}^{j_1} \partial_{2}^{j_2} 	x_{{\oldstylenums{0}}2}     & = O (1) , 
		\end{align*}
		and these together with  \eqref{4eq: bounds for f} and \eqref{4eq: bounds for delta} yield \eqref{4eq: v01, v02 bounded}. 
	\end{proof}
	
	A direct consequence of \eqref{4eq: bounds for delta}, \eqref{4eq: v01, v02 bounded}, and Lemma \ref{lem: Bruno} is the following estimates:
	\begin{align}\label{4eq: delta(v01, v02) small}
		\begin{aligned}
			\frac{\partial^{j_1+j_2} \delta_1 ( x_{{\oldstylenums{0}}1}     (y_1, y_2) , x_{{\oldstylenums{0}}2}     (y_1, y_2)) }{ \partial y_1^{j_1 } \partial y_2^{ j_2} }     \Lt  \delta_1, \quad   \frac{\partial^{j_1+j_2} \delta_2 ( x_{{\oldstylenums{0}}1}     (y_1, y_2) , x_{{\oldstylenums{0}}2}     (y_1, y_2)) }{ \partial y_1^{j_1 } \partial y_2^{ j_2} }      \Lt  \delta_2 . 
		\end{aligned}
	\end{align}

	\begin{proof}[Proof of Lemma \ref{lem: rho small}]
		By symmetry, we only  consider the derivatives of $ \rho_1  (y_1, y_2) $. 
		
		Observe that when applying  $ (\partial/\partial y_1)^{j_1} (\partial/\partial y_2)^{j_2} $  to \eqref{4eq: equation 1}, the right-hand side vanishes if $ j_2 > 0$.  By similar inductive arguments, one can use \eqref{4eq: delta(v01, v02) small} and Lemma \ref{lem: Bruno} to verify 
		\begin{align*} 
			\partial_{1}^{j_1} \partial_{2}^{j_2} 	x_{{\oldstylenums{0}}1}     (y_1, y_2) \Lt_{j_1, \shskip j_2} \delta_1 , 
			\qquad j_2 > 0,  
		\end{align*} 
		which settles the case $j_2 > 0$ since $ \partial_{1}^{j_1} \partial_{2}^{j_2} 	x_{{\oldstylenums{0}}1}     (y_1, y_2) = \partial_{1}^{j_1} \partial_{2}^{j_2} 	\rho_1 (y_1, y_2) $. 
		For the case $ j_2 = 0 $, we use Lemma   \ref{lem: 1-dim}. 
		Since $x_{{\oldstylenums{0}}1}     $ is the inverse of $f_1$, we may rewrite \eqref{4eq: equation 1} as
		\begin{align}\label{5eq: rho = x0 - x0}
			\rho_{1} (y_1, y_2) = x_{{\oldstylenums{0}} 1}     \big(y_1 +   \delta_1 ( x_{{\oldstylenums{0}}1}     (y_1, y_2) , x_{{\oldstylenums{0}}2}     (y_1, y_2)) \big) - x_{{\oldstylenums{0}} 1}     (y_1), 
		\end{align}
		which is exactly in the form of   \eqref{4eq: rho = , dim 1}. Thus the proof of Lemma \ref{lem: rho small} is completed by \eqref{4eq: delta(v01, v02) small} and Lemma \ref{lem: 1-dim}. 
	\end{proof}

	\begin{lem}\label{lem: rho2}
		Suppose that  $\delta_1 = \delta_2 = \delta$. If we write
		\begin{align}\label{5eq: rho2}
			\begin{aligned}
				& \rho_1 (y_1, y_2) = \delta_1 ( x_{{\oldstylenums{0}}1}     (y_1) ,  x_{{\oldstylenums{0}}2}     (y_2) ) \cdot x_{{\oldstylenums{0}} 1}' (y_1) + \rho_{12} (y_1, y_2), \\
				& \rho_2 (y_1, y_2) = \delta_2 ( x_{{\oldstylenums{0}}1}     (y_1) ,  x_{{\oldstylenums{0}}2}     (y_2) ) \cdot x_{{\oldstylenums{0}} 2}' (y_2) + \rho_{22} (y_1, y_2), 
			\end{aligned}
		\end{align} then
		\begin{align}\label{3eq: small rho2}
			\partial_{1}^{j_1} \partial_{2}^{j_2} \rho_{1 2}   (y_1, y_2 ), \, \partial_{1}^{j_1} \partial_{2}^{j_2} \rho_{2 2}   (y_1, y_2 ) \Lt_{j_1, \shskip j_2} \delta^2 . 
		\end{align} 
	\end{lem}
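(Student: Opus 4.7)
The plan is to start from identity \eqref{5eq: rho = x0 - x0}, which reads
\[
\rho_1(y_1, y_2) = x_{{\oldstylenums{0}} 1}\bigl(y_1 + \Delta(y_1, y_2)\bigr) - x_{{\oldstylenums{0}} 1}(y_1),
\]
where $\Delta(y_1, y_2) = \delta_1(x_{{\oldstylenums{0}}1}(y_1, y_2), x_{{\oldstylenums{0}}2}(y_1, y_2))$, and to run two successive Taylor expansions. The first, in the single variable $y_1$, is the integral remainder formula
\[
\rho_1(y_1, y_2) - x_{{\oldstylenums{0}} 1}'(y_1)\shskip \Delta(y_1, y_2) = \Delta(y_1, y_2)^2 \int_0^1 (1-t) \shskip x_{{\oldstylenums{0}} 1}''\bigl(y_1 + t \Delta(y_1, y_2)\bigr) \nd t.
\]
By \eqref{4eq: delta(v01, v02) small} every mixed partial of $\Delta$ is $\Lt \delta$, so differentiating under the integral and using the Leibniz rule on the $\Delta^2$-factor produces $\delta^2$, while Lemma \ref{lem: Bruno} bounds the $y$-derivatives of $x_{{\oldstylenums{0}} 1}''(y_1 + t\Delta)$ by $O(1)$ uniformly in $t \in [0, 1]$. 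Hence $\rho_1 = x_{{\oldstylenums{0}} 1}'(y_1) \shskip \Delta(y_1, y_2) + R(y_1, y_2)$ with $\partial_{1}^{j_1} \partial_{2}^{j_2} R \Lt_{j_1, j_2} \delta^2$.

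The second step replaces $\Delta(y_1, y_2)$ by $\delta_1(x_{{\oldstylenums{0}}1}(y_1), x_{{\oldstylenums{0}}2}(y_2))$. The discrepancy
\[
E(y_1, y_2) = \delta_1\bigl(x_{{\oldstylenums{0}}1}(y_1) + \rho_1, \, x_{{\oldstylenums{0}}2}(y_2) + \rho_2\bigr) - \delta_1\bigl(x_{{\oldstylenums{0}}1}(y_1), \, x_{{\oldstylenums{0}}2}(y_2)\bigr)
\]
has the two-variable integral-remainder representation
\[
E(y_1, y_2) = \int_0^1 \bigl(\rho_1 \shskip \partial_1 \delta_1 + \rho_2 \shskip \partial_2 \delta_1\bigr)\bigl(x_{{\oldstylenums{0}}1}(y_1) + t\rho_1, \, x_{{\oldstylenums{0}}2}(y_2) + t\rho_2\bigr) \nd t.
\]
By \eqref{4eq: bounds for delta} all partials of $\partial_i \delta_1$ are $\Lt \delta$, by Lemma \ref{lem: rho small} all partials of $\rho_1, \rho_2$ are $\Lt \delta$, and by Lemmas \ref{lem: x0 < 1} and \ref{lem: Bruno} the partials of $\partial_i \delta_1$ composed with the perturbed arguments remain $\Lt \delta$. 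Applying Leibniz together with Lemma \ref{lem: Bruno} to $\partial_{1}^{j_1} \partial_{2}^{j_2} E$, every surviving term is a product of one factor of $\rho_i$ or its derivative (all $\Lt \delta$) with one factor of $\partial_i \delta_1$ or its derivative (again $\Lt \delta$), so $\partial_{1}^{j_1} \partial_{2}^{j_2} E \Lt_{j_1, j_2} \delta^2$.

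Combining the two steps yields $\rho_{12} = \rho_1 - x_{{\oldstylenums{0}} 1}'(y_1)\shskip \delta_1(x_{{\oldstylenums{0}}1}(y_1), x_{{\oldstylenums{0}}2}(y_2)) = x_{{\oldstylenums{0}} 1}'(y_1)\shskip E + R$, verifying \eqref{3eq: small rho2}; the bound for $\rho_{22}$ follows by symmetry with indices swapped. I expect the main obstacle to be the Fa\`a di Bruno bookkeeping for $\partial^{j_1 + j_2} E$: one has to check that \emph{every} term produced by the Leibniz--Fa\`a di Bruno expansion of the integrand genuinely carries both a $\delta$-factor from a derivative of $\delta_1$ and a $\delta$-factor from a $\rho_i$-derivative, so that neither saving is absorbed into an $O(1)$ factor. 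Once this accounting is in place, the $\delta^2$-gain propagates cleanly through the integral and the lemma follows.
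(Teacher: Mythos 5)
Your proof is correct, and it arrives at exactly the paper's decomposition: your $R$ is the paper's $\rho_{12}^{\ssharp} = x_{01}(y_1+\delta_1^{\ssharp})-x_{01}(y_1)-x_{01}'(y_1)\delta_1^{\ssharp}$, with $\delta_1^{\ssharp}=\delta_1(x_{01}(y_1,y_2),x_{02}(y_1,y_2))$, and your $x_{01}'(y_1)E$ is the paper's $\rho_{12}^{\sflat}$. The difference lies in how the two pieces are bounded. The paper first invokes Lemma \ref{lem: 1-dim, 2} for the pure $\partial_1^{j_1}$-derivatives of $\rho_{12}^{\ssharp}$ and then a separate Fa\`a di Bruno argument combined with Lemma \ref{lem: 1-dim} for the mixed $\partial_2^{j_2}$-derivatives, whereas your single integral-remainder representation $R=\Delta^2\int_0^1(1-t)\shskip x_{01}''(y_1+t\Delta)\shskip\nd t$ handles all $\partial_1^{j_1}\partial_2^{j_2}$ uniformly via Leibniz and Lemma \ref{lem: Bruno}. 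For $\rho_{12}^{\sflat}$ the paper appeals to Lemma \ref{lem: 2-dim} with $g=\delta_1$, which as stated only yields $\Lt_g\delta$; to reach $\delta^2$ one must additionally observe that the implied constant there scales with the derivative bounds of $g$, which are themselves $\Lt\delta$. Your fundamental-theorem-of-calculus form for $E$ makes this double gain explicit rather than implicit in an implied-constant dependence, so the bookkeeping you flagged does go through cleanly: each Leibniz--Fa\`a di Bruno term carries one factor from a derivative of $\rho_i$ ($\Lt\delta$ by Lemma \ref{lem: rho small}) and one from a derivative of $\partial_i\delta_1$ evaluated at the perturbed arguments ($\Lt\delta$ by \eqref{4eq: bounds for delta} together with Lemmas \ref{lem: x0 < 1} and \ref{lem: Bruno}). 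In short, same decomposition as the paper, with a more self-contained and quantitatively transparent route to the $\delta^2$-bounds.
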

	
	\begin{proof}
		By symmetry, we only  consider the derivatives of $ \rho_{12}  (y_1, y_2) $.	By \eqref{3eq: defn of x0}, \eqref{5eq: rho = x0 - x0},   and \eqref{5eq: rho2}, we split  
		\begin{align*}
			\rho_{1 2}   (y_1, y_2 ) = \rho_{1 2}^{\ssharp}   (y_1, y_2 ) + \rho_{1 2}^{\sflat}   (y_1, y_2 ), 
		\end{align*}
		with 
		\begin{align*}
			\rho_{1 2}^{\ssharp}   (y_1, y_2 ) = & \, x_{{\oldstylenums{0}} 1}      (y_1 +  \delta_1^{\ssharp} (y_1, y_2)   )   - x_{{\oldstylenums{0}} 1}     (y_1) - x_{{\oldstylenums{0}} 1} '    (y_1) \cdot \delta_1^{\ssharp} (y_1, y_2) ,
		\end{align*}
		where $\delta_1^{\ssharp} (y_1, y_2) = \delta_1 ( x_{{\oldstylenums{0}}1}     (y_1, y_2) , x_{{\oldstylenums{0}}2}     (y_1, y_2))$, and
		\begin{align*}
			\rho_{1 2}^{\sflat} \hskip -1pt  (y_1, y_2 ) \hskip -1pt = \hskip -1pt x_{{\oldstylenums{0}} 1} '    (y_1) \big(  \delta_1 ( x_{{\oldstylenums{0}}1}     (y_1) + \rho_1 (y_1, y_2),  x_{{\oldstylenums{0}}2}     (y_2) + \rho_2  (y_1, y_2)  )   - \delta_1 ( x_{{\oldstylenums{0}}1}     (y_1) ,  x_{{\oldstylenums{0}}2}     (y_2)   \big) .
		\end{align*} 
		Since $\rho_{1 2}^{\ssharp}   (y_1, y_2 )$ is of the form \eqref{4eq: rho = , dim 1, 2}, Lemma \ref{lem: 1-dim, 2} may be used to prove $ \partial_{1}^{j_1} \rho_{1 2}^{\ssharp}    (y_1, y_2 )  \Lt \delta^2 $. Next, by applying $  (\partial  /\partial y_2 )^{j_2} $ and Fa\`a di Bruno's formula in Lemma \ref{lem: Bruno, 1},  along with \eqref{4eq: delta(v01, v02) small}, we have 
		\begin{align*}
			\partial_2^{ j_2} \rho_{1 2}^{\ssharp} (y_1, y_2)  =  \big(x_{{\oldstylenums{0}} 1} '     (y_1 +   \delta_1^{\ssharp} (y_1, y_2)   ) - x_{{\oldstylenums{0}} 1} '     (y_1) \big) \cdot   \partial_2^{ j_2} \delta_1^{\ssharp} (y_1, y_2)   + O  ( \delta^2  ) ,
		\end{align*}
		and it follows from Lemma \ref{lem: 1-dim} that $ \partial_{1}^{j_1} \partial_2^{ j_2} \rho_{1 2}^{\ssharp}    (y_1, y_2 )  \Lt \delta^2 $. As for $\rho_{1 2}^{\sflat}    (y_1, y_2 )$ we use Lemma \ref{lem: 2-dim} to get   similar estimates. 
	\end{proof}

	Finally, the following result  in a simplified setting will be useful.
	
	\begin{lem}\label{lem: solution in 1-dimension}
		Let  $f (x)$ and $ \delta  ( x_1, x_2)$ be smooth functions satisfying
		\begin{align*} 
			f ' (x)  \asymp 1, \qquad   f^{(j)}(x) \Lt_{j} 1 , \qquad \partial_{1}^{j_1} \partial_{2}^{j_2} \delta   (x_1, x_2 )  \Lt_{j_1, \shskip j_2} \delta . 
		\end{align*}   
	Let $x_{{\oldstylenums{0}}} (y)$ be the inverse of $f  (x)$. Then the equation
	\begin{equation*}
		f  (x_1 ) = y +   \delta  ( x_1, x_2) 
	\end{equation*}
has a unique solution of the form $x_{\oldstylenums{0}1} (y, x_2) = x_{\oldstylenums{0}} (y) + \rho (y, x_2) $  with  $\partial^{j+j_2}  \rho (y, x_2)/\partial y^{j} \partial x_2^{j_2} \allowbreak \Lt_{j , \shskip j_2} \delta $.
	\end{lem}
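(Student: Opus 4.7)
My plan is to reduce the statement to Lemma \ref{lem: rho small} by adjoining a trivial second equation. In the setup of \eqref{4eq: equations}, take $f_1 = f$, $\delta_1 = \delta$, $f_2 (x) = x$, and $\delta_2 \equiv 0$. Then $f_2' (x) = 1 \asymp 1$, $f_2^{(j)} (x) \Lt_j 1$ for all $j$, and $\delta_2$ trivially satisfies \eqref{4eq: bounds for delta} with any $\delta_2 \geqslant 0$. The inverse of $f_2$ is the identity, so the second equation forces $x_{{\oldstylenums{0}}2} (y_1, y_2) = y_2$. Relabeling $y_1$ as $y$ and $y_2$ as $x_2$, the first equation becomes exactly $f (x_1) = y + \delta (x_1, x_2)$, and the function $\rho_1 (y_1, y_2)$ appearing in \eqref{3eq: defn of x0} becomes precisely the $\rho (y, x_2)$ of the present lemma.

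Existence and uniqueness follow as in the discussion preceding Lemma \ref{lem: rho small}: the map $x_1 \mapsto f(x_1) - \delta (x_1, x_2)$ is strictly monotonic in $x_1$ for each fixed $x_2$, since $f'(x_1) \asymp 1$ dominates $\partial_1 \delta (x_1, x_2) \Lt \delta$ when $\delta$ is sufficiently small. The derivative bound is then immediate from Lemma \ref{lem: rho small}: its conclusion gives $\partial_{1}^{j} \partial_{2}^{j_2} \rho_1 (y_1, y_2) \Lt_{j, j_2} \delta_1 = \delta$, which under the relabeling is exactly $\partial^{j+j_2} \rho (y, x_2) / \partial y^{j} \partial x_2^{j_2} \Lt_{j, j_2} \delta$.

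There is essentially no obstacle here: the hard work — the inductive boundedness argument of Lemma \ref{lem: x0 < 1}, the chain-rule expansion via Lemma \ref{lem: Bruno}, and the one-variable mean value estimate of Lemma \ref{lem: 1-dim} — has already been carried out in Lemma \ref{lem: rho small} and applies verbatim after the reduction. Alternatively, one could re-run the proof of Lemma \ref{lem: rho small} directly in this simpler one-equation setting, where the two-dimensional implicit function theorem collapses to its one-variable counterpart with $x_2$ treated as a passive parameter throughout; but the black-box reduction above is the cleanest presentation and requires no new analytic input.
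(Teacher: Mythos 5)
The paper states this lemma without an explicit proof, simply calling it a ``result in a simplified setting'' following Lemma \ref{lem: rho small}; your reduction to Lemma \ref{lem: rho small} by taking $f_1=f$, $\delta_1=\delta$, $f_2(x)=x$, $\delta_2\equiv 0$ is correct, the bookkeeping of hypotheses and the relabeling $y_1\to y$, $y_2\to x_2$ are handled cleanly, and this is precisely the sort of specialization the paper has in mind.
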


	\section{\texorpdfstring{Treating the sum $S (\vv; n, p_1, p_2)$}{Treating the sum S (v; n, p1, p2)}}
	
	
	Since $\vw$ and $\vv$ will play a minor role in what follows, we shall write  $\CalL = \CalL (\vw ; \vv)$, $ \phi ( x_1, x_2 ) =  \phi ( x_1, x_2 ; \vw ; \vv ) $, and $\delta ( x_1, x_2 ) = \delta ( x_1, x_2; \vw ; \vv )$; see \eqref{2eq: defn of L(x;v)}--\eqref{2eq: delta(v1, v2; v)} for their definitions. We stress  that all the implied constants in the sequel will be independent on the values of $\vw$ and $\vv$.  
	
	Recall that  
	\begin{align}\label{5eq: f (x1, x2)}
		\phi ( x_1, x_2 ) =   \phi (x_1) - y_1 x_1  -  \phi (x_2) + y_2 x_2  + \delta  (x_1, x_2 )  . 
	\end{align} 
	Firstly, we have
	\begin{align}\label{4eq: f'=}
		\phi' ( x_1, x_2  ) = ( \phi' (x_1) - y_1,  - \phi' (x_2) + y_2  ) + \delta' ( x_1, x_2  ),
	\end{align}
	and
	\begin{align}\label{4eq: f''=}
		\phi'' ( x_1, x_2  ) = \begin{pmatrix}
			\phi'' (x_1) & \\ & - \phi'' (x_2)
		\end{pmatrix}  + \delta'' ( x_1, x_2  ). 
	\end{align}
	Subsequently, we shall denote $\delta = { K^2} / {T |\vw|} $ and let $\delta$ be sufficiently small. Indeed, it follows from \eqref{2eq: N<T} and \eqref{2eq: x >K2/N} that $\delta \Lt N/T < 1/N^{\vepsilon} $.  It is critical that $\delta   (x_1, x_2 )$ and its  derivatives are very small: 
	\begin{align}\label{4eq: small delta}
		\partial_{1}^{j_1} \partial_{2}^{j_2} \delta   (x_1, x_2 )  \Lt_{j_1, \shskip j_2} \delta , \quad \quad (x_1, x_2) \in (1/16, 64)^2, 
	\end{align}
	as in  \eqref{2eq: bounds for delta}.  
	
	Given \eqref{0eq: phi =}, we have
	\begin{equation}\label{4eq: phi' and phi''}
		\phi' (x) = \left\{ \begin{aligned}
			&  \ds   1 /  {2\pi x} ,   \\
			& a \beta x^{\shskip\beta - 1}, 
		\end{aligned}  \right. \quad \quad \phi'' (x) = \left\{ \begin{aligned}
			&  \ds  - 1 /  {2\pi x^2} ,   \\
			& a \beta (\beta-1) x^{\shskip\beta - 2} .
		\end{aligned}  \right. 
	\end{equation}
	
	In view of \eqref{4eq: f''=}, \eqref{4eq: small delta}, and \eqref{4eq: phi' and phi''}, we have uniformly
	\begin{align}\label{4eq: det f''}
		- \det \phi'' (x_1, x_2)   \Gt 1,   \qquad \partial_{1}^{j_1} \partial_{2}^{j_2} \phi (x_1, x_2) \Lt_{j_1, \shskip j_2}  1,   \qquad (x_1, x_2) \in (1/16, 64)^2. 
	\end{align}
	
	Let $a > 0$. Then  $\phi' (x)$ has  inverse function  
	\begin{align}\label{4eq: v0}
		x_{{\oldstylenums{0}}}     (y) = \left\{ \begin{aligned}
			&  \ds   1 /  {2\pi y} ,  \\
			& (y / a\beta)^{1/(\beta-1)} .
		\end{aligned}  \right.  
	\end{align}  
	
	\delete{	With no loss of generality, we shall also assume $a > 0$. The analysis would be quite easy if  $\delta (x_1, x_2)$ were ignored in $f (x_1, x_2)$; for example,    in the case  $y > 0$ the stationary point would be at $ (x_{{\oldstylenums{0}}}      (y_1), x_{{\oldstylenums{0}}}      (y_2)) $, with
		\begin{align}
			x_{{\oldstylenums{0}}}     (y) = \left\{ \begin{aligned}
				&  \ds   1 /  {2\pi y} ,  \\
				& (y / a\beta)^{1/(\beta-1)} ,
			\end{aligned}  \right.  
		\end{align}  
		while the    stationary phase would be  
		\begin{align}
			(\phi (x_{{\oldstylenums{0}}}     (y_1)) - y_1 x_{{\oldstylenums{0}}}     (y_1)) - (\phi (x_{{\oldstylenums{0}}}     (y_2)) - y_2 x_{{\oldstylenums{0}}}     (y_2))  = - \psi (y_1) + \psi (y_2), 
		\end{align}    with $\psi  $ defined in \eqref{2eq: defn of psi}. }

	\subsection{Application of stationary phase} \label{sec: analysis of stationary point}
	
	It is clear that we are in the setting of \S \ref{sec: analytic lemmas}, but we would like to make the domains of functions more explicit. 
	
	\begin{lem}\label{lem: range of theta}
		Set  $\varTheta = 1/ 4\pi$ or $  a \beta $ and $\varDelta = 2$ or $2^{\beta-1}$ according as $\phi (x) = \log x / 2\pi $ or $ a x^{\shskip \beta} $. Then for any $\varOmega_1, \varOmega_2 \in  ( \varTheta / \varDelta^2 , \varTheta/ \varDelta  )$ and $\varOmega_1', \varOmega_2' \in ( \varDelta^2 \varTheta , \varDelta^3 \varTheta   )$ we have $
		\CalL  = O_{A} \big(T^{-A} \big) $
		for arbitrary $A \geqslant 0$, unless $ (y_1, y_2) \in [   \varOmega_1, c'_1] \times [  \varOmega_2, \varOmega_2'] $. 
	\end{lem}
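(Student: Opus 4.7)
The plan is to apply the non-stationary phase bound in Lemma \ref{lem: partial integration} directly to the oscillatory integral $\CalL$, after establishing a uniform lower bound on the gradient of the phase $\phi(x_1, x_2; \vw; \vv)$. Since $V_{\snatural}$ is supported in $[1,2]$, take as compact set $K = [1, 2]^2$. The relevant range of $(y_1, y_2)$ is effectively bounded: from \eqref{2eq: theta1, theta2} together with $r_i \Lt R = PT/N$, $p_i \sim P$, and \eqref{5eq: R = Pt/N}, one has $y_i \Lt 1$, so the phase $\phi(\cdot, \cdot; \vw; \vv)$ stays in a bounded set in $C^{k+1}$ and the implied constant in Lemma \ref{lem: partial integration} is uniform in $\vw, \vv$.

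The analytic core is a dichotomy on the gradient. By \eqref{4eq: phi' and phi''}, for $x \in [1,2]$ the derivative $\phi'(x)$ sweeps the closed interval $[\varTheta, \varTheta\varDelta]$ in both the logarithm and monomial cases. Using \eqref{4eq: f'=} together with the bound $\delta = K^2/(T|\vw|) \Lt N/T \Lt N^{-\vepsilon}$, which follows from \eqref{2eq: N<T}, \eqref{2eq: x >K2/N}, and \eqref{4eq: small delta}, one has
\[
\partial_1 \phi = \phi'(x_1) - y_1 + O(\delta), \qquad \partial_2 \phi = y_2 - \phi'(x_2) + O(\delta).
\]
If $y_1 < \varOmega_1 < \varTheta/\varDelta$, then $|\partial_1\phi| \geqslant \varTheta - \varTheta/\varDelta - O(\delta) \Gt 1$, while if $y_1 > \varOmega_1' > \varDelta^2\varTheta$, then $|\partial_1\phi| \geqslant \varOmega_1' - \varTheta\varDelta - O(\delta) \Gt 1$, since $\varDelta > 1$ in both cases. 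The identical argument with the roles of $(y_1, x_1, \partial_1)$ replaced by $(y_2, x_2, \partial_2)$ shows that $|\partial_2\phi| \Gt 1$ when $y_2 \notin [\varOmega_2, \varOmega_2']$. Consequently $|\phi'(x_1, x_2)| \Gt 1$ uniformly on $K$ whenever $(y_1, y_2)$ lies outside $[\varOmega_1, \varOmega_1'] \times [\varOmega_2, \varOmega_2']$.

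Finally, one invokes Lemma \ref{lem: partial integration} with $u(x_1, x_2) = V_{\snatural}(x_1)\overline{V_{\snatural}(x_2)}$, $f = \phi(\cdot, \cdot; \vw; \vv)$, and $\lambdaup = T$. The hypotheses on $V_{\snatural}$ give $|\partial_1^{j_1}\partial_2^{j_2} u| \Lt 1$, the denominators $|\phi'|^{2k - j_1 - j_2}$ are controlled by the gradient lower bound above, and the $C^{k+1}$ bounds on $\phi$ are supplied by \eqref{4eq: det f''} and \eqref{4eq: small delta}. Since $T > N^{1+\vepsilon}$, taking $k$ sufficiently large produces $\CalL = O_A(T^{-A})$ for any $A \geqslant 0$, as claimed. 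The only point that requires care is the uniformity of all implied constants in the auxiliary parameters $\vw$ and $\vv$; this is guaranteed by the uniform derivative estimate \eqref{2eq: bounds for delta} coupled with the structural formula \eqref{2eq: delta(v1, v2; v)} for $\delta(x_1, x_2; \vw; \vv)$.
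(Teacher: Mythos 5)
Your proof is correct and follows essentially the same route as the paper's: observe that $\phi'$ maps $[1,2]$ onto $[\varTheta,\varDelta\varTheta]$, use the smallness of $\delta$ (and hence of $\delta'$) to force $|\phi'(x_1,x_2)|\Gt 1$ when $(y_1,y_2)$ is outside the box, and then conclude by Lemma \ref{lem: partial integration}. The paper's version is terser (it states the lower bound on $|\phi'(x_1)-y_1|^2+|\phi'(x_2)-y_2|^2$ directly without splitting into the four side cases), but the underlying argument is identical.
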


	\begin{proof}
		Suppose that $(x_1, x_2) \in [1,2]^2$.	Note that the range of $\phi' (x)$ for $x \in [1, 2]$ is   $ [ \varTheta, \varDelta \varTheta]$. Therefore $|\phi' (x_1) - y_1|^2 + |\phi' (x_2) -  y_2|^2 \Gt 1$ for all  $ (y_1, y_2) \notin [ \varOmega_1, \varOmega_1'] \times [ \varOmega_2, \varOmega_2'] $. Thanks to  \eqref{4eq: small delta}, we have  $|\phi' ( x_1, x_2  ) |   \Gt  1$ provided that $ \delta  $ is small enough. It follows from Lemma \ref{lem: partial integration} that the integral $\CalL $ is negligibly small. 
	\end{proof}

	In view of Lemma \ref{lem: range of theta}, we now assume that $ y_1, y_2  \in ( \varTheta / \varDelta^2 , \varDelta^3 \varTheta )$. First of all, we prove that there exists  a unique stationary point of $ \phi (x_1, x_2)$  inside $[1/8, 16]^2$. By \eqref{4eq: f'=}, we need to solve the equations
	\begin{equation}\label{4eq: stationary point equation} 
		\phi' (x_1) = y_1 - \partial_1 \delta ( x_1, x_2), \quad   \phi' (x_2) = y_2 + \partial_2 \delta ( x_1, x_2) .  
	\end{equation}  
	Note that $\phi'$ maps $ [ 1/8, 16] $ onto $ [  \varTheta / \varDelta^3 , \varDelta^4 \varTheta ] $. Thus \eqref{4eq: small delta} implies that $( \varTheta / \varDelta^2 , \varDelta^3 \varTheta )^2$ is contained in the image of the map $(\phi' (x_1)- \partial_1 \delta ( x_1, x_2), \phi' (x_2) + \partial_2 \delta ( x_1, x_2))$, and hence the equations in \eqref{4eq: stationary point equation} are solvable. Moreover, the solution must be unique by our discussions before.  
	Therefore  one may write the stationary point  in the form:
	\begin{align}\label{4eq: stationary point}
		x_{{\oldstylenums{0}}1}     (y_1, y_2) = x_{{\oldstylenums{0}}}     (y_1) - \rho_1 (y_1, y_2), \quad x_{{\oldstylenums{0}}2}     (y_1, y_2) = x_{{\oldstylenums{0}}}     (y_2) + \rho_2  (y_1, y_2), 
	\end{align}
	where $ x_{{\oldstylenums{0}}}      $ is defined as in \eqref{4eq: v0}. By Lemma \ref{lem: rho small} and \ref{lem: x0 < 1}, the functions $x_{{\oldstylenums{0}}1}$, $x_{{\oldstylenums{0}}2}$, $\rho_1$, and $\rho_2$ have the following estimates. 
	
	
	\begin{lem}\label{lem: rho small in stationary}
		For $(y_1, y_2) \in  ( \varTheta / \varDelta^2 , \varDelta^3 \varTheta )^2$ we have 
		\begin{align}\label{6eq: v01, v02 bounded}
			\partial_{1}^{j_1} \partial_{2}^{j_2} 	x_{{\oldstylenums{0}}1}     (y_1, y_2), \, \partial_{1}^{j_1} \partial_{2}^{j_2} 	x_{{\oldstylenums{0}}2 } (y_1, y_2) \Lt_{j_1, \shskip j_2}  1,  
		\end{align} 
		and
		\begin{align}\label{4eq: small rho}
			\partial_{1}^{j_1} \partial_{2}^{j_2} \rho_1   (y_1, y_2 ) , \, \partial_{1}^{j_1} \partial_{2}^{j_2} \rho_2  (y_1, y_2 )  \Lt_{j_1, \shskip j_2}  \delta . 
		\end{align}
	\end{lem}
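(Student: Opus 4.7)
The plan is to reduce this statement directly to the general analytic machinery developed in \S\ref{sec: analytic lemmas}, specifically Lemma \ref{lem: x0 < 1} and Lemma \ref{lem: rho small}, by verifying that the stationary point equations \eqref{4eq: stationary point equation} fit into the abstract framework \eqref{4eq: equations}--\eqref{4eq: bounds for delta}.

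First I would match up notation. In \eqref{4eq: equations} take $f_1 = f_2 = \phi'$, restricted to the compact interval $[1/8, 16]$ (which is the region in which the stationary point is shown to lie in the text just before \eqref{4eq: stationary point}). Then \eqref{4eq: stationary point equation} is exactly of the form $f_1 (x_1) = y_1 + \widetilde{\delta}_1 (x_1, x_2)$, $f_2 (x_2) = y_2 + \widetilde{\delta}_2 (x_1, x_2)$ with $\widetilde{\delta}_1 = -\partial_1 \delta$ and $\widetilde{\delta}_2 = \partial_2 \delta$. To invoke the framework I must check the three hypotheses. The lower bound $f_1'(x), f_2'(x) \asymp 1$ follows from \eqref{4eq: phi' and phi''}: either $\phi''(x) = -1/(2\pi x^2)$ or $\phi''(x) = a\beta(\beta-1) x^{\beta-2}$, both of which are bounded above and below by absolute constants on the compact interval $[1/8, 16]$; higher derivatives are likewise bounded, giving \eqref{4eq: bounds for f}. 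The bounds $\partial_1^{j_1} \partial_2^{j_2} \widetilde{\delta}_i (x_1, x_2) \Lt_{j_1, j_2} \delta$ required in \eqref{4eq: bounds for delta} follow directly from \eqref{4eq: small delta}, since differentiating $\pm \partial_i \delta$ just shifts indices by one. Finally, $\delta \Lt N/T < 1/N^{\vepsilon}$ as noted right after \eqref{5eq: f (x1, x2)}, so $\delta$ is indeed sufficiently small relative to the implicit constants in $f_1', f_2' \asymp 1$.

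Next I would note that the existence and uniqueness of a solution in the form \eqref{4eq: stationary point} has already been established in the paragraph containing \eqref{4eq: stationary point equation}: the observations that $\phi'$ maps $[1/8, 16]$ onto $[\varTheta/\varDelta^3, \varDelta^4 \varTheta]$ and that $(\varTheta/\varDelta^2, \varDelta^3 \varTheta)^2$ is contained in the image of the perturbed map show surjectivity, and monotonicity of the perturbed map along each direction (since $\delta$ is small compared to $\phi''$) gives uniqueness. Moreover, inspection of \eqref{3eq: defn of x0} and \eqref{4eq: stationary point} shows that the role of $x_{{\oldstylenums{0}} 1}(y)$ in the abstract setup, namely the inverse of $f_1 = \phi'$, is played precisely by the function $x_{{\oldstylenums{0}}}(y)$ defined in \eqref{4eq: v0}; a sign change in $\widetilde{\delta}_1$ accounts for the minus sign in front of $\rho_1$ in \eqref{4eq: stationary point}, which is harmless for deriving estimates on $|\partial_1^{j_1} \partial_2^{j_2} \rho_i|$.

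With all hypotheses verified, the conclusions \eqref{6eq: v01, v02 bounded} and \eqref{4eq: small rho} are immediate applications: \eqref{6eq: v01, v02 bounded} is exactly the conclusion \eqref{4eq: v01, v02 bounded} of Lemma \ref{lem: x0 < 1}, and \eqref{4eq: small rho} is exactly the conclusion \eqref{3eq: small rho} of Lemma \ref{lem: rho small}. There is no real obstacle beyond the bookkeeping; the only step that warrants a sentence of care is the domain check, namely confirming that the stationary point genuinely lies in an interior compact subset of $[1/8, 16]^2$ for all $(y_1, y_2) \in (\varTheta/\varDelta^2, \varDelta^3 \varTheta)^2$, so that the abstract hypotheses hold uniformly in $(y_1, y_2)$, $\vw$, and $\vv$ — this is what makes the implied constants in \eqref{6eq: v01, v02 bounded} and \eqref{4eq: small rho} independent of $\vw$ and $\vv$, as promised in the opening of \S 5.
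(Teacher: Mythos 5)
Your proof is correct and takes essentially the same approach as the paper, which simply states that the estimates follow "By Lemma \ref{lem: rho small} and \ref{lem: x0 < 1}" without further elaboration. You have spelled out the implicit verification that the stationary-point system \eqref{4eq: stationary point equation} (with $f_1=f_2=\phi'$ on $[1/8,16]$) satisfies the hypotheses \eqref{4eq: bounds for f' }--\eqref{4eq: bounds for delta} of \S\ref{sec: analytic lemmas}, together with the sign bookkeeping relating \eqref{4eq: stationary point} to \eqref{3eq: defn of x0} and the domain/uniformity check, all of which is exactly what the paper is relying on.
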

	
	Moreover, the derivatives of $\rho_1$ and $\rho_2$ have asymptotic formulae as in Lemma \ref{lem: rho2}. 
	
	\begin{lem}\label{lem: rho12, rho22}
		If we let 
		\begin{align}\label{6eq: rho = }
			\begin{aligned}
				& \rho_1 (y_1, y_2) = \partial_1 \delta   ( x_{{\oldstylenums{0}}}     (y_1) ,  x_{{\oldstylenums{0}}}     (y_2) ) \cdot x_{{\oldstylenums{0}} }' (y_1) + \rho_{12} (y_1, y_2), \\
				& \rho_2 (y_1, y_2) =   \partial_2 \delta  ( x_{{\oldstylenums{0}}}     (y_1) ,  x_{{\oldstylenums{0}}}     (y_2) ) \cdot x_{{\oldstylenums{0}}}' (y_2) + \rho_{22} (y_1, y_2),
			\end{aligned}
		\end{align}
		then 
		\begin{align}\label{6eq: small rho2}
			\partial_{1}^{j_1} \partial_{2}^{j_2} \rho_{1 2}   (y_1, y_2 ), \, \partial_{1}^{j_1} \partial_{2}^{j_2} \rho_{2 2}   (y_1, y_2 ) \Lt_{j_1, \shskip j_2} \delta^2 . 
		\end{align} 
	\end{lem}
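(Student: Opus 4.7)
The plan is to recognize Lemma \ref{lem: rho12, rho22} as a direct specialization of the abstract Lemma \ref{lem: rho2} to the stationary-phase setup of \S \ref{sec: analysis of stationary point}. Concretely, I would identify the stationary-point equations \eqref{4eq: stationary point equation} as having the form \eqref{4eq: equations} of \S \ref{sec: analytic lemmas}, with $f_1 = f_2 = \phi'$, $\delta_1(x_1, x_2) = -\partial_1 \delta(x_1, x_2)$, and $\delta_2(x_1, x_2) = \partial_2 \delta(x_1, x_2)$. The size hypotheses \eqref{4eq: bounds for f' }--\eqref{4eq: bounds for delta} are then immediate: by \eqref{4eq: phi' and phi''}, one has $|\phi''(x)| \asymp 1$ and $\phi^{(j)}(x) \Lt_j 1$ on the relevant compact range away from the origin, while \eqref{4eq: small delta} shows that both $\delta_1$ and $\delta_2$, together with all their partials, are $O_{j_1,j_2}(\delta)$ with the common size parameter $\delta = K^2/(T|\vw|)$; in particular the hypothesis $\delta_1 = \delta_2 = \delta$ of Lemma \ref{lem: rho2} is satisfied.

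Next I would compare the decomposition \eqref{6eq: rho = } with the decomposition \eqref{5eq: rho2} produced by Lemma \ref{lem: rho2}, carefully tracking signs. The minus sign in front of $\rho_1$ in \eqref{4eq: stationary point} (which differs from the plus sign in the generic \eqref{3eq: defn of x0}) absorbs exactly the minus sign in $\delta_1 = -\partial_1 \delta$, so that the leading term in the expansion of $\rho_1$ becomes $+\partial_1 \delta(x_{{\oldstylenums{0}}}(y_1), x_{{\oldstylenums{0}}}(y_2)) \cdot x_{{\oldstylenums{0}}}'(y_1)$, matching \eqref{6eq: rho = }. The $\rho_2$ side corresponds directly with no sign flip. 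Hence, up to an overall sign that does not affect magnitudes, the remainders $\rho_{12}$ and $\rho_{22}$ here coincide with those of Lemma \ref{lem: rho2}, and the bound \eqref{3eq: small rho2} therein delivers the desired \eqref{6eq: small rho2}.

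The only mild technical point---not really an obstacle---is that in the logarithm case $\phi''(x) = -1/(2\pi x^2)$ is negative, so $\phi'$ is strictly decreasing rather than increasing on the compact range, and one might worry about the literal hypothesis $f_1', f_2' \asymp 1$. This is harmless under the paper's $\asymp$ convention, which compares only magnitudes: invertibility of $\phi'$ and the two-sided bound $|(\phi')'| \asymp 1$ both persist, and the proof of Lemma \ref{lem: rho2} (based on Fa\`a di Bruno's formula, Taylor's theorem, and the implicit function theorem applied to magnitude estimates) is indifferent to this sign. Consequently, no computation beyond what is already contained in Lemma \ref{lem: rho2} is required.
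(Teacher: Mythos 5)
Your proposal is correct and is exactly the route the paper takes: the text immediately preceding the lemma announces it as a consequence of Lemma \ref{lem: rho2}, and your identification $f_1=f_2=\phi'$, $\delta_1=-\partial_1\delta$, $\delta_2=\partial_2\delta$, together with the sign-tracking that the minus in \eqref{4eq: stationary point} cancels the minus in $\delta_1=-\partial_1\delta$, reproduces \eqref{6eq: rho = } with remainders $O_{j_1,j_2}(\delta^2)$. Your remark about $\phi''<0$ in the logarithm case is well placed but harmless, for precisely the magnitude-only reason you give.
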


	Now we apply Lemma \ref{lem: stationary phase} to the integral $\CalL$ as defined by \eqref{2eq: defn of L(x;v)}--\eqref{2eq: delta(v1, v2; v)}.
	
	\begin{lem}\label{lem: asymptotic of L}
		For $(y_1, y_2) \in  ( \varTheta / \varDelta^2 , \varDelta^3 \varTheta )^2$ we have 
		\begin{align}\label{4eq: asymptotic of L}
			\CalL = 	     e \big( T \big( \psi (y_2) - \psi (y_1) + \rho_{\snatural} (y_1, y_2 ) \big) \big) \cdot \frac { V_{\snatural} (y_1, y_2 )  }  { T  } + O   \lp \frac 1 {T^2} \rp  ,
		\end{align}
		where $\psi (y) $ is defined as in {\rm\eqref{2eq: defn of psi}}, 
		\begin{equation}\label{3eq: rho n =}
			\begin{aligned}
				\rho_{\snatural}  (y_1, y_2 ) & =       \phi \big(x_{{\oldstylenums{0}}}    (y_1) - \rho_1 (y_1, y_2 )   \big)  - \phi (x_{{\oldstylenums{0}}}    (y_1)) + y_1  \rho_1 (y_1, y_2 ) \\
				&\, -     \phi \big(x_{{\oldstylenums{0}}}    (y_2) + \rho_2 (y_1, y_2 ) \big)  + \phi (x_{{\oldstylenums{0}}}    (y_2)) + y_2 \rho_2 (y_1, y_2 )  
				\\
				&\, +   \delta  (x_{{\oldstylenums{0}}1}     (y_1, y_2 )  , x_{{\oldstylenums{0}}2}     (y_1, y_2 )  )   
			\end{aligned}
		\end{equation}
		satisfies
		\begin{align}\label{4eq: bounds for rho}
			\partial_{1}^{j_1} \partial_{2}^{j_2} \rho_{\snatural} (y_1, y_2 ) \Lt_{j_1, \shskip j_2}    \frac N T , 
		\end{align} 
		and the function $V_{\snatural} (y_1, y_2 ) $ is smooth and compactly supported, with
		\begin{align}\label{4eq: bounds for V}
			\partial_{1}^{j_1} \partial_{2}^{j_2} V_{\snatural} (y_1, y_2 ) \Lt_{j_1, \shskip j_2}  1  . 
		\end{align}
	\end{lem}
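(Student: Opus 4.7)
My plan is to apply Lemma \ref{lem: stationary phase} to the integral $\CalL$ defined by \eqref{2eq: defn of L(x;v)}--\eqref{2eq: delta(v1, v2; v)} with $\lambdaup = T$. The needed hypotheses are already in place: from \eqref{4eq: det f''} one has $-\det \phi''(x_1, x_2) \Gt 1$ and all derivatives of $\phi$ uniformly bounded on the relevant square; the analysis of \S \ref{sec: analysis of stationary point} produces a unique stationary point $(x_{\oldstylenums{0}1}(y_1, y_2), x_{\oldstylenums{0}2}(y_1, y_2))$ of the form \eqref{4eq: stationary point}; and away from this point, \eqref{4eq: f'=} combined with \eqref{4eq: small delta} gives $|\phi'(x_1, x_2)| \Gt 1$, so the absence of other critical points is secured.

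The value of the phase at the stationary point is computed by substituting \eqref{4eq: stationary point} into \eqref{5eq: f (x1, x2)}. A direct check in either case of \eqref{0eq: phi =}, using the definition \eqref{4eq: v0} of $x_{\oldstylenums{0}}$ and \eqref{2eq: defn of psi} of $\psi$, shows that $\phi(x_{\oldstylenums{0}}(y)) - y\, x_{\oldstylenums{0}}(y) = -\psi(y)$ up to an additive constant which cancels between $y_1$ and $y_2$. Collecting the remaining contributions reproduces precisely the expression \eqref{3eq: rho n =} for $\rho_{\snatural}$. Lemma \ref{lem: stationary phase} then delivers the main term $V_{\snatural}(y_1, y_2)\, e(T(\psi(y_2) - \psi(y_1) + \rho_{\snatural}))/T$ on setting
\begin{align*}
V_{\snatural}(y_1, y_2) = \frac{V_{\snatural}(x_{\oldstylenums{0}1}(y_1, y_2))\, \overline{V_{\snatural}(x_{\oldstylenums{0}2}(y_1, y_2))}}{\sqrt{-\det \phi''(x_{\oldstylenums{0}1}(y_1, y_2), x_{\oldstylenums{0}2}(y_1, y_2))}}.
\end{align*}
Smoothness and compact support of this $V_{\snatural}(y_1, y_2)$, together with the derivative bounds \eqref{4eq: bounds for V}, follow by the chain rule from \eqref{6eq: v01, v02 bounded} and the positivity and smoothness of $-\det \phi''$. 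The error term is $O(1/T^2)$ by Lemma \ref{lem: stationary phase} since $|\det \phi''| \Gt 1$ and $V_{\snatural}(x_1)\overline{V_{\snatural}(x_2)}$ has uniformly bounded derivatives.

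The step I expect to be most delicate is the derivative estimate \eqref{4eq: bounds for rho} for $\rho_{\snatural}$, which rests on second-order cancellation. Writing $R_1(y_1, y_2) = \phi(x_{\oldstylenums{0}}(y_1) - \rho_1) - \phi(x_{\oldstylenums{0}}(y_1)) + y_1 \rho_1$ and applying Taylor's theorem around $x_{\oldstylenums{0}}(y_1)$ with $\phi'(x_{\oldstylenums{0}}(y_1)) = y_1$, one obtains the identity $R_1 = \rho_1^2 \int_0^1 (1-s)\, \phi''(x_{\oldstylenums{0}}(y_1) - s\rho_1)\, ds$. Differentiating via Leibniz and Fa\`a di Bruno and invoking \eqref{4eq: small rho} yields $\partial_1^{j_1}\partial_2^{j_2} R_1 \Lt \delta^2$; the analogous argument handles the $R_2$-piece. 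The remaining piece $\delta(x_{\oldstylenums{0}1}, x_{\oldstylenums{0}2})$ is controlled by \eqref{4eq: delta(v01, v02) small} with bound $\Lt \delta$. Summing and recalling $\delta = K^2/(T|\vw|) \Lt N/T$ from \eqref{2eq: x >K2/N} yields \eqref{4eq: bounds for rho}.
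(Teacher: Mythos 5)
Your proof is correct and follows essentially the same approach as the paper: apply Lemma \ref{lem: stationary phase}, evaluate the phase at the stationary point via $\phi(x_{\oldstylenums{0}}(y)) - y\,x_{\oldstylenums{0}}(y) = -\psi(y) + c$ (with the constant cancelling between $y_1$ and $y_2$), and control $\rho_{\snatural}$ and $V_{\snatural}$ using \eqref{4eq: small delta}, \eqref{6eq: v01, v02 bounded}, \eqref{4eq: small rho}, \eqref{4eq: det f''}, and Lemma \ref{lem: Bruno}. One small correction to a claim you flag as delicate: the bound \eqref{4eq: bounds for rho} does \emph{not} rest on second-order cancellation. Since the third line $\delta(x_{\oldstylenums{0}1}, x_{\oldstylenums{0}2})$ of $\rho_{\snatural}$ is itself only $O(\delta)$ by \eqref{4eq: delta(v01, v02) small}, and since first-order smallness (mean value theorem, or Lemma \ref{lem: 1-dim} together with \eqref{4eq: small rho} for derivatives) already gives the $R_i$-pieces the bound $O(\delta)$ without invoking $\phi'(x_{\oldstylenums{0}}(y)) = y$, the total is $O(\delta) = O(N/T)$ regardless. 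The sharper estimate $R_i = O(\delta^2)$ from the cancellation is correct but is first needed in Lemma \ref{lem: rho n 2}, not here. Also, a minor imprecision: $|\phi'(x_1,x_2)| \Gt 1$ away from the stationary point is too strong a claim (it fails near the stationary point); what is actually needed for Lemma \ref{lem: stationary phase} is only that $\phi'$ vanishes nowhere else, together with the uniform bound on $|x - x_{\oldstylenums{0}}| / |\phi'(x)|$, both of which follow from \eqref{4eq: f'=}, \eqref{4eq: f''=}, and \eqref{4eq: small delta}.
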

	
	\begin{proof}
		The formula \eqref{4eq: asymptotic of L} follows from direct calculations. To be precise, the stationary phase is equal to 
		\begin{align*}
			\phi ( x_{{\oldstylenums{0}}1}, x_{{\oldstylenums{0}}2} ) =   \phi (x_{{\oldstylenums{0}}1}) - y_1 x_{{\oldstylenums{0}}1}  -  \phi (x_{{\oldstylenums{0}}2}) + y_2 x_{{\oldstylenums{0}}2}  + \delta  (x_{{\oldstylenums{0}}1}, x_{{\oldstylenums{0}}2} ), 
		\end{align*} 
		and its expression of the form in \eqref{4eq: asymptotic of L} is due to \eqref{4eq: stationary point} and
		\begin{align*}
			- \psi (y) = \phi (x_{{\oldstylenums{0}}} (y)) - y  x_{{\oldstylenums{0}}} (y) + c, 
		\end{align*} 
		where $c = \log (2\pi e) / 2 \pi$ or $0$ according as  $\phi (x) = \log x / 2\pi $ or $ a x^{\shskip \beta} $.
		It is routine to prove \eqref{4eq: bounds for rho} by  the estimates in \eqref{4eq: small delta},  \eqref{6eq: v01, v02 bounded},     \eqref{4eq: small rho}, and Lemma \ref{lem: Bruno}. Recall that $\delta = O(N/T)$.  Moreover, we have
		\begin{align*}
			V_{\snatural }  (y_1, y_2 )  = \frac {  V_{\snatural} (x_{{\oldstylenums{0}}1}     (y_1, y_2 ) )  \overline{V_{\snatural}  (x_{{\oldstylenums{0}}2}     (y_1, y_2 )  ) }   }  {   \sqrt{ - \det \phi'' ( x_{{\oldstylenums{0}}1}     (y_1, y_2 ) , x_{{\oldstylenums{0}}2}      (y_1, y_2 ) ) } } .
		\end{align*} 
		Thus \eqref{4eq: bounds for V} readily follows from  \eqref{4eq: det f''}  and \eqref{6eq: v01, v02 bounded}. Finally, we remark that the
		constant implied in the error term $O \big(1/T^2 \big)$ does not depend on $y_1$ or $ y_2$ because of the uniform bounds in \eqref{4eq: det f''}. 
	\end{proof}

	\begin{lem}\label{lem: rho n 2}
		The function $\rho_{\snatural}  (y_1, y_2 )$ defined in {\rm\eqref{3eq: rho n =}} may be written as
		\begin{align}\label{6eq: rho n2}
			\rho_{\snatural}  (y_1, y_2 ) = \delta  (x_{{\oldstylenums{0}}}     (y_1  )  , x_{{\oldstylenums{0}}}     ( y_2 )  ) + \rho_{\snatural}^2  (y_1, y_2 ),
		\end{align}  
		so that $\partial_{1}^{j_1} \partial_{2}^{j_2} \rho_{\snatural}^2 (y_1, y_2 ) \Lt_{j_1, \shskip j_2}   \delta^2$.  
	\end{lem}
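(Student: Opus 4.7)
The plan is to decompose $\rho_{\snatural}(y_1, y_2)$ according to the three lines of \eqref{3eq: rho n =}, showing that the first two lines each contribute $O(\delta^2)$ together with all derivatives, while the third line contributes $\delta(x_{{\oldstylenums{0}}}(y_1), x_{{\oldstylenums{0}}}(y_2))$ plus an analogous $O(\delta^2)$ error. Since $\rho_1$, $\rho_2$, and all their partial derivatives are already controlled by $\delta$ through Lemma \ref{lem: rho small in stationary}, the proof reduces to disciplined Taylor-expansion bookkeeping using the machinery of Section \ref{sec: analytic lemmas}.

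For the first line, set $A_1(y_1, y_2) = \phi(x_{{\oldstylenums{0}}}(y_1) - \rho_1) - \phi(x_{{\oldstylenums{0}}}(y_1)) + y_1 \rho_1$. The crucial observation is that $\phi'(x_{{\oldstylenums{0}}}(y_1)) = y_1$ by the definition of $x_{{\oldstylenums{0}}}$, so $A_1$ is precisely the second-order Taylor remainder of $\phi$ at $x_{{\oldstylenums{0}}}(y_1)$ with increment $-\rho_1$. Writing this in integral form gives
\[ A_1(y_1, y_2) = \rho_1(y_1, y_2)^2 \int_0^1 (1-t)\,\phi''\bigl(x_{{\oldstylenums{0}}}(y_1) - t\rho_1(y_1, y_2)\bigr)\,\mathrm{d}t. \]
Applying $\partial_1^{j_1} \partial_2^{j_2}$ via Leibniz and Fa\`a di Bruno (Lemma \ref{lem: Bruno}), every summand carries at least two factors that are derivatives of $\rho_1$ (each $\ll \delta$ by Lemma \ref{lem: rho small in stationary}), multiplied by derivatives of $\phi''$ composed with $x_{{\oldstylenums{0}}}(y_1) - t\rho_1$, which are uniformly $O(1)$ in view of \eqref{4eq: phi' and phi''} and \eqref{6eq: v01, v02 bounded}. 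Hence $\partial_1^{j_1} \partial_2^{j_2} A_1 \ll_{j_1, j_2} \delta^2$. The second line is handled identically, with $\rho_2$, $y_2$ in place of $\rho_1$, $y_1$ and an overall sign flip.

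For the third line, let $A_3(y_1, y_2) = \delta(x_{{\oldstylenums{0}}1}(y_1, y_2), x_{{\oldstylenums{0}}2}(y_1, y_2)) - \delta(x_{{\oldstylenums{0}}}(y_1), x_{{\oldstylenums{0}}}(y_2))$. By \eqref{4eq: stationary point} the two arguments differ by $(-\rho_1, +\rho_2)$, so the fundamental theorem of calculus yields
\[ A_3 = \int_0^1 \bigl[-\rho_1 \cdot \partial_1 \delta + \rho_2 \cdot \partial_2 \delta\bigr]\bigl(x_{{\oldstylenums{0}}}(y_1) - t\rho_1,\, x_{{\oldstylenums{0}}}(y_2) + t\rho_2\bigr)\,\mathrm{d}t. \]
Each integrand pairs a factor $\rho_i \ll \delta$ with $\partial_j \delta \ll \delta$ from \eqref{4eq: small delta}, so is already $O(\delta^2)$. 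Differentiating under the integral and expanding via Lemma \ref{lem: Bruno}, every resulting summand still carries one derivative of some $\rho_i$ paired with one derivative of $\delta$; invoking Lemma \ref{lem: rho small in stationary}, \eqref{4eq: small delta}, and \eqref{6eq: v01, v02 bounded} again gives $\partial_1^{j_1} \partial_2^{j_2} A_3 \ll \delta^2$. Setting $\rho_{\snatural}^2 = A_1 + A_2 + A_3$ yields the decomposition \eqref{6eq: rho n2}.

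The only genuine obstacle is combinatorial: one must verify that after every differentiation and every application of the multivariate chain rule, each resulting term still carries at least two small factors—either two derivatives of some $\rho_i$, or one $\rho_i$ paired with one derivative of $\delta$. This is routine given the analytic lemmas of Section \ref{sec: analytic lemmas}, but demands attentive tracking of the Fa\`a di Bruno expansions.
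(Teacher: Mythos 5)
Your proof is correct and takes essentially the same route as the paper: the key cancellation $\phi'(x_{{\oldstylenums{0}}}(y_1))=y_1$ reduces the first two lines of \eqref{3eq: rho n =} to second-order Taylor remainders, and the mean-value (fundamental-theorem) argument handles the third line. The only presentational difference is that for the derivative bounds you carry out the Leibniz and Fa\`a di Bruno bookkeeping explicitly via integral remainders, whereas the paper simply appeals to Lemma \ref{lem: 1-dim, 2} and Lemma \ref{lem: 2-dim}.
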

	
	\begin{proof}
		By Taylor's theorem, the first line in \eqref{3eq: rho n =} is equal to
		\begin{align*}
			(y_1 - \phi' (x_{{\oldstylenums{0}}}    (y_1))) 
			\rho_1 (y_1, y_2) + O (\delta^2 )   ,
		\end{align*}
		and hence $ O (\delta^2 ) $ because of $  \phi' (x_{{\oldstylenums{0}}}(y)) = y$. 
		Similarly, the second line  is   $ O (\delta^2 ) $. By \eqref{4eq: small delta}, \eqref{4eq: stationary point}, and Lemma \ref{lem: rho small in stationary},  the mean value theorem  implies that the last line is  equal to
		\begin{align*}
			\delta  (x_{{\oldstylenums{0}}}     (y_1  )  , x_{{\oldstylenums{0}}}     ( y_2 )  ) + O (\delta^2). 
		\end{align*}
		It follows that $   \rho_{\snatural}^2 (y_1, y_2 ) = O (\delta^2)$. In general, Lemma \ref{lem: 1-dim, 2} and \ref{lem: 2-dim} may be exploited to prove that the derivatives of $\rho_{\snatural}^2 (y_1, y_2 )$ are   $ O (\delta^2) $. 
	\end{proof}

	\begin{lem}\label{lem: rho n 3}
		The function $\rho_{\snatural}^2  (y_1, y_2 )$   in {\rm\eqref{6eq: rho n2}} may be written as 
		\begin{align}\label{6eq: rho n3} 
			- \frac 1 2  \partial_1 \delta (x_{{\oldstylenums{0}}}     (y_1  ), x_{{\oldstylenums{0}}}     (y_2  ))^2 x_{{\oldstylenums{0}}}' (y_1)   + \frac 1 2 \partial_2 \delta (x_{{\oldstylenums{0}}}     (y_1  ), x_{{\oldstylenums{0}}}     (y_2  ))^2 x_{{\oldstylenums{0}}}' (y_2)   +  \rho_{\snatural}^3  (y_1, y_2 ), 
		\end{align}
		so that $  \partial_{1}^{j_1} \partial_{2}^{j_2} \rho_{\snatural}^3 (y_1, y_2 ) \Lt_{j_1, \shskip j_2}   \delta^3.$
	\end{lem}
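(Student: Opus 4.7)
The plan is to revisit each of the three lines of \eqref{3eq: rho n =} and push the Taylor expansion one order further than in the proof of Lemma \ref{lem: rho n 2}, then substitute the asymptotic formulae for $\rho_1,\rho_2$ from Lemma \ref{lem: rho12, rho22}, and finally simplify using the identity coming from the defining equation of $x_{{\oldstylenums{0}}}(y)$.

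First I would expand, for $i=1,2$, by Taylor's theorem to third order,
\begin{align*}
 \phi\bigl(x_{{\oldstylenums{0}}}(y_i) \mp \rho_i\bigr) - \phi(x_{{\oldstylenums{0}}}(y_i)) \pm y_i \rho_i
 &= \bigl(y_i - \phi'(x_{{\oldstylenums{0}}}(y_i))\bigr)(\pm\rho_i) + \tfrac12 \phi''(x_{{\oldstylenums{0}}}(y_i)) \rho_i^2 + O(\rho_i^3).
\end{align*}
Using $\phi'(x_{{\oldstylenums{0}}}(y))=y$, the linear piece vanishes, so the first two lines of \eqref{3eq: rho n =} contribute $\tfrac12\phi''(x_{{\oldstylenums{0}}}(y_1))\rho_1^2 - \tfrac12\phi''(x_{{\oldstylenums{0}}}(y_2))\rho_2^2$ modulo $O(\delta^3)$. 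For the third line, a two-dimensional Taylor expansion of $\delta(x_{{\oldstylenums{0}}1},x_{{\oldstylenums{0}}2})$ around $(x_{{\oldstylenums{0}}}(y_1), x_{{\oldstylenums{0}}}(y_2))$, together with \eqref{4eq: stationary point}, produces $\delta(x_{{\oldstylenums{0}}}(y_1),x_{{\oldstylenums{0}}}(y_2)) - \partial_1\delta\cdot\rho_1 + \partial_2\delta\cdot\rho_2$ modulo $O(\delta^3)$, since each derivative of $\delta$ is already $O(\delta)$ and $\rho_i=O(\delta)$. Subtracting $\delta(x_{{\oldstylenums{0}}}(y_1),x_{{\oldstylenums{0}}}(y_2))$ as in \eqref{6eq: rho n2} yields
\begin{equation*}
 \rho_{\snatural}^2(y_1,y_2) = \tfrac12\phi''(x_{{\oldstylenums{0}}}(y_1))\rho_1^2 - \tfrac12\phi''(x_{{\oldstylenums{0}}}(y_2))\rho_2^2 - \partial_1\delta\cdot\rho_1 + \partial_2\delta\cdot\rho_2 + O(\delta^3),
\end{equation*}
where all derivatives of $\delta$ are evaluated at $(x_{{\oldstylenums{0}}}(y_1),x_{{\oldstylenums{0}}}(y_2))$.

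Next I would insert the asymptotic formulae from Lemma \ref{lem: rho12, rho22}: up to an additive $O(\delta^2)$, $\rho_1=\partial_1\delta\cdot x_{{\oldstylenums{0}}}'(y_1)$ and $\rho_2=\partial_2\delta\cdot x_{{\oldstylenums{0}}}'(y_2)$, so $\rho_1^2 = (\partial_1\delta)^2 (x_{{\oldstylenums{0}}}'(y_1))^2 + O(\delta^3)$ and similarly for $\rho_2^2$, and $\partial_i\delta\cdot\rho_i = (\partial_i\delta)^2 x_{{\oldstylenums{0}}}'(y_i) + O(\delta^3)$. Differentiating $\phi'(x_{{\oldstylenums{0}}}(y))=y$ gives $\phi''(x_{{\oldstylenums{0}}}(y))(x_{{\oldstylenums{0}}}'(y))^2 = x_{{\oldstylenums{0}}}'(y)$, which collapses the quadratic pieces:
\begin{equation*}
 \rho_{\snatural}^2(y_1,y_2) = -\tfrac12 (\partial_1\delta)^2 x_{{\oldstylenums{0}}}'(y_1) + \tfrac12(\partial_2\delta)^2 x_{{\oldstylenums{0}}}'(y_2) + \rho_{\snatural}^3(y_1,y_2),
\end{equation*}
which is exactly \eqref{6eq: rho n3} at the pointwise level.

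The main obstacle is controlling \emph{derivatives} of the remainder $\rho_{\snatural}^3$. Pointwise estimates alone are not enough; I need $\partial_1^{j_1}\partial_2^{j_2}\rho_{\snatural}^3 \Lt_{j_1,j_2} \delta^3$. To this end I would prove, in the spirit of Lemma \ref{lem: 1-dim, 2} and Lemma \ref{lem: 2-dim}, a one- and two-dimensional third-order version: if $g(x)$ and $\delta(y)$ are smooth with $\delta^{(j)}\Lt_j\delta$, then $g(x+\delta(y))-g(x)-g'(x)\delta(y)-\tfrac12 g''(x)\delta(y)^2$ has all mixed partials bounded by $\delta^3$, and analogously in two dimensions. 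This follows by differentiating with Fa\`a di Bruno (Lemma \ref{lem: Bruno, 1}, Lemma \ref{lem: Bruno}) and applying Lemmas \ref{lem: 1-dim} and \ref{lem: 2-dim} to the resulting compositions. Then, as in the proof of Lemma \ref{lem: rho n 2}, I would apply this third-order lemma to each of the three expansions above, and use Lemma \ref{lem: rho small in stationary} together with Lemma \ref{lem: rho12, rho22} (whose $O(\delta^2)$ bounds do extend to all derivatives) to see that the substitution $\rho_i\mapsto\partial_i\delta\cdot x_{{\oldstylenums{0}}}'(y_i)$ costs only $O(\delta^3)$ in every partial derivative. Combining the ingredients gives the claimed differentiable bound $\partial_1^{j_1}\partial_2^{j_2}\rho_{\snatural}^3 \Lt_{j_1,j_2} \delta^3$.
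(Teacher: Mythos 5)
Your proposal follows essentially the same route as the paper: Taylor-expand each of the three lines of \eqref{3eq: rho n =} to second order, substitute the asymptotics for $\rho_1,\rho_2$ from Lemma \ref{lem: rho12, rho22}, simplify via $\phi''(x_{{\oldstylenums{0}}}(y))x_{{\oldstylenums{0}}}'(y)=1$, and then extend Lemma \ref{lem: 1-dim, 2} and Lemma \ref{lem: 2-dim} to third order for the derivative bounds. The paper's own proof likewise only sketches this last step ("it requires some work to extend Lemma \ref{lem: 1-dim, 2} and \ref{lem: 2-dim} to the next order"), so your write-up is at least as explicit as the original.
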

	
	\begin{proof}
		The proof is similar to that of Lemma \ref{lem: rho n 2}. Note that $ \phi'' (x_{{\oldstylenums{0}}}(y)) x_{{\oldstylenums{0}}}' (y) = 1 $. It follows from Taylor's theorem and Lemma \ref{lem: rho12, rho22} that  the first line in \eqref{3eq: rho n =} is equal to 
		\begin{align*}
			\frac 1 2 \phi'' (x_{{\oldstylenums{0}}}(y)) \rho_1 (y_1, y_2)^2 + O (\delta^3) =  \frac 1 2 \partial_1 \delta (x_{{\oldstylenums{0}}}     (y_1  ), x_{{\oldstylenums{0}}}     (y_2  ))^2 x_{{\oldstylenums{0}}}' (y_1)  + O (\delta^3) .
		\end{align*}
		Similarly, the second line    is equal to 
		\begin{align*}
			-  \frac 1 2 \partial_2 \delta (x_{{\oldstylenums{0}}}     (y_1  ), x_{{\oldstylenums{0}}}     (y_2  ))^2 x_{{\oldstylenums{0}}}' (y_2)  + O (\delta^3) .
		\end{align*}
		Moreover, by  Taylor's theorem and Lemma \ref{lem: rho12, rho22},  the last line is  equal to
		\begin{align*}
			\delta  (x_{{\oldstylenums{0}}}     (y_1  )  , x_{{\oldstylenums{0}}}     ( y_2 )  )  - \partial_1 \delta (x_{{\oldstylenums{0}}}     (y_1  ), x_{{\oldstylenums{0}}}     (y_2  ))^2 x_{{\oldstylenums{0}}}' (y_1) + \partial_2 \delta (x_{{\oldstylenums{0}}}     (y_1  ), x_{{\oldstylenums{0}}}     (y_2  ))^2 x_{{\oldstylenums{0}}}' (y_2) + O (\delta^3). 
		\end{align*}
		It follows that $   \rho_{\snatural}^3 (y_1, y_2 ) = O (\delta^3)$. For the general case, it   requires some work to extend Lemma \ref{lem: 1-dim, 2} and \ref{lem: 2-dim} to the next order.  
	\end{proof}

	\subsection{Proof of Proposition \ref{prop: exp sum}}
	Combining Lemma \ref{lem: range of theta} and \ref{lem: asymptotic of L}, we deduce that (see \eqref{2eq: theta1, theta2} and \eqref{2eq: S(v;n,p1,p2)}) 
	\begin{align*}
		S (\vv; n, p_1, p_2) = \frac {S_{\psi}^2  (N, T)} {T } 	 + O   \lp \frac 1 {N^2} \rp , 
	\end{align*}
	where 
	\begin{align*}
		S_{\psi}^2  (N, T) = \hskip -1pt \mathop{\mathop{\mathop{\mathop{\sum\sum}_{\varOmega_1 T p_1 /N \leqslant    r_1     \leqslant \varOmega_1' T p_1 /N }}_{\varOmega_2 T p_2 /N \leqslant    r_2     \leqslant \varOmega_2' T p_2 /N}}_{ r_1\equiv \shskip \overline{n}  p_2  (\mod p_1) }}_{r_2\equiv -\overline{n}  p_1  (\mod p_2) } \hskip -1pt e \big(  T \psi_{\snatural} (Nr_1/Tp_1, Nr_2/Tp_2)  \big)       V_{\snatural}  (Nr_1/Tp_1, N r_2/ Tp_2 )   ,
	\end{align*}
	with 
	\begin{align*}
		\psi_{\snatural} (y_1, y_2 )  =   \psi (y_2) - \psi (y_1) + \rho_{\snatural} (y_1, y_2 )  . 
	\end{align*}
	
	Finally, we have to take care of the congruence conditions on $r_1$ and $r_2$. To this end, we simply write 
	\begin{align*}
		r_1 = a_1 + p_1 m_1, \quad r_2 = a_2 + p_2 m_2,
	\end{align*}
	with representatives $a_1 \in (0, p_1)$ and $a_2 \in (0, p_2)$ such that  $
	a_1 \equiv   \shskip \overline{n}  p_2  (\mod p_1) $ and $
	a_2 \equiv   \shskip - \overline{n}  p_1  (\mod p_2) $. For brevity, denote $ \delta_1 = {N a_1} / {T p_1} $ and  $ \delta_2 = {N a_2} / {T p_2} $.  We arrive at the formula \eqref{2eq: S(p1,p2)} in Proposition  \ref{prop: exp sum} upon choosing 
	\begin{align*}
		\varOmega_1 = \varTheta/\varDelta^2 + \delta_1, \quad \varOmega_1' = \varDelta^2 \varTheta  + \delta_1, \quad \varOmega_2 = \varTheta/\varDelta^2 + \delta_2, \quad \varOmega_2' = \varDelta^2 \varTheta  + \delta_2 , 
	\end{align*}
	and letting
	\begin{align*}
		N/T \cdot	\omega  (y_1,  y_2) =   \psi   (y_2 + \delta_2)   - \psi   (y_2)    - \psi  ( y_1 + \delta_1 ) + \psi   (y_1)     + \rho_{\snatural}  (  y_1 + \delta_1,  y_2 + \delta_2 ), 
	\end{align*} 
	\begin{align*}
		V  (y_1,  y_2) = V_{\snatural} ( y_1 + \delta_1,  y_2 + \delta_2). 
	\end{align*}
	Since $ \delta_1, \delta_2 = O (N/T)  $,  \eqref{2eq: bounds for rho} and \eqref{2eq: bounds for V} respectively follow from \eqref{4eq: bounds for rho} and \eqref{4eq: bounds for V} in Lemma \ref{lem: asymptotic of L}.

	\subsection{Proof of Proposition \ref{prop: non-generic}} Let notation be as above. 
	We have 
	\begin{align*}
		g (y_1, y_2) = T \psi_{\snatural} ( Ny_1 /T  + \delta_1 , Ny_2 /T + \delta_2  ). 
	\end{align*}
	Since $\psi_{\snatural} (y_1, y_2 )  =   \psi (y_2) - \psi (y_1) + \rho_{\snatural} (y_1, y_2 )$ and $\psi (y) = b y^{q+1}$ is a polynomial of degree $q+1$ by  \eqref{2eq: defn of psi}, the problem is reduced to proving
	\begin{align}\label{6eq: psi n aymp}
		\left|	\frac{\partial^{q+2} \rho_{\snatural} (y_1, y_2 )}{ \partial y_1^{q+2 }   } \right|  \asymp_{ q }  \left\{  \begin{aligned}
			&  \delta, & & \ \text{if $q$ is odd}, \\
			&  \delta^2, & & \ \text{if $q$ is even}.
		\end{aligned}  \right.   
	\end{align}  
	To this end, we use Lemma \ref{lem: rho n 2} and \ref{lem: rho n 3} to analyze $\rho_{\snatural}  (  y_1 ,  y_2   )$.  Note that $ x_{\oldstylenums{0}} (y) = c y^q $ for $c = 1/ (a+a/q)^q$ by \eqref{4eq: v0}. By  \eqref{2eq: delta(v1, v2; v)}, we have
	\begin{align*}
		\delta  (x_{{\oldstylenums{0}}}     (y_1  )  , x_{{\oldstylenums{0}}}     ( y_2 )  )  = c	\delta    \bigg(          \frac {P^2 y_1 ^q}  {p_1^2  } -   \frac {2 P^2 y_1^{q/2}   y_2^{q/2} } {p_1 p_2} + \frac   {P^2   y_2^q } {p_2^2} \bigg)  + \frac { \sqrt{c} \delta   \vv} {K} \bigg( \frac {P y_1^{q/2} } {p_1 } - \frac {Py_2^{q/2} } {p_2 }  \bigg) ,
	\end{align*}
	if $\vw > 0$, say, and hence
	\begin{align*}
		\frac{\partial^{q+2} \rho_{\snatural}  (  y_1 ,  y_2   ) }{ \partial y_1^{q+2 }   } = - \frac {2 c P^2} {p_1 p_2} \cdot  \delta  \frac{\partial^{q+2} \big(  y_1^{q/2}   y_2^{q/2} \big) }{ \partial y_1^{q+2 }   } + O (\delta N^{\vepsilon} / K + \delta^2). 
	\end{align*}
	by  \eqref{6eq: rho n2} in Lemma \ref{lem: rho n 2}. Therefore   \eqref{6eq: psi n aymp} is clear if $q$ is odd. However, when $q$ is even,  $ \delta  (x_{{\oldstylenums{0}}}     (y_1  )  , x_{{\oldstylenums{0}}}     ( y_2 )  )$ is a polynomial of degree $q/2$, so $\partial_1^{q+2} \rho_{\snatural}   (y_1, y_2 ) = \partial_1^{q+2} \rho_{\snatural}^2  (y_1, y_2 )$. By \eqref{2eq: delta(v1, v2; v)}, we have
	\begin{align*}
		&\partial_1 \delta (x_{{\oldstylenums{0}}}     (y_1  ), x_{{\oldstylenums{0}}}     (y_2  )) = \frac {\delta P} {p_1}  \bigg( \frac {P}{p_1} - \frac {P}{p_2} \frac {y_2^{q/2}} {y_1^{q/2}}\bigg)  + \frac {\delta \vv} {2 \sqrt{c} K} \frac 1 {y_1^{q/2}} , \\
		&\partial_2 \delta (x_{{\oldstylenums{0}}}     (y_1  ), x_{{\oldstylenums{0}}}     (y_2  )) = \frac {\delta P} {p_2}  \bigg( \frac {P}{p_2} - \frac {P}{p_1} \frac {y_1^{q/2}} {y_2^{q/2}}\bigg)  - \frac {\delta \vv} {2 \sqrt{c} K} \frac 1 {y_2^{q/2}}, 
	\end{align*}
	and it follows from \eqref{6eq: rho n3} in Lemma \ref{lem: rho n 3} that 
	\begin{align*}
		\frac{\partial^{q+2}  \rho_{\snatural}^2  (y_1, y_2 ) }{ \partial y_1^{q+2 }   } = - \frac { c q P^4} {2 p_1^2 p_2^2} \cdot  \delta^2  \frac{\partial^{q+2}  (     y_2^{q } / y_1  ) }{ \partial y_1^{q+2 }   } + O (\delta^2 N^{\vepsilon} / K^2 + \delta^3 ). 
	\end{align*}
	Therefore   \eqref{6eq: psi n aymp} is also clear if $q$ is even.

	\section{The van der Corput methods for almost separable double exponential sums} \label{sec: van der Corput}
	
	The exponential sum $S_{\psi}^2 ( N, T )$    in Proposition \ref{prop: exp sum} has  phase function containing a separable main term $ T \psi (N y_2 / T ) - T \psi (N y_1 / T ) $, with $\psi (y) = \log y /2\pi$ or $b y^{\valpha}$, along with a `mixing'  error term $ N \omega (Ny_1/T, Ny_2/T) $---exponential sums of this type will be called {\it almost separable}. Note that $T = M^{\valpha}$ and $N = M^{\valpha-1}$ if we set $M = T/N$. 
	
	In this section, we shall develop two van der Corput methods for almost separable  double exponential sums. They are very much like the method for one-dimensional exponential sums, and in the end we shall reduce the problem to the one-dimensional case as the sum will become seperable after applying processes $A$ and $B$ several times. 
	
	Our first van der Corput method is relatively simple, and we obtain the $A$-process of Srinivasan \cite{Srinivasan-Lattice-3}. Our second method is analogous to the one-dimensional method but in a less user-friendly form. 
	We shall attain the $\beta$-barrier (Definition \ref{defn: theta-barrier}) at $ 1.63651...   $ by the second method, while we only have  $ 1.57554... $  by the first method. However, the second method does not always work for $\beta < 1.54461...$ ($\valpha > 2.83618...$), but the first method works as long as $\beta > 1$.

	The  double exponential sums studied in the literature are usually of monomial phases approximately of form $ A y_1^{\valpha_1} y_2^{\valpha_2} $, and technical difficulties arise because the  Hessian of the phase might be abnormally small after applying the $A$-process; see \cite[\S 2.2]{Kratzel} and \cite[\S 6]{GK-vdCorput}.  However, we shall not encounter this kind of difficulties since  in our case the phase is `almost separable', the Hessian matrix is `almost diagonal', and the domain is rectangular or `almost rectangular'.
	
	\subsection{Review of the one-dimensional van der Corput method} Our main references here are  \cite[\S 2.1]{Kratzel}, \cite[\S 3]{GK-vdCorput}, \cite[\S 5]{Huxley}, and \cite[\S \S 8.3, 8.4]{IK}. However, the reader may find that our setting is not as general as theirs, but it would enable us to simplify their notation and arguments. 
	
	Let $S_{g}^1 (M ) $ (this $g$ is not the modular form) denote an exponential sum of the type
	\begin{align*}
		S_{g}^1 (M ) =    \sum_{c \leqslant m \leqslant d} e (g (m )) , 
	\end{align*}
	where $ [c, d] \subset [\varOmega M, \varOmega' M] $ (for  fixed   $\varOmega' > \varOmega  > 0$) and the phase $g$ is in the function space $ \boldsymbol{\mathrm{F}}_1^{\gamma} ( M, T  ) $ as defined below.
	
	\begin{defn}\label{5defn: class F1(gamma)}
		Let  $ T > M > 1 $ with $T$ large. Let $\gamma$ be real.  Define	$\boldsymbol{\mathrm{F}}_1^{\gamma}  ( M, T ) $ to be the set of  real  functions $ g \in C^{\infty} [c, d]$, with $ [c, d] \subset [\varOmega M, \varOmega' M] $, of the form 
		\begin{align*}
			g (y) = T \psi (y/M),
		\end{align*}
		with 
		\begin{align*}
			\psi (y ) = \left\{ \begin{aligned}
				& b    \log y  + c  + \delta (y ), && \text{ if } \gamma = 0, \\
				& b  y ^{\gamma} + \delta (y ), && \text{ if } \gamma \neq 0,  
			\end{aligned} \right.
		\end{align*}  such that
		\begin{align*}
			\delta^{(j)} (y) \Lt_{ \gamma, j }  1/T^{\vepsilon}  
		\end{align*}
		for $\vepsilon > 0$ and every $   j = 0, 1, 2, ... $, where  $\varOmega' > \varOmega > 0$, $b$, $c$ real, with $b \neq 0$,  are considered as fixed constants. 
	\end{defn}

	\begin{defn}\label{defn: exp pair, 1}
		A pair $(\kappaup, \lambdaup) \in [0,1/2] \times [1/2, 1]$ is called a {\rm(}one-dimensional{\rm)} exponent pair if the inequality 
		\begin{align}\label{4eq: exp pair, 1}
			S^1_g (M) \Lt_{\vepsilon,   \gamma, (\kappaup, \lambdaup)}  M^{  \lambdaup -    \kappaup}  T^{ \kappaup +\vepsilon}
		\end{align}
		holds for  all $g \in \boldsymbol{\mathrm{F}}_1^{\gamma}  ( M, T  )  $ with a finite exceptions of  $\gamma$. We say that $\gamma$ is admissible for $(\kappaup, \lambdaup) $ if it is not in the finite exceptional set. 
	\end{defn} 
	
	\begin{rem}
		We remark that  $\gamma < 1$ is required in {\rm\cite{Kratzel,GK-vdCorput}} {\rm(}actually, any     $\gamma < 1$  is admissible{\rm)}, but we need to start with $\gamma > 5/2$  in our monomial setting.  
	\end{rem}
	
	\begin{rem}\label{rem: not just T > M}
		In view of {\rm\cite[(3.3.4)]{GK-vdCorput}}, the estimate in {\rm\eqref{4eq: exp pair, 1}} holds for any $T > M > 1$ if and only if   
		\begin{align}\label{4eq: exp pair, 1.1}
			S^1_g (M) \Lt_{\vepsilon,   \gamma, (\kappaup, \lambdaup)}  M^{  \lambdaup -    \kappaup}  T^{ \kappaup +\vepsilon} + M/T 
		\end{align} 
		holds	for any $T, M > 1$ {\rm(}clearly, Definition {\rm\ref{5defn: class F1(gamma)}} can be adapted in this general setting{\rm)}. 
	\end{rem}
	
	
	For example, $(1/6, 2/3) = AB (0, 1)$ is an exponent pair, and its exceptional set is $\{1, 2\}$. Moreover, $ \lp   {13} / {84},  {55} / {84} \rp $ is  Bourgain's exponent pair (\cite[Theorem 6]{Bourgain}),  
	obtained from the Bombieri--Iwaniec method along with the decoupling method. 
	For (3.19) and (4.1) in \cite{Bourgain}, in view of the conditions in  \cite[Theorem 1, 3]{Huxley-4}\footnote{It seems that the $3 F^{(4)2} + 4 F^{(3)} F^{(5)}$ in the determinant in \cite[Theorem   3]{Huxley-4} should read  $3 F^{(4)2} - 4 F^{(3)} F^{(5)}$(Huxley's $F$ is our $\psi$); otherwise, $ \gamma = \frac {5-\sqrt{97}} 4 < 1$ would be exceptional, which is certainly not true. } (see also \cite[Theorem 17.1.4, 17.4.2]{Huxley}), requiring that certain combinations of derivatives $\psi^{(j)} $  are non-vanishing, one  needs $\gamma \neq  1, 3/2, 2, 3$ and  $\gamma \neq  1,   2, 5/2, 3, 7/2, 4$, respectively. For (4.2)  in \cite{Bourgain}, Bourgain   uses the exponent pair $\lp 1/9,   {13}/ {18} \rp = ABA^2B(0, 1)$ and one only needs $\gamma \neq 1, 2, 5/2, 3 $.  Therefore $ \lp   {13} / {84},  {55} / {84} \rp $ has exceptional set  $  \{    1, 3/2, 2, 5/2, 3, 7/2, 4  \}$.


	\begin{lem}[$A$-process]
		If $(\kappaup, \lambdaup)$ is an exponent pair, then so is 
		\begin{align}
			A (\kappaup, \lambdaup) = \lp \frac {\kappaup} {2\kappaup + 2},   \frac { \kappaup + \lambdaup + 1} {2\kappaup + 2} \rp . 
		\end{align}
		Moreover, if  $\gamma \neq 1$ and $\gamma - 1 $ is admissible for $  (\kappaup, \lambdaup)$, then  so is $\gamma$ for $A (\kappaup, \lambdaup)$. 
	\end{lem}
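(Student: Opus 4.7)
The plan is to follow the classical Weyl--van der Corput $A$-process: apply a Cauchy--Schwarz inequality after shifting the argument, reduce to a family of shorter exponential sums whose phase is one ``derivative order'' lower, and then invoke the original exponent pair on those.

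First, fix an integer parameter $H$ with $1 \leqslant H \leqslant M$. By the Weyl--van der Corput shift inequality applied to $S_g^1(M) = \sum_{c \leqslant m \leqslant d} e(g(m))$, one has
\[
\bigl|S_g^1(M)\bigr|^2 \Lt \frac{M^2}{H} + \frac{M}{H} \sum_{1 \leqslant h \leqslant H} \biggl| \sum_{m \in I_h} e\bigl(G_h(m)\bigr) \biggr|,
\]
where $G_h(y) = g(y+h) - g(y)$ and $I_h \subset [c, d-h]$ is the natural shortened range. It remains to show each inner sum fits the hypothesis of the exponent pair $(\kappaup, \lambdaup)$, and then to optimize $H$.

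The structural step is to check that $G_h \in \boldsymbol{\mathrm{F}}_1^{\gamma-1}(M,\, hT/M)$ for $1 \leqslant h \leqslant H$. Writing $g(y) = T\psi(y/M)$ and Taylor-expanding $\psi((y+h)/M) - \psi(y/M)$ about $y/M$, the leading contribution is $(h/M)\psi'(y/M)$. When $\gamma \neq 0, 1$, we have $\psi'(z) = b\gamma z^{\gamma-1} + \delta'(z)$, so multiplying through by $T$ yields $G_h(y) = (hT/M)\,\Psi_h(y/M)$ with $\Psi_h(z) = b\gamma z^{\gamma-1}$ plus a smooth remainder (the cases $\gamma = 0$ and $\gamma = 1$ reduce to $\Psi_h(z) = bz^{-1}$ and $\Psi_h(z)$ constant, respectively; the latter being the reason $\gamma \neq 1$ is imposed). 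The coefficient $b\gamma$ (resp.\ $b$) is nonzero, and the Taylor remainder together with contributions coming from the original $\delta$ are $O_j(h/M + 1/T^{\vepsilon})$ on the $j$-th derivative, which is $\ll 1/(hT/M)^{\vepsilon}$ provided $H \leqslant M^{1-\vepsilon}$, so the $\delta$-class bound of Definition \ref{5defn: class F1(gamma)} is preserved relative to the new height $hT/M$. This is the technical heart of the argument, and it is the main obstacle one must attend to carefully.

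Since $\gamma - 1$ is admissible for $(\kappaup,\lambdaup)$, applying \eqref{4eq: exp pair, 1.1} (see Remark \ref{rem: not just T > M}) yields
\[
\biggl| \sum_{m \in I_h} e\bigl(G_h(m)\bigr) \biggr| \Lt M^{\lambdaup-\kappaup} (hT/M)^{\kappaup+\vepsilon} + M^{1+\vepsilon}/(hT).
\]
Summing $h^{\kappaup}$ over $1 \leqslant h \leqslant H$ contributes $\asymp H^{\kappaup+1}$, whence
\[
\bigl|S_g^1(M)\bigr|^2 \Lt \frac{M^2}{H} + M^{1+\lambdaup-2\kappaup+\vepsilon}\, T^{\kappaup}\, H^{\kappaup} + M^{2+\vepsilon}/T.
\]
Balancing the first two terms by choosing $H$ of size $(M^{1-\lambdaup+2\kappaup}/T^{\kappaup})^{1/(\kappaup+1)}$ produces
\[
\bigl|S_g^1(M)\bigr| \Lt M^{(1+\lambdaup)/(2\kappaup+2)}\, T^{\kappaup/(2\kappaup+2)+\vepsilon} + M^{1+\vepsilon}/T^{1/2}.
\]
Since $A(\kappaup,\lambdaup) = (\kappaup',\lambdaup')$ satisfies $\kappaup' = \kappaup/(2\kappaup+2)$ and $\lambdaup' - \kappaup' = (1+\lambdaup)/(2\kappaup+2)$, this is precisely the bound required for the new exponent pair. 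The edge ranges where the optimal $H$ falls outside $[1, M^{1-\vepsilon}]$ are absorbed by the trivial bound or by the $M/T$ error in the generalized form \eqref{4eq: exp pair, 1.1}.
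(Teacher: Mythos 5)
Your proof follows precisely the route the paper takes: invoke the van der Corput--Weyl shift inequality \eqref{6eq: Weyl-vdC, 1}, observe that $g(y+h)-g(y) \in \boldsymbol{\mathrm{F}}_1^{\gamma-1}(M, Th/M)$, apply the exponent pair via the generalized form \eqref{4eq: exp pair, 1.1} of Remark \ref{rem: not just T > M}, and optimize $H$ at $M^{\frac{2\kappaup-\lambdaup+1}{\kappaup+1}}/T^{\frac{\kappaup}{\kappaup+1}}$---the paper merely states these steps without carrying out the optimization, which you do correctly. (One small slip: the secondary term from \eqref{4eq: exp pair, 1.1} is $M_{\mathrm{new}}/T_{\mathrm{new}} = M^2/(hT)$ rather than $M^{1+\vepsilon}/(hT)$, but after summing over $h$ and dividing by $H$ it is still absorbed, so the conclusion stands.)
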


	This $A$-process is the so-called Weyl difference by   the van der Corput--Weyl inequality (see \cite[Theorem 2.5]{Kratzel}):
	\begin{align}\label{6eq: Weyl-vdC, 1}
		\sum_{c \leqslant m \leqslant d} e (g (m)) \Lt \frac { M } { \sqrt{H} } + \Bigg\{ \frac {M} {H} \sum_{1 \leqslant h < H}  \sum_{c \leqslant m \leqslant d-h} e (g (m+h) - g(m)) \Bigg\}^{1/2}.  
	\end{align} In the main case, we choose   $H =   M^{ \frac {2\kappaup-\lambdaup+1} {\kappaup + 1} } / T^{ \frac {\kappaup} {\kappaup + 1} }$ ($H$ is not necessarily an integer here, for one may always replace $H$ by $\lfloor H \rfloor$).  Note that  if $g (y) \in \boldsymbol{\mathrm{F}}_1^{\gamma} ( M, T  )$ then $   g(y+ h) - g (y) \in  \boldsymbol{\mathrm{F}}_1^{\gamma-1} ( M, T h / M )$.  

	\begin{lem}[$B$-process]
		If $(\kappaup, \lambdaup)$ is an exponent pair, then so is 
		\begin{align}
			B (\kappaup, \lambdaup) = \lp \lambdaup -\frac 1 2 ,       \kappaup +\frac 1 2 \rp . 
		\end{align}
		Moreover, if  $\gamma \neq 1$ and  $\gamma / (\gamma-1) $ is admissible for $   (\kappaup, \lambdaup)$, then  so is $\gamma$   for $B (\kappaup, \lambdaup)$.  
	\end{lem}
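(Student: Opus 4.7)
The plan is to run the classical Poisson-plus-stationary-phase route, and then track the Legendre transform carefully so that the resulting phase again lies in the class $\boldsymbol{\mathrm{F}}_1^{\gamma^*}$ with the correct new index $\gamma^* = \gamma/(\gamma-1)$.

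I would first set $M^* = T/M$ and apply Poisson summation to $S_g^1(M) = \sum_{c \leqslant m \leqslant d} e(g(m))$ (after inserting a smooth cutoff supported on a slight enlargement of $[c,d]$), obtaining
\begin{equation*}
S_g^1(M) = \sum_{h \in \BZ} \int e(g(y) - h y)\, W(y)\, \nd y + O (\text{boundary}).
\end{equation*}
Since $g'(y) = (T/M)\psi'(y/M)$ maps $[c,d]$ onto a bounded interval $[u_1 T/M, u_2 T/M]$, integration by parts (Lemma \ref{lem: partial integration} in one dimension, with $k$ large) shows that only $h$ in a slight enlargement of this interval---hence an interval of length $\asymp M^*$---contribute non-negligibly. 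For any such $h$ the phase $g(y)-hy$ has a unique stationary point $y_{\oldstylenums 0} = y_{\oldstylenums 0}(h)$ solving $\psi'(y_{\oldstylenums 0}/M) = hM/T$, and the one-dimensional analogue of Lemma \ref{lem: stationary phase} gives
\begin{equation*}
\int e(g(y) - h y)\, W(y)\, \nd y = \frac{W(y_{\oldstylenums 0})}{\sqrt{|g''(y_{\oldstylenums 0})|}}\, e \bigl( g(y_{\oldstylenums 0}) - h y_{\oldstylenums 0} + 1/8 \bigr) + (\text{lower order}),
\end{equation*}
with $|g''(y_{\oldstylenums 0})| \asymp T/M^2$, so the stationary-phase prefactor is $\asymp M/\sqrt{T}$.

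The heart of the argument is identifying the new phase. Writing $u = hM/T$ and $\psi^*(u) := u (\psi')^{-1}(u) - \psi((\psi')^{-1}(u))$ for the Legendre transform of the main term of $\psi$, one checks $g(y_{\oldstylenums 0}) - h y_{\oldstylenums 0} = -T \psi^*(hM/T)$. For $\psi(y) = b y^\gamma$ with $\gamma \neq 1$ a direct computation gives $\psi^*(u) = c_\gamma u^{\gamma/(\gamma-1)}$ with $c_\gamma = (b\gamma)^{-1/(\gamma-1)}(1-1/\gamma) \neq 0$; for $\psi(y) = b\log y + c$ one finds $\psi^*(u) = b\log u + c'$, corresponding to $\gamma^* = 0$. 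Thus, letting $g^*(h) := -T\psi^*(h/M^*)$, the new function lies in $\boldsymbol{\mathrm{F}}_1^{\gamma^*}(M^*,T)$ with $\gamma^* = \gamma/(\gamma-1)$, modulo a perturbation $\delta^*$ inherited from the original $\delta$; I would control $\delta^*$ and its derivatives via Lemma \ref{lem: solution in 1-dimension} (solving the perturbed stationary-point equation $\psi'(y/M) + (M/T)\delta'(y) = hM/T$) and Fa\`a di Bruno's formula (Lemma \ref{lem: Bruno, 1}).

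Granted $g^* \in \boldsymbol{\mathrm{F}}_1^{\gamma^*}(M^*, T)$ and $\gamma^*$ admissible for $(\kappaup,\lambdaup)$, the exponent pair bound yields $\bigl|\sum_h e(g^*(h))\bigr| \Lt (M^*)^{\lambdaup-\kappaup} T^{\kappaup+\vepsilon}$, and combining with the prefactor $M/\sqrt{T}$ gives
\begin{equation*}
S_g^1(M) \Lt \frac{M}{\sqrt{T}} (T/M)^{\lambdaup-\kappaup} T^{\kappaup+\vepsilon} = M^{1-(\lambdaup-\kappaup)} T^{\lambdaup - 1/2 + \vepsilon} = M^{\lambdaup^* - \kappaup^*} T^{\kappaup^* + \vepsilon},
\end{equation*}
with $(\kappaup^*,\lambdaup^*) = (\lambdaup - 1/2, \kappaup + 1/2)$; the exponent pair axioms $\kappaup^* \in [0,1/2]$ and $\lambdaup^* \in [1/2,1]$ are automatic.

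The main obstacle will be the last technical verification: that the perturbation $\delta^*$ of the transformed phase is genuinely small, i.e.\ $\partial^j \delta^* \Lt_j 1/T^\vepsilon$ with all derivatives similarly controlled. This is essentially a chain-rule computation using the implicit function theorem, but care is needed near the endpoints of the $h$-range (where $y_{\oldstylenums 0}(h)$ drifts toward the boundary of $[c,d]$) and, crucially, when $\gamma$ is close to $1$, since $c_\gamma$ degenerates there---this is precisely why $\gamma \neq 1$ must be excluded, with the linear-phase case $\gamma=1$ being handled trivially by summing a geometric series. Finally, the general formulation in Remark \ref{rem: not just T > M} is convenient, because the transformed length $M^* = T/M$ may well exceed $T$.
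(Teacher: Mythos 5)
Your proposal is correct and is essentially the same as the paper's: the paper simply cites the van der Corput transform (Kr\"atzel, Huxley) and asserts that the transformed phase lies in $\boldsymbol{\mathrm{F}}_1^{\gamma/(\gamma-1)}(T/M,T)$, whereas you unfold that cited result via Poisson summation plus one-dimensional stationary phase, and then supply the Legendre-transform computation $\psi^*(u)=c_\gamma u^{\gamma/(\gamma-1)}$ (resp.\ $b\log u + c'$) justifying the claim, with Lemma \ref{lem: solution in 1-dimension} and Fa\`a di Bruno controlling the perturbation $\delta^*$ exactly as the paper suggests. Two minor slips: your stationary-point equation should read $\psi'(y_{\oldstylenums{0}}/M)=hM/T$ with $\psi$ already containing the perturbation (rather than $\psi'(y/M)+(M/T)\delta'(y)$), and the parenthetical worry that $M^*=T/M$ might exceed $T$ is vacuous since $M>1$; neither affects the argument.
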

	
	This $B$-process follows from the van der Corput transform (see \cite[Lemma 3.6]{GK-vdCorput} and \cite[Lemma 5.5.3]{Huxley}): 
	\begin{align}
		\sum_{c \leqslant m \leqslant d} e (g (m)) =        \sum_{ a \leqslant n \leqslant b} \frac {e (f (n)  )} {  f_{\snatural} (n)  } + O \lp \frac {M} {\sqrt{T}} + \log (T/M+2) \rp,
	\end{align} 
	where   $f   $ and $f_{\snatural} $ are defined by
	\begin{align*}
		& f (x) = T \phi (Mx / T), & & \hskip -10pt f_{\snatural} (x) = \sqrt{T}/M  \cdot \phi_{\snatural} (Mx/T)  ,   \\
		& \phi (x) = \psi  ( y_{{\oldstylenums{0}}}     (x)) - x y_{{\oldstylenums{0}}}     (x), & & \hskip -10pt \phi_{\snatural} (x) = {\textstyle \sqrt{\psi'' (y_{{\oldstylenums{0}}}    (x))/ i}},  \qquad \psi' ( y_{{\oldstylenums{0}}}     (x)) = x, 
	\end{align*} 
	and  $ [a, b] =   [g' (c), g' (d) ] $ (
	it is not necessary that $g' (c) \leqslant g' (d)$). 
	It is easy to prove that if  $g (y) \in \boldsymbol{\mathrm{F}}_1^{\gamma} ( M, T  )$ then $f (x) \in \boldsymbol{\mathrm{F}}_1^{\gamma / (\gamma -1) } ( T/ M, T )$ (see Lemma \ref{lem: solution in 1-dimension}). 

	\subsection{The simple van der Corput method}\label{sec: simple vdC}
	
	Now we turn to the first simple van der Corput method.

	
	\begin{defn} 
		Let    $  T, M  > 1 $  with $T$ large.  Let $   1/ T^{\vepsilon} > \delta > 0 $.  Let $\gamma $ be real.    
		Define	$\boldsymbol{\mathrm{F}}_2^{\gamma}   (M , T , \delta ) $ to be the set of real  functions $ g \in C^{\infty} (D)$, with rectangle $D = [c_1, d_1] \times [c_2, d_2] \subset  [\varOmega M, \varOmega' M]^2$, of the form
		\begin{align}
			g (y_1, y_2) = T \psi  (y_1 /M  , y_2/M  )  , \qquad \psi (y_1, y_2) = \psi  (y_1  )  -   \psi  (y_2  ) + \rho  (y_1  , y_2  ), 
		\end{align} where $ g  (y) = T  \psi  (y/M ) \in \boldsymbol{\mathrm{F}}_1^{\gamma}  ( M , T   ) $,  and
		\begin{align}\label{4eq: bound for omega}
			\frac {\partial^{j_1+j_2} \rho (y_1, y_2)} { \partial y_1^{j_1} \partial y_2^{j_2} } \Lt_{   \gamma, j_1, j_2}  \delta ,
		\end{align}
		for all  $   j_1, j_2 = 0, 1, 2, ...  $.  
		We say that the double exponential sum 
		\begin{align*}
			S_{g}^2 (M ) =   \sum_{(m_1, \shskip m_2) \shskip \in D} e (g  (m_1, m_2))  
		\end{align*}
		is almost separable if $g \in \boldsymbol{\mathrm{F}}_2^{\gamma}   (M , T , \delta )$. 
	\end{defn}

	\begin{defn}
		\label{defn: exp pair, 2.1} 
		We say that $ (\kappaup, \lambdaup) \in [0,1/2] \times [1/2, 1]$ is a  $\delta$-exponent pair if the estimate
		\begin{align}\label{4eq: exp pair, 2.1}
			S^2_g (M ) \Lt_{\vepsilon,   \gamma, (\kappaup, \lambdaup)}  M^{  2 \lambdaup -   2  \kappaup}  T^{ 2 \kappaup +\vepsilon} 
		\end{align}
		is valid whenever  $T > M $ and $g \in \boldsymbol{\mathrm{F}}_2^{\gamma} ( M , T , \delta  )  $, with  a finite exceptions of  $\gamma$. We say that $\gamma$ is admissible for $(\kappaup, \lambdaup) $ if it is not in the finite exceptional set. 
	\end{defn}
	
	When $ \delta < 1/ T$, it follows from \eqref{4eq: bound for omega} that $$   \frac{\partial^{j_1 + j_2} e ( T   \rho   (y_1 /M , y_2/M  ))}{\partial y_1^{j_1} \partial y_2^{j_2}} \Lt \frac 1 {M^{j_1 + j_2} } $$
	for $j_1, j_2 = 0, 1$, and one may split $e ( T \rho  (y_1 /M , y_2/M  ))$ out as the weight so that the phase $T   \psi  (y_1/M )  - T   \psi  (y_2/M )$ becomes separable. By partial summation, one deduces easily the following lemma. 
	
	\begin{lem}\label{lem: reduction, separable, 1}
		In the case $ \delta < 1/T$, any one-dimensional exponent pair  is a $\delta$-exponent pair.   
	\end{lem}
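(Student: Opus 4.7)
The plan is to exploit the smallness assumption $\delta < 1/T$ so that the non-separable perturbation $T\rho(y_1/M, y_2/M)$ can be pulled out as a smooth, slowly-varying amplitude, after which a two-dimensional Abel summation reduces the estimate to a product of two one-dimensional exponent pair bounds on $\psi$.

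First I would isolate the weight $H(y_1, y_2) = e(T\rho(y_1/M, y_2/M))$ and verify that it is essentially flat on the scale $M$. A direct application of the chain rule (or Lemma \ref{lem: Bruno}) using \eqref{4eq: bound for omega} shows that each derivative of the argument $T\rho(y_1/M, y_2/M)$ is of size at most $T\delta/M^{j_1+j_2}$, and since $T\delta < 1$ by hypothesis this gives
\[
\bigg|\frac{\partial^{j_1+j_2} H(y_1, y_2)}{\partial y_1^{j_1}\partial y_2^{j_2}}\bigg| \Lt_{j_1, j_2} \frac{1}{M^{j_1+j_2}}.
\]

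Next, writing $a(m) = e(T\psi(m/M))$ with partial sums $A(x) = \sum_{c_1 \leqslant m \leqslant x} a(m)$ and $B(y) = \sum_{c_2 \leqslant m \leqslant y} \overline{a(m)}$, I would apply Abel summation successively in each of the two variables to obtain the identity
\begin{align*}
	S_g^2(M) = {} & A(d_1) B(d_2) H(d_1, d_2) - A(d_1) \int_{c_2}^{d_2} B(y) \partial_2 H(d_1, y)\, \nd y \\
	& - B(d_2) \int_{c_1}^{d_1} A(x) \partial_1 H(x, d_2)\, \nd x + \iint_D A(x) B(y) \partial_1 \partial_2 H(x, y)\, \nd x\, \nd y.
\end{align*}
Since $\pm T\psi(y/M) \in \boldsymbol{\mathrm{F}}_1^{\gamma}(M, T)$ with the same admissible $\gamma$ as in the one-dimensional pair (the defining class allows any nonzero leading coefficient), the hypothesis \eqref{4eq: exp pair, 1} delivers $|A(x)|, |B(y)| \Lt M^{\lambda - \kappa} T^{\kappa + \varepsilon}$. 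Inserting these bounds together with the derivative estimates for $H$, and observing that each factor $1/M$ produced by a derivative of $H$ is exactly compensated by the $M$-length of the corresponding integration, all four terms are of size $\Lt M^{2\lambda - 2\kappa} T^{2\kappa + \varepsilon}$, which is \eqref{4eq: exp pair, 2.1}.

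The only real obstacle is the bookkeeping in the two-dimensional partial summation identity; once the smallness of $\delta$ has been used to turn $H$ into an essentially flat amplitude, no substantive new analytic difficulty intervenes, and admissibility of $\gamma$ transfers without modification from the one-dimensional setting to the $\delta$-exponent pair setting.
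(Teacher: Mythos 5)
Your proposal is correct and follows exactly the same route the paper takes: the paper's proof is a one-line sketch — note that $\delta < 1/T$ makes $\partial_1^{j_1}\partial_2^{j_2}\, e(T\rho(y_1/M,y_2/M)) \Lt M^{-(j_1+j_2)}$ for $j_1, j_2 \in \{0,1\}$, split this weight out so the remaining phase $T\psi(y_1/M)-T\psi(y_2/M)$ separates, and apply partial summation — and your argument simply supplies the explicit two-dimensional Abel summation identity and confirms each of the four resulting terms is $\Lt M^{2\lambda-2\kappa}T^{2\kappa+\varepsilon}$. The only point worth stating explicitly (which you implicitly use) is that $-T\psi(y/M)$ also lies in $\boldsymbol{\mathrm{F}}_1^{\gamma}(M,T)$ with the same admissible $\gamma$, since the class only demands a nonzero leading coefficient; with that, the transfer of the exceptional set is immediate, as you note.
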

	
	\begin{rem}\label{rem: trivial}
		By estimating the $m_1$-sum by {\eqref{4eq: exp pair, 1}} and then the $m_2$-sum trivially, we obtain $S^2_g (M) \Lt M^{\lambdaup-\kappaup+1} T^{\kappaup+\vepsilon} $. We therefore consider $\lp \kappaup /2, (\lambdaup+1)/2   \rp$  as the trivial $\delta$-exponent pair coming from a  one-dimensional $ (\kappaup, \lambdaup)$. 
	\end{rem}
	
	As a consequence of \cite[Theorem 2.16]{Kratzel},  we have the following simple estimate:
	\begin{align}\label{6eq: S2 < M}
		S^2_g (M) \Lt \lp T + \frac {M^2} {T} \rp \log T. 
	\end{align}
	
	\begin{lem}[$A_2$-process]\label{lem: A-process, 2.1}
		Suppose that $\kappaup + 3\lambdaup \geqslant 2$. 	If $(\kappaup, \lambdaup)$ is a $\delta$-exponent pair, then so is 
		\begin{align}\label{6eq: A2}
			A_2 (\kappaup, \lambdaup) =	\lp \frac {\kappaup} {4\kappaup + 2},   \frac { 3 \kappaup +  \lambdaup +1 } {4\kappaup + 2} \rp  . 
		\end{align}
		Moreover, if  $\gamma \neq 1$ and $\gamma - 1 $ is admissible for $(\kappaup, \lambdaup)$, then  so is $\gamma$  for $A_2  (\kappaup, \lambdaup)$. 
	\end{lem}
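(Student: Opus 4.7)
The plan is to mimic the one-dimensional $A$-process by combining Cauchy--Schwarz in $m_2$ with Weyl differencing in $m_1$ and then applying the $\delta$-exponent-pair hypothesis to the resulting differenced double sums. Writing $S^2_g(M) = \sum_{m_2} T(m_2)$ with $T(m_2) = \sum_{m_1} e(g(m_1, m_2))$, Cauchy--Schwarz in $m_2$ gives $|S^2_g(M)|^2 \leq M \sum_{m_2} |T(m_2)|^2$. For each $m_2$ and an integer parameter $1 \leq H \leq M$, the squared form of van der Corput's inequality (a variant of \eqref{6eq: Weyl-vdC, 1}, cf.\ \cite[Theorem~2.5]{Kratzel}) yields
\begin{equation*}
|T(m_2)|^2 \ll \frac{M+H}{H}\biggl(M + \sum_{h=1}^{H-1}\Re T_h(m_2)\biggr),
\end{equation*}
with $T_h(m_2) := \sum_{m_1} e(g_h(m_1, m_2))$ and $g_h(y_1, y_2) := g(y_1+h, y_2) - g(y_1, y_2)$. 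Summing over $m_2$ and using $\sum_{m_2} T_h(m_2) = S^2_{g_h}(M)$, we obtain, for $H \leq M$,
\begin{equation*}
|S^2_g(M)|^2 \ll \frac{M^4}{H} + \frac{M^2}{H}\sum_{h=1}^{H-1} |S^2_{g_h}(M)|.
\end{equation*}

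Next, using Fa\`a di Bruno (Lemma~\ref{lem: Bruno, 1}) together with Lemma~\ref{lem: 1-dim} applied to the $\rho$-difference, one verifies that
\begin{equation*}
g_h(y_1, y_2) = (Th/M)\bigl(\tilde\psi(y_1/M) + \tilde\rho(y_1/M, y_2/M)\bigr),
\end{equation*}
where $\tilde\psi$ lies in $\boldsymbol{\mathrm{F}}_1^{\gamma-1}(M, Th/M)$ (matching $\psi'$ to leading order) and $\tilde\rho$ has derivatives of size $O(\delta)$ (inheriting the smallness of $\rho$). Hence $g_h$ belongs to $\boldsymbol{\mathrm{F}}_2^{\gamma-1}(M, Th/M, \delta)$, so the $\delta$-exponent-pair hypothesis for $(\kappaup, \lambdaup)$ (Definition~\ref{defn: exp pair, 2.1}) gives $|S^2_{g_h}(M)| \ll M^{2\lambdaup - 2\kappaup}(Th/M)^{2\kappaup + \vepsilon}$. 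Inserting this and summing via $\sum_{h=1}^{H-1} h^{2\kappaup} \ll H^{2\kappaup + 1}$ leads to
\begin{equation*}
|S^2_g(M)|^2 \ll \frac{M^4}{H} + M^{2 + 2\lambdaup - 4\kappaup}\, T^{2\kappaup}\, H^{2\kappaup + \vepsilon}.
\end{equation*}
Balancing with $H = \lfloor (M^{2 + 4\kappaup - 2\lambdaup}/T^{2\kappaup})^{1/(2\kappaup + 1)} \rfloor$, which lies in the admissible range $[1, M]$ under the hypothesis $\kappaup + 3\lambdaup \geq 2$ together with $T > M$, yields $|S^2_g(M)|^2 \ll M^{(4\kappaup + 2\lambdaup + 2)/(2\kappaup + 1)} T^{2\kappaup/(2\kappaup + 1) + \vepsilon}$, which is precisely $M^{2\lambdaup' - 2\kappaup'}\, T^{2\kappaup' + \vepsilon}$ for $(\kappaup', \lambdaup') = A_2(\kappaup, \lambdaup)$, as required. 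The admissibility clause is inherited from the exponent shift $\gamma \mapsto \gamma - 1$ produced by differencing, exactly as in the one-dimensional $A$-process.

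The main technical obstacle is the rigorous confirmation that $g_h$ truly falls in $\boldsymbol{\mathrm{F}}_2^{\gamma-1}(M, Th/M, \delta)$: because differencing in $m_1$ eliminates the $y_2$-pure part of $\psi$, the phase $g_h$ is only ``half-separable''---its $y_2$-dependence arises entirely from the small mixing term $\tilde\rho$, with no pure $\tilde\psi(y_2)$ summand. Strict compliance with the class definition therefore requires either a mild extension allowing a trivial $y_2$-pure component (with the $\delta$-exponent-pair bound re-verified in that degenerate configuration) or a direct argument showing that the bound persists under such asymmetry. Apart from this single technical point, the remaining work is the standard Weyl-differencing bookkeeping and the algebraic optimization in $H$ sketched above.
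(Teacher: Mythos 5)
The structure of your argument (Weyl differencing, invoking the $\delta$-exponent-pair hypothesis on the differenced sum, balancing $H$) mirrors the paper's, but the crucial detail of \emph{how} to difference is wrong, and the ``technical obstacle'' you flag at the end is not a minor gap that a mild extension of the class can absorb --- it is fatal to the approach. Differencing only in $m_1$ with shift $(h,0)$ produces a phase whose $y_2$-pure part $\psi(y_2)$ disappears entirely; what remains is the small mixing term $\tilde\rho$. Such a ``half-separable'' phase cannot be placed in $\boldsymbol{\mathrm{F}}_2^{\gamma-1}(M, Th/M, \delta)$, and more importantly, the two-dimensional bound $M^{2\lambdaup-2\kappaup}(Th/M)^{2\kappaup+\vepsilon}$ is simply false for such sums. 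Indeed, take $\rho\equiv 0$: then $g_h$ is constant in $m_2$, the $m_2$-sum trivially contributes a factor $\asymp M$, and the best one can hope for is $|S^2_{g_h}|\ll M\cdot M^{\lambdaup-\kappaup}(Th/M)^{\kappaup+\vepsilon}$. Since $1+\kappaup-\lambdaup\geqslant \kappaup\geqslant 0$, this exceeds the desired $M^{2\lambdaup-2\kappaup}(Th/M)^{2\kappaup}$ whenever $Th/M\ll M$, which is precisely the regime of small $h$ dominating your $h$-sum. So neither escape route you propose can close the gap.

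The paper avoids this entirely by differencing along the diagonal, i.e.\ replacing $g(m_1,m_2)$ by $g(m_1+h, m_2+h)-g(m_1,m_2)$. Because the class phase has the symmetric form $\psi(y_1)-\psi(y_2)+\rho(y_1,y_2)$, the diagonal shift produces $[\psi(y_1+h/M)-\psi(y_1)] - [\psi(y_2+h/M)-\psi(y_2)] + [\text{difference of }\rho]$, in which \emph{both} one-variable pieces are genuine Weyl differences of $\psi$ lying in $\boldsymbol{\mathrm{F}}_1^{\gamma-1}$, so the differenced phase does land in $\boldsymbol{\mathrm{F}}_2^{\gamma-1}(M, Th/M, \delta)$ as needed, and the hypothesis applies. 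The resulting two-dimensional van der Corput inequality reads $S^2_g(M)\ll M^2/\sqrt{H} + \{ (M^2/H)\sum_{1\leqslant h<H}|S^2_g(M;h)|\}^{1/2}$, and after splitting the $h$-sum at $M^2/T$ (using the crude bound $S^2_g\ll(T+M^2/T)\log T$ for small $h$) the optimization in $H$ proceeds as in your sketch. You should also note the paper handles the edge cases ($H>d_1-c_1$, and $T\leqslant M^{5/3}$) using the hypothesis $\kappaup+3\lambdaup\geqslant 2$, which your write-up glosses over but which is where that condition actually enters.
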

	
	\begin{proof} 
		
		
		By symmetry, we may assume that  $d_1 - c_1 \leqslant d_2 - c_2$. 	Similar to \eqref{6eq: Weyl-vdC, 1},  for $1 \leqslant H \leqslant   d_1-c_1  $ 
		we have 
		\begin{align}\label{6eq: Weyl-vdC, 2.1}
			S^2_g (M) \Lt \frac {M^2} {\sqrt{H}} + \Bigg\{ \frac {M^2} {H} \sum_{1 \leqslant h < H} \left| S^2_g (M; h) \right|  \Bigg\}^{1/2},
		\end{align}
		where 
		\begin{align*}
			S^2_g (M; h)  =  \sum_{(m_1, \shskip m_2) \shskip \in D(h)}  e (g (m_1+h, m_2+h) - g(m_1, m_2)), 
		\end{align*}
		and $D(h) = D \cap (D - (h, h))$. 
		We have $g (y_1+h, y_2+h) - g(y_1, y_2) \in \boldsymbol{\mathrm{F}}_2^{\gamma-1}   (M , T h/M , \delta ) $ if $g(y_1, y_2) \in \boldsymbol{\mathrm{F}}_2^{\gamma}   (M , T , \delta )$. 
		We now split the $h$-sum in \eqref{6eq: Weyl-vdC, 2.1} according to  $ h \leqslant M^2 / T $ and $h > M^2/T$.  In the first case, \eqref{6eq: S2 < M} yields
		\begin{align*}
			\sum_{1\leqslant h \leqslant M^2/T} \left| S^2_g (M; h) \right|  \Lt \sum_{1\leqslant h \leqslant M^2/T} \frac {M^3} {T h} \Lt \frac {M^3 \log M} {T} < M^2 \log M .
		\end{align*}
		In the second case, by applying \eqref{4eq: exp pair, 2.1}  to $S^2_g (M; h)$  we get 
		\begin{align*}
			\sum_{M^2/T < h < H} \left| S^2_g (M; h) \right| \Lt \sum_{M^2/T < h < H} M^{  2 \lambdaup -   2  \kappaup}  (T h /M)^{ 2 \kappaup +\vepsilon} \Lt H^{2 \kappaup + 1} M^{2 \lambdaup -   4  \kappaup} T^{ 2 \kappaup +\vepsilon}. 
		\end{align*}
		Substituting these into \eqref{6eq: Weyl-vdC, 2.1}, we have
		\begin{align*}
			S^2_g (M) \Lt \frac {M^{2+\vepsilon}} {\sqrt{H}} + H^{  \kappaup } M^{  \lambdaup -   2  \kappaup + 1} T^{   \kappaup + \vepsilon} .
		\end{align*}
		We attain the desired bound on choosing $H  = M^{\frac {4\kappaup - 2\lambdaup + 2 } {2\kappaup + 1}} / T^{\frac {2\kappaup} { 2\kappaup + 1}}    $ provided that it does not exceed $d_1 - c_1$. 
		Otherwise, we have  $
		S^2_g (M) \Lt   {M^{2+\vepsilon}} / {\sqrt{d_1 - c_1}}  $ and also the trivial bound $ S^2_g (M) \Lt M (d_1 -c_1) $, so   $$ S^2_g (M) \Lt M^{\frac 5 3 + \vepsilon} = M^{1 + \frac {\lambdaup} {2\kappa+1} } M^{\frac {4\kappaup - 3 \lambdaup + 2} {3 (2\kappaup + 1)} + \vepsilon } \leqslant M^{1 + \frac {\lambdaup} {2\kappa+1} } M^{\frac {5 \kappaup} {3 (2\kappaup + 1)} + \vepsilon }, $$  
		where   $\kappaup + 3\lambdaup \geqslant 2$ is used for the last inequality, and our result follows  if $T > M^{5/3}$. 
		Finally, if $T \leqslant M^{5/3}$, then by \eqref{6eq: S2 < M} we have
		\begin{align*}
			S^2_g (M) \Lt T^{1+\vepsilon} = T^{\frac {  \kappaup + 1} { 2\kappaup + 1 }} T^{\frac {  \kappaup} { 2\kappaup + 1 }+\vepsilon} \leqslant M^{\frac { 5 \kappaup + 5} {3 (2\kappaup + 1) }} T^{\frac {  \kappaup} { 2\kappaup + 1 } +\vepsilon}   \leqslant M^{\frac {  2 \kappaup + \lambdaup + 1} { 2\kappaup + 1  }} T^{\frac {  \kappaup} { 2\kappaup + 1 } +\vepsilon}  ,
		\end{align*}
		where   $\kappaup + 3\lambdaup \geqslant 2$ is used again for the last inequality. 
	\end{proof}
	
	\begin{cor}\label{cor: Aq, 2.1}
		Let $q$ be a positive integer. Set $Q = 2^q$.
		
		{\rm(1)} We have 
		\begin{align}\label{6eq: Aq}
			A_2^q (\kappaup, \lambdaup) = \lp \frac {\kappaup} { 4 (Q-1) \kappaup + Q}, 1 - \frac {q \kappaup -\lambdaup + 1 } {  4 (Q-1) \kappaup + Q} \rp .
		\end{align} 
		
		{\rm(2)} Let $T > M^{q}$.    Define 
		\begin{align}
			H_q = M^{\frac {4 (Q-1) ((q+1) \kappaup -\lambdaup + 1)} { 4 (Q-1) \kappaup + Q }  }  / T^{\frac { 4 (Q-1) \kappaup } { 4   (Q-1) \kappaup + Q} }. 
		\end{align} 
		For $\kappaup + 3\lambdaup \geqslant 2$, in order for $ A_2^q (\kappaup, \lambdaup) $ to be a $\delta$-exponent pair,  it suffices that the estimate in \eqref{4eq: exp pair, 2.1} is valid for any $ \boldsymbol{\mathrm{F}}_2^{\gamma-q}   (M , T h/M^q , \delta ) $ with  $1 \leqslant h < H_q$, $M^{q+1} < T h $, $\gamma \neq 1, 2, ..., q$, and $\gamma - q$ admissible for $(\kappaup, \lambdaup)$.
	\end{cor}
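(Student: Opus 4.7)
\emph{Approach.} The corollary splits into a purely algebraic identity (Part (1)) and an inductive sharpening of the admissibility hypothesis (Part (2)). For Part (1) the plan is an induction on $q$. The base case $q=1$ reduces to \eqref{6eq: A2} by direct substitution: with $Q=2$ one has $\kappa/(4(Q-1)\kappa+Q) = \kappa/(4\kappa+2)$ and $1-(\kappaup-\lambdaup+1)/(4\kappaup+2) = (3\kappaup+\lambdaup+1)/(4\kappaup+2)$. For the inductive step, writing $(\kappaup_q,\lambdaup_q)$ for the expected value of $A_2^q(\kappaup,\lambdaup)$, I would simplify $4\kappaup_q+2 = 2(4(2Q-1)\kappaup+2Q)/(4(Q-1)\kappaup+Q)$ and then read off the two components of $A_2(\kappaup_q,\lambdaup_q)$ against the target formula with $q \mapsto q+1$ and $Q \mapsto 2Q$. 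This is routine but requires a careful common-denominator computation.

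\textbf{Part (2): iterate the $A_2$-process.} For Part (2) the plan is to apply the Weyl--van der Corput inequality \eqref{6eq: Weyl-vdC, 2.1} a total of $q$ times in succession, following the structure of the proof of Lemma \ref{lem: A-process, 2.1}. Starting from $g \in \boldsymbol{\mathrm{F}}_2^{\gamma}(M,T,\delta)$, at the $j$-th application one introduces a new shift $h_j$ bounded by a parameter $H^{(j)}$, producing the difference
\[ g^{(j)}(y_1,y_2) = g^{(j-1)}(y_1+h_j,y_2+h_j) - g^{(j-1)}(y_1,y_2) \in \boldsymbol{\mathrm{F}}_2^{\gamma-j}\bigl(M,\, Th_1\cdots h_j/M^j,\, \delta\bigr). \]
At each layer the $h_j$-sum is split at the threshold where $Th_1\cdots h_j/M^j$ drops below $M$: below this threshold one uses the trivial bound \eqref{6eq: S2 < M} giving a contribution of order $M^{2+\vepsilon}$, and above it one uses the next iterate of the inductive hypothesis. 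After $q$ layers what remains is a bound on $S^2_{g^{(q)}}(M)$ with $g^{(q)} \in \boldsymbol{\mathrm{F}}_2^{\gamma-q}(M, Th/M^q,\delta)$, where the combined $h = h_1 h_2 \cdots h_q$ plays the role of the effective single shift. The condition $M^{q+1} < Th$ is precisely the statement that this final layer sits in the ``main case'' regime (corresponding to $M^2/T < h$ in the single-step proof of Lemma \ref{lem: A-process, 2.1}). The exceptional set $\gamma \in \{1,2,\dots,q\}$ appears because layer $j$ requires $\gamma - (j-1) \neq 1$ in order to invoke the $B$/$A$-admissibility clause of Lemma \ref{lem: A-process, 2.1}.

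\textbf{Determining $H_q$ and the main obstacle.} The stated formula for $H_q$ will emerge from the balancing at the end of the iteration: the final bound reads $M^2/H_q^{1/2}$ against $H_q^{\kappaup} M^{\lambdaup-2\kappaup+1} T^{\kappaup+\vepsilon}$, reweighted by the factors of $M^{2\lambdaup_{q-j}-2\kappaup_{q-j}}$ accumulated at the previous layers, whose explicit shape is supplied by Part (1). Equating the two competing terms and solving for $H_q$ yields the formula in the statement. The main obstacle I expect is purely bookkeeping: tracking the individual $H^{(j)}$'s so that their combined range is exactly $H_q$, verifying that the $q$-fold nested Cauchy--Schwarz does not distort the exponents, and handling the two fallback situations at each layer (when $H^{(j)}$ exceeds the length of the inner sum, or when $T$ is too small). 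As in the one-step proof, these fallbacks require both the hypothesis $\kappaup + 3\lambdaup \geqslant 2$ and the assumption $T > M^q$ to produce auxiliary estimates consistent with the target $A_2^q(\kappaup,\lambdaup)$-bound. None of this is conceptually difficult, but each layer must be checked for uniformity of the implicit constants.
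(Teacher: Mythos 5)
Your proposal is correct and follows essentially the same route as the paper, which gives only a terse sketch ("the statement in (2) is clear from the Weyl differencing step in the proof of Lemma \ref{lem: A-process, 2.1}. It is easy to prove the results for general $q$ by induction"). Your Part (1) induction is exactly what is needed — the key algebraic observation is that $4\kappaup_q + 2 = D_{q+1}/D_q$ where $D_q = 4(Q-1)\kappaup + Q$, and the rest cancels cleanly — and your Part (2) correctly identifies the $q$-fold Weyl differencing with shifts $h_1,\dots,h_q$, the accumulation of the $T$-parameter to $Th_1\cdots h_q/M^q$, the interpretation of $M^{q+1} < Th$ as the "main case" threshold at the final layer, and the source of the exceptional set $\{1,\dots,q\}$. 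The only item worth stating more explicitly than you do is that the maximum value of $h_1\cdots h_q$ over the admissible ranges $1 \leqslant h_j < H^{(j)}$ (where $H^{(j)}$ depends on the accumulated $h_1\cdots h_{j-1}$) equals $H_q$ as given; you frame this as "balancing at the end", but it is really a telescoping identity across all $q$ layers, verifiable by the same induction that proves Part (1).
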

	
	When $q = 1$, \eqref{6eq: Aq} is reduced to \eqref{6eq: A2}, while the statement in (2) is clear from the Weyl differencing step in the proof of Lemma \ref{lem: A-process, 2.1}. It is easy to prove the results for general $q$ by induction. The reader may also find   \eqref{6eq: Aq}  in \cite[Theorem 7]{Srinivasan-Lattice-3}.
	
	

	\begin{lem}[$B$-process]\label{lem: B-process, 2.1}
		Let $\delta < M/T$.  Suppose that  $3 \kappaup + \lambdaup \geqslant 1$ and $ \kappaup + 3 \lambdaup \geqslant 2$.	 If $(\kappaup, \lambdaup)$ is a $\delta$-exponent pair, then so is  $
		B (\kappaup, \lambdaup)$. 
		Moreover, if  $\gamma \neq 1$ and  $\gamma / (\gamma-1) $ is admissible for $   (\kappaup, \lambdaup)$, then so is $\gamma$   for $B (\kappaup, \lambdaup)$.  
	\end{lem}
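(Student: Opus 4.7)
The plan is to imitate the one-dimensional van der Corput $B$-process by applying two-dimensional Poisson summation followed by stationary phase, exploiting the analysis of \S\S\ref{sec: analytic lemmas}--\ref{sec: analysis of stationary point} to treat the almost separable structure. Writing $g(y_1,y_2) = T\psi(y_1/M)-T\psi(y_2/M)+T\rho(y_1/M,y_2/M)$ with $\rho$ and its derivatives of size $O(\delta)$, Poisson summation in both variables yields
\[
S^2_g(M) = \sum_{(n_1,n_2)\in\BZ^2}\iint_D e\bigl(g(y_1,y_2)-n_1 y_1-n_2 y_2\bigr)\,\nd y_1\,\nd y_2,
\]
and Lemma \ref{lem: partial integration}, together with a smooth truncation to treat the boundary of $D$, restricts the dual sum to the range where the integrand has a stationary point, namely to $n_1,n_2$ in intervals of length $\asymp T/M$.

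I would next evaluate each surviving integral by the two-dimensional stationary phase lemma (Lemma \ref{lem: stationary phase}). The stationary point equations $\psi'(y_i/M)=\pm n_iM/T-\partial_i\rho(\cdot)$ are exactly of the form \eqref{4eq: equations}, so Lemma \ref{lem: rho small} gives the solution as an $O(M\delta)$-perturbation of $(Mx_{\oldstylenums{0}}(n_1M/T),\,Mx_{\oldstylenums{0}}(-n_2M/T))$, and Lemmas \ref{lem: rho n 2}--\ref{lem: rho n 3} show that the stationary value takes the shape
\[
-\psi^\ast(n_1M/T)+\psi^\ast(-n_2M/T)+\rho^\ast(n_1M/T,-n_2M/T),
\]
where $\psi^\ast$ is the Legendre transform of the one-dimensional $\psi$ (so the separable part is precisely the $B$-process output $\boldsymbol{\mathrm{F}}_1^{\gamma}(M,T)\to\boldsymbol{\mathrm{F}}_1^{\gamma/(\gamma-1)}(T/M,T)$), and the new mixing term $\rho^\ast$ together with its derivatives is again $O(\delta)$. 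After changing variables $u_i=y_i/M$, the two-dimensional stationary phase amplitude $1/(\lambdaup\sqrt{-\det f''})$ with $\lambdaup=T$ produces a global factor $M^2/T$.

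Collecting main terms yields
\[
S^2_g(M) = \frac{M^2}{T}\,S^2_{g^\ast}(T/M) + (\text{lower order}),
\]
where $g^\ast\in\boldsymbol{\mathrm{F}}_2^{\gamma/(\gamma-1)}(T/M,\,T,\,\delta')$ with $\delta'\Lt\delta$. Applying the $\delta$-exponent pair hypothesis for $(\kappaup,\lambdaup)$ then gives $S^2_{g^\ast}(T/M)\Lt(T/M)^{2\lambdaup-2\kappaup}T^{2\kappaup+\vepsilon}$, and multiplication by $M^2/T$ delivers $S^2_g(M)\Lt M^{2(\kappaup-\lambdaup)+2}T^{2\lambdaup-1+\vepsilon}$, which is precisely the bound required for the exponent pair $B(\kappaup,\lambdaup)=(\lambdaup-1/2,\kappaup+1/2)$.

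The main technical difficulty is twofold. First, one must verify that $\rho^\ast$ satisfies the derivative bounds of class $\boldsymbol{\mathrm{F}}_2^{\gamma/(\gamma-1)}$ with $\delta'\Lt\delta$; the hypothesis $\delta<M/T$ is exactly what is needed to ensure that the higher-order Taylor remainders in Lemmas \ref{lem: rho n 2}--\ref{lem: rho n 3}, after being multiplied by $T$, remain acceptably small so that the $B$-process does not degrade the mixing size. Second, the error contributions---boundary terms, small-frequency dual terms without a stationary point, and the trivial-bound regime in which $T$ is not much larger than $M$---must be absorbed into the stated estimate. This last point is where the side conditions $3\kappaup+\lambdaup\geqslant 1$ and $\kappaup+3\lambdaup\geqslant 2$ enter, playing the same role as in the $A_2$-process of Lemma \ref{lem: A-process, 2.1}. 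Admissibility of $\gamma/(\gamma-1)$ and the evolution of the exceptional set are then inherited directly from the one-dimensional $B$-process applied to $\psi$.
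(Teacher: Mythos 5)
Your high-level strategy---two-dimensional Poisson summation followed by stationary phase, with the transformed phase lying in $\boldsymbol{\mathrm{F}}_2^{\gamma/(\gamma-1)}(T/M,T,\delta)$ and the factor $M^2/T$ coming from the Hessian amplitude---is exactly what the two-dimensional van der Corput transform (Kr\"atzel, Theorem~2.24) does, and the paper cites that transform and reduces to the same target bound $M^{2\kappaup-2\lambdaup+2}T^{2\lambdaup-1+\vepsilon}$ as you do. However, you misattribute the role of the hypothesis $\delta < M/T$, and you gloss over the one issue for which it is actually needed. The transformed mixing term remains of size $\delta$ unconditionally; this is the content of Lemma~\ref{lem: asymptotic of L} (via Lemmas~\ref{lem: rho small},~\ref{lem: rho2}), which does not use $\delta < M/T$. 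What $\delta < M/T$ actually controls is the \emph{domain} of the dual sum: the stationary-point map sends $D$ onto a region $E$ that is only \emph{almost} rectangular, its boundary deviating from the unperturbed rectangle by $\Lt (T/M)\delta$ in each variable. Since the transformed sum carries the smooth amplitude $1/f_{\snatural}(n_1,n_2)$ of size $\asymp M^2/T$, one must regularize $E$ to a genuine rectangle before partial summation can peel that weight off; the rounding error is then $\Lt (M^2/T)\cdot (T/M)\bigl(\delta T/M + 1\bigr) = \delta T + M$, and $\delta < M/T$ is precisely what makes this $O(M)$, which is dominated by the main term because $M < M^{2\kappaup-2\lambdaup+2}T^{2\lambdaup-1}$ for $T>M$. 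Your proposal neither mentions the non-rectangular image domain nor the resulting need for regularization plus partial summation, and it omits the bookkeeping of the $O\bigl((M^2/\sqrt{T}+\sqrt{T})\log T\bigr)$ error from the van der Corput transform itself, which is where the side conditions $3\kappaup+\lambdaup\geqslant 1$ (for $T<M^2$, via the trivial bound \eqref{6eq: S2 < M}) and $\kappaup+3\lambdaup\geqslant 2$ (for $T\geqslant M^2$, via the trivial bound $M^2$) are each used, in separate ranges of $T$; your attribution of these conditions to "the trivial-bound regime" is directionally right but leaves the two cases and thresholds unexamined.
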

	
	
	\begin{proof}
	By applying the two-dimensional van der Corput transform as in \cite[Theorem 2.24]{Kratzel} in our setting, we have
		\begin{align}\label{6eq: Poisson}
			\begin{aligned}
				\sum_{(m_1, \shskip m_2) \shskip \in D} e (g  (m_1, m_2)) =    & \sum_{(n_1, \shskip n_2) \shskip \in E} \frac{e (f  (n_1, n_2))} { {f_{\snatural} (n_1, n_2)}}   + O \bigg( \bigg( \frac {M^2}{\sqrt{T}} +   \sqrt{T}  \bigg) \log T   \bigg), 
			\end{aligned}
		\end{align}
		where  $f$ and $f_{\snatural}$ are defined by 
		\begin{align}
		\label{6eq: f =, simple}	&	f (x_1, x_2) = T \phi  (M x_1/T  , M x_2/T )  , \quad f_{\snatural} (x_1, x_2) = T /M^2 \cdot \phi_{\snatural} (M x_1/T  , M x_2/T ) , \\
			& \phi  (x_1, x_2) = \psi (y_{{\oldstylenums{0}}1}) -   x_1 y_{{\oldstylenums{0}}1} - \psi (y_{{\oldstylenums{0}}2}) +  x_2  y_{{\oldstylenums{0}}2} + \rho (y_{{\oldstylenums{0}}1}, y_{{\oldstylenums{0}}2}) , \\
			&  \phi_{\snatural} (x_1, x_2) = {\textstyle \sqrt{- \det \psi'' (y_{{\oldstylenums{0}}1}, y_{{\oldstylenums{0}}2})}},     \\
		\label{6eq: psi =, simple}	&	\psi' (y_{{\oldstylenums{0}}1})    =   x_1 -  \partial_1 \rho (y_{{\oldstylenums{0}}1}    , y_{{\oldstylenums{0}}2}    ), \qquad  \psi' (y_{{\oldstylenums{0}}2}    )      =   x_2 + \partial_2 \rho (y_{{\oldstylenums{0}}1}    , y_{{\oldstylenums{0}}2}     ), 
		\end{align} 
		and  $E$ is the image of $D$ under the map 
		\begin{align*}
			\begin{aligned}
				x_1 =     {T } /{M } \cdot & \lp \psi '  (   {y_1} / {M }  )    +   \partial_1 \rho  (   {y_1} / {M },   {y_2} / {M }  ) \rp, \\
				x_2   =   {T } / {M } \cdot & \lp \psi '  (   {y_2} / {M }  ) -    \partial_2 \rho  (   {y_1} / {M },   {y_2} / {M }  ) \rp .
			\end{aligned} 	
		\end{align*}
		Let $ g \in \boldsymbol{\mathrm{F}}_2^{\gamma}   (M , T , \delta )  $.  Then the same arguments in the proof of Lemma \ref{lem: asymptotic of L} may be applied here to verify that $f \in \boldsymbol{\mathrm{F}}_2^{\gamma/(\gamma -1)}   (T/M , T , \delta )$ and that
		\begin{align*}
			\frac {\partial^{j_1+j_2}} {\partial x_1^{j_1} \partial x_2^{j_2}} \frac 1 { f_{\snatural} (x_1, x_2) } \Lt \frac {M^2} {T} \cdot \frac 1 {(T/M)^{j_1+j_2}} 
		\end{align*}
		for $j_1, j_2 = 0, 1$. Moreover, the domain $E$ is `almost rectangular'---if     $E$ is regularized into the rectangular image of $D$ under the map 
		\begin{align*}
			\begin{aligned}
				x_1 =     {T } /{M } \cdot   \psi '  (   {y_1} / {M }  ), \qquad 
				x_2   =   {T } / {M } \cdot     \psi '  (   {y_2} / {M }  ) ,
			\end{aligned} 	
		\end{align*}  
	  the rounding error is trivially $O ( M^2/T \cdot T/M  ( \delta T/M + 1 ) ) = O (M)$  by our assumption   $ \delta < M/T$. Note that $M <   M^{2\kappaup -2 \lambdaup + 2} T^{2 \lambdaup - 1} $ for $M <T$. 
		On applying  partial summation on the rectangle, along with the bound in \eqref{4eq: exp pair, 2.1}, the sum on the right-hand side of \eqref{6eq: Poisson} is bounded by $$ \Lt \frac {M^2} {T} \lp \frac T M \rp^{2\lambdaup - 2\kappaup} T^{2\kappaup + \vepsilon} = M^{2\kappaup -2 \lambdaup + 2} T^{2 \lambdaup - 1 + \vepsilon}. $$ 
		It is left to consider the error terms in  \eqref{6eq: Poisson}. 
		In the case $T < M^2$, we have $\sqrt{T} < M^2/\sqrt{T}$, and     
		\begin{align*}
			\frac {M^{2} \log T }{\sqrt{T}} < M^{2\kappaup -2 \lambdaup + 2} T^{2 \lambdaup - 1 + \vepsilon }   
		\end{align*}
		if $ T \geqslant M^{  \frac {4 \lambdaup -4\kappaup} {4 \lambdaup-1} } $, while by \eqref{6eq: S2 < M}, we have 
		\begin{align*}
			S^2_g (M) \Lt   T^{1+\vepsilon} = T^{2 - 2 \lambdaup} T^{2 \lambdaup - 1 + \vepsilon } < M^{ \frac { 2   - 2 \lambdaup -6 \kappaup } {4 \lambdaup-1}}  \cdot M^{  2 \kappaup - 2 \lambdaup + 2} T^{2 \lambdaup - 1 + \vepsilon }  
		\end{align*} 
		for $M < T < M^{  \frac {4 \lambdaup -4\kappaup} {4 \lambdaup-1} }  $, and hence the desired bound by $3 \kappaup + \lambdaup \geqslant 1$.  In the case $ T \geqslant M^2$, the error term is dominated by$    \sqrt{T} \log T$, with  
		\begin{align*}
			\sqrt{T} \log T <  M^{2\kappaup -2 \lambdaup + 2} T^{2 \lambdaup - 1 + \vepsilon }
		\end{align*}
		if $ M \geqslant T^{\frac {3 - 4 \lambdaup  } { 4( \kappaup -  \lambdaup + 1) } }$, and we have trivially
		\begin{align*}
			S^2_g (M) \Lt   M^2 = M^{2\kappaup -2 \lambdaup + 2 } M^{  2 \lambdaup - 2\kappaup } < T^{\frac {2 -   \kappaup - 3 \lambdaup } {\kappaup -  \lambdaup + 1}  }  \cdot M^{2\kappaup -2 \lambdaup + 2 } T^{  2 \lambdaup -1}    
		\end{align*}
		if $ M < T^{\frac {3 - 4 \lambdaup  } { 4( \kappaup -  \lambdaup + 1) } }$, and  hence the desired bound by  $ \kappaup + 3 \lambdaup \geqslant 2$.
	\end{proof}

\begin{rem}\label{rem: two vdCorput}
	The reader may find the statement of {\rm\cite[Theorem 2.24]{Kratzel}} very complicated. The proof is by applying  twice the {\rm(}weighted{\rm)} one-dimensional van der Corput transform   in {\rm\cite[Theorem 2.1]{Kratzel}}. In our setting, however, the proof may be effectively simplified if   {\rm\cite[Lemma 5.5.3]{Huxley}} is used along with rectangular regularization. 
\end{rem}
	
	\subsection{Process $A_2^{q + 1} B A_2  B A_2$}\label{sec: simple process ABABA}
	
	In our setting, we start with an almost separable double exponential sum $S^2_g (M)$ of  phase   $g \in \boldsymbol{\mathrm{F}}_2^{\gamma }   (M , M^{\valpha} , 1/M ) $ with $\valpha > 5/2$ and $\gamma = 0$ or $\valpha$. Our strategy for estimating $ S^2_g (M)$ consists of two steps. The first step is to apply the composite process    $A_2^{q + 1} B A_2  B A_2$ so that the $T$-parameter decreases below $ M = 1/\delta$. The second step is to use the observation in  Lemma \ref{lem: reduction, separable, 1} along with the simple   exponent pair $\lp   1 / 6,   2 /3 \rp = AB (0, 1) $.

	{\renewcommand{\arraystretch}{1.5}
		\begin{table*} 
			\capbtabbox{
				\begin{tabular}{ c|c | c }  
					\hline
					Process   &  $ (M, T) $ & $\gamma$  \\
					\hline
					&   $(M, T)$ & $\gamma$ \\   
					$A_2^{q+1}$ & $   (M, Th / M^{q+1}) $ & $\gamma-q-1$ \\ 
					$A_2^{q+1}B$  & $ (Th /M^{q+2}, Th / M^{q+1})$ & $1 + 1/ (\gamma-q-2)$ \\ 
					$A_2^{q+1}BA_2$ &   $  (Th /M^{q+2}, M h' )  $ & $  1/ (\gamma-q-2)$ \\ 
					$A_2^{q+1}BA_2B$    & $  (M^{q+3} h' /T h , M h' ) $ & $  - 1/ (\gamma-q-3)$ \\ 
					$A_2^{q+1} BA_2BA_2$  &  $ (M^{q+3} h' /T h , T h   h''/M^{q+2} )$ & $  -1 - 1/ (\gamma-q-3)$ \\
					\hline 
				\end{tabular}
			}{
				\caption{ } \label{table: act on (M,T), 1.1}
			} 
		\end{table*} 
	}

	{\renewcommand{\arraystretch}{1.5}
		\begin{table*} 
			\capbtabbox{
				\begin{tabular}{ c|c  }  
					\hline
					Process   & $(\kappaup, \lambdaup )$  \\
					\hline
					$A_2^{q+1} BA_2BA_2$ &  $  \lp \text{\Large $\frac {7} {4(27Q-7)}$}, \text{\small$1$}-  \text{\Large $\frac {7 (2q+5)} {8(27Q-7)}$} \rp $       \\  
					$BA_2BA_2$ & {\Large $  \lp \frac {7} {26}, \frac{31}{52} \rp$}    \\ 
					$A_2BA_2$ & {\Large $  \lp \frac {5} {52}, \frac{10}{13} \rp$}   \\ 
					$  BA_2$ & {\Large $  \lp \frac {5} {16}, \frac{9}{16} \rp$}  \\ 
					$A_2$ & {\Large $   \lp \frac {1 } {16}, \frac {13} {16} \rp $}  \\ 
					& {\Large $   \lp \frac {1} {6}, \frac{2}{3} \rp  $}  \\
					\hline 
				\end{tabular}
			}{
				\caption{ } \label{table: exp pairs, 1.1}
			} 
		\end{table*} 
	} 
	
	Table \ref{table: act on (M,T), 1.1} and \ref{table: exp pairs, 1.1}  exhibit  the intermediate paramaters $(M, T)$ and   phase exponent $\gamma$, and the corresponding exponent pairs  in the process of applying $A_2^{q + 1} B A_2  B A_2$. By Corollary \ref{cor: Aq, 2.1}, the $h$, $h'$, and $h''$ in Table \ref{table: act on (M,T), 1.1} satisfy
	\begin{align}
		h < M^{\frac {14 Q- 7   } { 27Q-7  } \lp q -   \valpha + \frac {  7 } {2  }  \rp } , \quad  h     '  <    {h ^{ \frac { 12} {13} }} / {M^{ \frac { 12} {13} \lp q - \valpha    + \frac {29}{12} \rp }} , \quad   h     ''  <  {M^{q -   \valpha + \frac {11}{4} } {h     '}^{\frac {3} {4} }}  / { h  } ,
	\end{align}
	hence the last $T$-parameter
	\begin{align*}
		T h   h''/M^{q+2} < (Mh')^{\frac 3 4} < \big( M^{  \valpha - q - \frac 4 3   } h \big)^{\frac 9 {13}} < M^{ \frac {18 (\valpha - q + 1) Q - 21}  {54 Q - 14} },
	\end{align*}
	and it is less than $M$ if $ \valpha \leqslant q + 2 + 7/18 Q$. Since  $\{1, 2\}$ is the exceptional set for $(1/6, 2/3)$, the last phase exponent is  admissible if $\gamma \notin \left\{ q+5/2, q+8/3, q+3  \right\}$. The condition $ \kappaup + 3\lambdaup \geqslant 2 $ or $ 3 \kappaup +  \lambdaup \geqslant 1 $ in  Lemma \ref{lem: A-process, 2.1} and   \ref{lem: B-process, 2.1} may be easily checked. 
	For Lemma  \ref{lem: B-process, 2.1}, we also have to verify
	\begin{align*}
		1/M < M^{q+2} / T h, \qquad 1/M < T h / M^{q+3} h'. 
	\end{align*}
	For these we only need $  q-3 < \valpha < q + 32/13 + 7/26 Q$.

	\begin{thm}\label{thm: simple vdC}
		Let $q$ be a positive integer. Set $Q = 2^q$. 	Let  $\valpha \in [ q + 1 + 7/9Q,   q + 2 +7/18Q ]$ and  $\gamma \notin       \{ 1, 2, ..., q+2, q+5/2, q+8/3, q+3 \}  $. Then we have the estimate
		\begin{align}\label{6eq: bound for S, 1}
			S^2_g (M) \Lt_{q, \gamma,  \vepsilon}  M^{2 - \frac {7}{54 Q - 14} \lp q - \valpha + \frac 7 2 \rp + \vepsilon}   
		\end{align}
		for any $g \in \boldsymbol{\mathrm{F}}_2^{\gamma }   (M , M^{\valpha} , 1/M ) $. 
	\end{thm}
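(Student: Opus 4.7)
The plan is to execute the composite process $A_2^{q+1} B A_2 B A_2$ applied to the one-dimensional exponent pair $(1/6, 2/3) = AB (0, 1)$, following the chain recorded in Tables~\ref{table: act on (M,T), 1.1} and~\ref{table: exp pairs, 1.1}. Starting from $S^2_g(M)$ with $g \in \boldsymbol{\mathrm{F}}_2^{\gamma}(M, M^{\valpha}, 1/M)$, at each stage the almost-separable class $\boldsymbol{\mathrm{F}}_2^{\gamma'}(M', T', 1/M)$ into which the phase is transported is governed by Lemma~\ref{lem: A-process, 2.1} (for $A_2$), Corollary~\ref{cor: Aq, 2.1} (for the iterated $A_2^{q+1}$), and Lemma~\ref{lem: B-process, 2.1} (for $B$); the $\delta$-parameter stays fixed at $1/M$ throughout because the $A_2$- and $B$-processes preserve it. The role of the two $B$-insertions is to keep $T$ decreasing sufficiently fast so that at the end of the chain the parameter $T \cdot h \cdot h'' / M^{q+2}$ falls below $M = 1/\delta$; at that moment, Lemma~\ref{lem: reduction, separable, 1} applies and the two-dimensional sum is controlled by any one-dimensional exponent pair, for which we take $(1/6, 2/3)$.

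Concretely, by Lemma~\ref{lem: A-process, 2.1} and~\ref{lem: B-process, 2.1}, the composite $A_2^{q+1} B A_2 B A_2$ sends $(1/6, 2/3)$ to
\[
(\kappaup_q, \lambdaup_q) = \Bigl( \tfrac{7}{4(27Q-7)},\; 1 - \tfrac{7(2q+5)}{8(27Q-7)} \Bigr),
\]
as recorded at the top of Table~\ref{table: exp pairs, 1.1}. Since $g \in \boldsymbol{\mathrm{F}}_2^{\gamma}(M, M^{\valpha}, 1/M)$, applying the defining inequality \eqref{4eq: exp pair, 2.1} with $T = M^{\valpha}$ produces
\[
S^2_g(M) \Lt M^{2\lambdaup_q - 2\kappaup_q + 2\kappaup_q \valpha + \vepsilon}
= M^{\,2 \,-\, \frac{7}{54Q-14}(q - \valpha + 7/2) + \vepsilon},
\]
which is precisely the asserted bound; the arithmetic here is a direct simplification using $4(27Q-7) = 2(54Q-14)$.

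What remains is a bookkeeping verification of admissibility at each stage. For the inequality $\kappaup + 3\lambdaup \geqslant 2$ needed by Lemma~\ref{lem: A-process, 2.1} and the pair $3\kappaup + \lambdaup \geqslant 1$, $\kappaup + 3\lambdaup \geqslant 2$ needed by Lemma~\ref{lem: B-process, 2.1}, one checks them one step at a time along Table~\ref{table: exp pairs, 1.1}; these are elementary inequalities in $q$. For the admissibility of $\gamma$, each $A_2$ replaces $\gamma$ by $\gamma - 1$ and each $B$ replaces $\gamma$ by $\gamma/(\gamma-1)$, so tracing back from the exceptional set $\{1,2\}$ of $(1/6,2/3)$ through $A_2$, $BA_2$, $A_2BA_2$, $BA_2BA_2$, $A_2^{q+1}BA_2BA_2$ produces exactly the finite set $\{1,2,\dots,q+2,q+5/2,q+8/3,q+3\}$ excluded in the statement. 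The condition $\delta < M/T$ required in each invocation of Lemma~\ref{lem: B-process, 2.1} translates, in view of Table~\ref{table: act on (M,T), 1.1}, into the inequalities $1/M < M^{q+2}/(Th)$ and $1/M < Th/(M^{q+3}h')$, both of which hold under $\valpha < q+2+7/18Q$ and the assumed ranges of $h,h'$ coming from Corollary~\ref{cor: Aq, 2.1}.

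The main obstacle is the boundary condition that allows us to close the argument with Lemma~\ref{lem: reduction, separable, 1}: one must verify that after the final $A_2$ the resulting $T$-parameter
\[
Th\,h''/M^{q+2} < (Mh')^{3/4} < \bigl( M^{\valpha - q - 4/3} h \bigr)^{9/13} < M^{\frac{18(\valpha - q + 1)Q - 21}{54Q - 14}}
\]
stays below $M$; this is where the upper bound $\valpha \leqslant q + 2 + 7/18Q$ in the hypothesis enters, and it is also the place that dictates the particular arrangement of $A_2$'s and $B$'s in the composite word. The lower bound $\valpha \geqslant q+1+7/9Q$ guarantees that the $A_2^{q+1}$ step starts from a regime $T > M^{q+1}$ in which Corollary~\ref{cor: Aq, 2.1} applies (and in particular the intermediate $Th/M^{q+1}$ remains $\geqslant M$). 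Once these endpoint conditions are verified, the rest is routine substitution and the final bound follows.
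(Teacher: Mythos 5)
Your proposal is correct and follows essentially the same route as the paper's own proof: the composite process $A_2^{q+1}BA_2BA_2$ applied to $(1/6,2/3)$, with the two $B$-insertions driving the $T$-parameter below $M=1/\delta$ so that Lemma~\ref{lem: reduction, separable, 1} closes the chain, and the resulting exponent pair $\lp \tfrac{7}{4(27Q-7)},\,1-\tfrac{7(2q+5)}{8(27Q-7)}\rp$ giving exactly \eqref{6eq: bound for S, 1} upon substitution. One small inaccuracy is your final parenthetical: the lower bound $\valpha\geqslant q+1+7/9Q$ does \emph{not} force $Th/M^{q+1}\geqslant M$ (for $h=1$ that would require $\valpha\geqslant q+2$); what actually happens is that Corollary~\ref{cor: Aq, 2.1} only demands the subsidiary estimate for those $h$ with $Th>M^{q+2}$, the remaining $h$ being absorbed by the trivial bound \eqref{6eq: S2 < M} inside the proof of Lemma~\ref{lem: A-process, 2.1}, so this remark is offered as commentary on a hypothesis rather than a step you rely on and does not create a gap.
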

	
	In the case $q=1$, the $\beta$-barrier   of $A_2^2 B A_2  B A_2 (1/6, 2/3) = (7/188, 327/376)$ (see Definition \ref{defn: theta-barrier}) is at $ 219/139 = 1.57554... $.

	For comparison,  consider the trivial $\delta$-exponent pair as in Remark {\rm\ref{rem: trivial}} obtained from 
	\begin{align*}
		A^{q } \lp \frac {13}{84}, \frac {55}{84} \rp \ = \lp \frac {13} {110 Q - 26}, 1 - \frac {13q+29} {110 Q - 26} \rp.   
	\end{align*}
	We then have 
	\begin{align}\label{6eq: trivial bound for S}
		S^2_g (M) \Lt_{q, \gamma,  \vepsilon}  M^{2 - \frac {13}{110 Q - 26} \lp q - \valpha + \frac {29} {13} \rp + \vepsilon}  . 
	\end{align}
	Some calculations show that {\rm\eqref{6eq: trivial bound for S}} is inferior to {\rm\eqref{6eq: bound for S, 1}} for $\valpha \in [ q + 1 + 7/9Q,   q + 2 +7/18Q ]$. Moreover, for $q=1$, the $\beta$-barrier is at $59/38 = 1.55263...$, and this is smaller than $1.57554...$. 

	\subsection{Remarks on further improvements}
	\label{sec: improvements}
	
	The only reason that we did not use Bourgain's exponent pair is that $ \kappaup + 3 \lambdaup < 2$ if $(\kappaup, \lambdaup) = B A_2 (13/84, 55/84) = (17/55, 123/220)$ and $3 \kappaup +  \lambdaup < 1$ if $(\kappaup, \lambdaup) = A_2 (13/84, 55/84) = (13/220, 89/110)$.  Note that   $\kappaup + 3 \lambdaup = 2$ if $(\kappaup, \lambdaup) = B A_2(1/6, 2/3) = (5/16, 9/16)$ and $3\kappaup +  \lambdaup = 1$ if $(\kappaup, \lambdaup) = A_2 (1/16, 13/16)$.   However, by examine the proofs, it is easy to see that the conditions  $\kappaup + 3 \lambdaup \geqslant 2$ and $3 \kappaup +  \lambdaup \geqslant 1$ in Lemma \ref{lem: A-process, 2.1} and \ref{lem: B-process, 2.1} may be replaced  by $ T \geqslant M^{  \frac {4 \kappaup - 3\lambdaup+2} {3\kappaup } } $ and $ T \geqslant M^{  \frac {4 \lambdaup -4\kappaup} {4 \lambdaup-1} } $ respectively. Then it no longer works for all $T > M$, so the theory needs a revision and becomes less elegant.  At any rate, by using Bourgain's exponent pair, though   slightly, one may improve Theorem \ref{thm: simple vdC}   and raise the $\beta$-barrier to $ 1.57579... $.

	\subsection{The second van der Corput method} \label{sec: second vdC} 
	
	In order to improve the $\beta$-barrier further, we would like to develop  the second  van der Corput method. 
	By necessity, we need more involved notation and conditions.

	\begin{defn}\label{5defn: class F(gamma)}
		Let    $  T_1 >  M_1   > 1 $ and $ T_2 >  M_2   > 1 $ with $T_1, T_2$ large. 
		Let  $  N > 0 $. Let $\gamma $ be real.   
		Define	$\boldsymbol{\mathrm{F}}_2^{\gamma}   (M_1, M_2, T_1, T_2, N  ) $ to be the set of real  functions $ g \in C^{\infty} (D)$, with  rectangle $D = [c_1, d_1] \times [c_2, d_2] \subset  [\varOmega M_1, \varOmega' M_1] \times [\varOmega M_2, \varOmega' M_2 ]$, of the form
		\begin{align}\label{6eq: g(y1, y2)=...}
			g (y_1, y_2) = T_1 \psi_1 (y_1/M_1)  - T_2 \psi_2 (y_2/M_2) + N \omega   (y_1 /M_1 , y_2/M_2 ), 
		\end{align} where $ g_1 (y) = T_1 \psi_1 (y/M_1) \in \boldsymbol{\mathrm{F}}_1^{\gamma}  ( M_1, T_1  ) $, $   g_2 (y) = T_2 \psi_2 (y/M_2) \in \boldsymbol{\mathrm{F}}_1^{\gamma}  ( M_2, T_2  ) $,   and
		\begin{align}\label{4eq: bound for rho}
			\frac {\partial^{j_1+j_2} \omega (y_1, y_2)} { \partial y_1^{j_1} \partial y_2^{j_2} } \Lt_{   \gamma, j_1, j_2}  1 ,
		\end{align}
		for all  $   j_1, j_2 = 0, 1, 2, ...  $.

		We say that the double exponential sum 
		\begin{align*}
			S_{g}^2 (M_1, M_2  ) =   \sum_{(m_1, \shskip m_2) \shskip \in D} e (g  (m_1, m_2))  
		\end{align*}
		is almost separable if 
		\begin{align}\label{6eq: almost separable}  
			N < \min \{ T_1, T_2 \}^{1-\vepsilon} . 
		\end{align}
	\end{defn}

	Subsequently, we shall always assume that  $(\kappaup, \lambdaup)  $ is an exponent pair and that $\gamma$ is admissible in the sense of Definition \ref{defn: exp pair, 1}.

	\begin{defn}
		\label{defn: exp pair, 2} 
		We say that $ (\kappaup, \lambdaup) $ is an exponent pair for $ ( M_1, M_2, T_1, T_2, N  ) $ if the estimate
		\begin{align}\label{4eq: exp pair, 2}
			S^2_g (M_1, M_2) \Lt_{\vepsilon,   \gamma, (\kappaup, \lambdaup)}  (M_1 M_2)^{   \lambdaup -     \kappaup}  (T_1 T_2)^{  \kappaup +\vepsilon} 
		\end{align}
		is valid for all $g \in \boldsymbol{\mathrm{F}}_2^{\gamma} ( M_1, M_2, T_1, T_2, N  )  $ with $\gamma$ admissible. 
	\end{defn}

	When $N \Lt 1$, it follows from \eqref{4eq: bound for rho} that $$ \frac{\partial^{j_1+j_2} e ( N  \omega   (y_1 /M_1, y_2/M_2 ))}{\partial y_1^{j_1} \partial y_2^{j_2}} \Lt \frac 1 {M_1^{j_1} M_2^{j_2}} $$ for $j_1, j_2 = 0, 1$, 
	and the analogue of Lemma \ref{lem: reduction, separable, 1} follows easily. The observation in Remark \ref{rem: not just T > M} is also used here for \eqref{6eq: bound, separable}. 
	
	\begin{lem}\label{lem: reduction, separable}
		If $N \Lt 1$, then $ (\kappaup, \lambdaup) $ is an exponent pair for any $ ( M_1, M_2, T_1, T_2, N  )  $, and we have uniformly 
		\begin{align}\label{6eq: bound, separable}
			S^2_g (M_1, M_2) \Lt_{\vepsilon,   \gamma, (\kappaup, \lambdaup)} \big( M_1^{  \lambdaup -    \kappaup}  T_1^{ \kappaup +\vepsilon} + M_1/T_1 \big) \big(  M_2^{  \lambdaup -    \kappaup}  T_2^{ \kappaup +\vepsilon} + M_2/T_2 \big)  
		\end{align}
		for any $M_1, M_2, T_1, T_2 > 1$. 
	\end{lem}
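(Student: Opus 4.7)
The strategy is to peel off the factor $e(N\omega)$ as a smooth two-variable weight and then reduce to a product of two one-dimensional exponent-pair estimates via two-dimensional Abel summation. The hypothesis $N \Lt 1$ is essential precisely to make this weight have small partial derivatives of low order.

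First I would write the summand as
\begin{equation*}
e(g(m_1, m_2)) = e(T_1\psi_1(m_1/M_1)) \cdot e(-T_2\psi_2(m_2/M_2)) \cdot W(m_1, m_2),
\end{equation*}
where $W(y_1, y_2) = e(N\omega(y_1/M_1, y_2/M_2))$. Since $N \Lt 1$, the chain rule combined with \eqref{4eq: bound for rho} yields
\begin{equation*}
\partial_{y_1}^{j_1}\partial_{y_2}^{j_2} W(y_1, y_2) \Lt \frac{1}{M_1^{j_1} M_2^{j_2}}, \qquad (j_1, j_2) \in \{0,1\}^2,
\end{equation*}
which is the bound stated immediately above the lemma. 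I would then define the one-dimensional partial sums
\begin{equation*}
A(x_1) = \sum_{c_1 \leq m_1 \leq x_1} e(T_1\psi_1(m_1/M_1)), \quad B(x_2) = \sum_{c_2 \leq m_2 \leq x_2} e(-T_2\psi_2(m_2/M_2)),
\end{equation*}
and apply partial (Abel) summation first in $m_1$ and then in $m_2$, obtaining
\begin{align*}
S^2_g(M_1, M_2) &= A(d_1) B(d_2) W(d_1, d_2) - A(d_1) \int_{c_2}^{d_2} B(x_2)\, \partial_{x_2} W(d_1, x_2)\, dx_2 \\
&\quad - B(d_2) \int_{c_1}^{d_1} A(x_1)\, \partial_{x_1} W(x_1, d_2)\, dx_1 \\
&\quad + \iint_{D} A(x_1) B(x_2)\, \partial_{x_1}\partial_{x_2} W(x_1, x_2)\, dx_1 dx_2.
\end{align*}
Combining the derivative bounds on $W$ with $d_i - c_i \Lt M_i$, each of the four pieces is bounded by $\sup_{x_1}|A(x_1)| \cdot \sup_{x_2}|B(x_2)|$.

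Both $T_1\psi_1(y/M_1)$ and $-T_2\psi_2(y/M_2)$ lie in $\boldsymbol{\mathrm{F}}_1^{\gamma}(M_i, T_i)$, as the class is closed under the sign change $b \mapsto -b$. Consequently the form of the exponent pair bound given in Remark \ref{rem: not just T > M}, namely \eqref{4eq: exp pair, 1.1}, applies uniformly in the right endpoint and gives
\begin{equation*}
|A(x_1)| \Lt M_1^{\lambdaup-\kappaup} T_1^{\kappaup+\vepsilon} + M_1/T_1, \quad |B(x_2)| \Lt M_2^{\lambdaup-\kappaup} T_2^{\kappaup+\vepsilon} + M_2/T_2,
\end{equation*}
which after multiplying is precisely \eqref{6eq: bound, separable}. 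For the first assertion of the lemma, under the convention $T_i > M_i > 1$ of Definition \ref{defn: exp pair, 2} one has $M_i/T_i < 1 \leq M_i^{\lambdaup-\kappaup} T_i^{\kappaup}$ since $\lambdaup \geq \kappaup \geq 0$, so the $M_i/T_i$ terms are absorbed and \eqref{4eq: exp pair, 2} follows.

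No serious obstacle is expected: the only care required is to verify that all four Abel-summation terms carry the same order of magnitude, which is immediate from the pointwise derivative bounds on $W$ and the rectangular geometry of $D$. A minor subtlety is that the one-dimensional exponent pair bound must hold uniformly for $A(x_1)$ as $x_1$ ranges over $[c_1, d_1]$, but this is automatic since any restriction of the sum remains in the class $\boldsymbol{\mathrm{F}}_1^{\gamma}(M_1, T_1)$ (the endpoints stay inside $[\varOmega M_1, \varOmega' M_1]$), and similarly for $B(x_2)$.
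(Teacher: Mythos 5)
Your proof is correct and matches the paper's intended argument: the paper states that "the analogue of Lemma \ref{lem: reduction, separable, 1} follows easily" and that Remark \ref{rem: not just T > M} is used for \eqref{6eq: bound, separable}, which is precisely your route of splitting out $e(N\omega)$ as a weight with derivative bounds $\Lt M_1^{-j_1} M_2^{-j_2}$, applying two-dimensional Abel summation, and invoking \eqref{4eq: exp pair, 1.1} on each factor. You have simply supplied the details the paper leaves implicit.
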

	
	
	In the Weyl difference, we now use   $(h_1, \pm h_2)$  instead of $(h, h)$ in Lemma \ref{lem: A-process, 2.1}. 
	
	\begin{lem}[$A$-process]\label{lem: A-process, 2.2}
		Define \begin{align}
			\label{6eq: H1, H2}
			H_1 = M_1^{ \frac {2\kappaup-\lambdaup+1} {\kappaup + 1} } / T_1 ^{ \frac {\kappaup} {\kappaup + 1} }, \qquad H_2 = M_2^{ \frac {2\kappaup-\lambdaup+1} {\kappaup + 1} } / T_2 ^{ \frac {\kappaup} {\kappaup + 1} }. 
		\end{align}  Assume that  
		\begin{align}\label{6eq: assumption for A} 
			T_1 > M_1^{2  }, \qquad  
			T_2 > M_2^{2 } ,
		\end{align}  
		If  $(\kappaup, \lambdaup)$ is an exponent pair for any $(M_1, T_1 h_1/M_1 , M_2, T_2  h_2/M_2 , N (h_1/M_1+h_2/M_2)  )$ with $ 1\leqslant h_1 < H_1$ and  $ 1\leqslant h_2 < H_2$, then $A (\kappaup, \lambdaup) $  is an exponent pair for   $( T_1 , T_2 , M_1, M_2 , N )$. 
	\end{lem}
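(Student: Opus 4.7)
The plan is to imitate the proof of the one-dimensional $A$-process: apply a two-dimensional Weyl differencing with shift vector $\mathbf{h}=(h_1,h_2)$ so that all three contributions to $g$---the two separable pieces and the mixing term $N\omega$---are simultaneously contracted to the next smaller scale, then invoke the hypothesis on the shifted phase and optimize $(H_1,H_2)$.

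The first step is to establish a two-dimensional analogue of \eqref{6eq: Weyl-vdC, 1} with independent shifts: by applying the one-dimensional Weyl--van der Corput inequality successively in each variable, one obtains, for $1\leqslant H_i<d_i-c_i$,
\begin{align*}
\big|S^2_g(M_1,M_2)\big|^2 \Lt \frac{(M_1M_2)^2}{H_1H_2}+\frac{M_1M_2}{H_1H_2}\mathop{\sum\sum}_{\substack{1\leqslant h_1<H_1\\ 1\leqslant h_2<H_2}}\big|S^2_{\Delta_\mathbf{h}g}(M_1,M_2)\big|,
\end{align*}
where $\Delta_\mathbf{h}g(y_1,y_2)=g(y_1+h_1,y_2+h_2)-g(y_1,y_2)$, and the boundary shifts (with $h_1=0$ or $h_2=0$) reduce to one-dimensional sums that may be absorbed into the main term.

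The key step is to identify the class of the shifted phase. After differencing, the two separable pieces $T_1\psi_1(y_1/M_1)$ and $-T_2\psi_2(y_2/M_2)$ become functions of a single variable lying in $\boldsymbol{\mathrm{F}}_1^{\gamma-1}(M_1,T_1h_1/M_1)$ and $\boldsymbol{\mathrm{F}}_1^{\gamma-1}(M_2,T_2h_2/M_2)$, respectively, exactly as in the one-dimensional $A$-process. For the mixing term, Lemma~\ref{lem: 2-dim} applied to $\omega$ in the rescaled unit-box coordinates (with $\delta_i=h_i/M_i$) shows that the $\omega$-difference has all partial derivatives of size $\Lt h_1/M_1+h_2/M_2$; factoring out this amplitude,
\begin{align*}
N\bigl[\omega\bigl(\tfrac{y_1+h_1}{M_1},\tfrac{y_2+h_2}{M_2}\bigr)-\omega\bigl(\tfrac{y_1}{M_1},\tfrac{y_2}{M_2}\bigr)\bigr]=N(h_1/M_1+h_2/M_2)\,\widetilde\omega(y_1/M_1,y_2/M_2),
\end{align*}
where $\widetilde\omega$ satisfies the derivative bounds in \eqref{4eq: bound for rho}. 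Hence $\Delta_\mathbf{h}g\in\boldsymbol{\mathrm{F}}_2^{\gamma-1}\bigl(M_1,M_2,T_1h_1/M_1,T_2h_2/M_2,N(h_1/M_1+h_2/M_2)\bigr)$, and the hypothesis yields $\big|S^2_{\Delta_\mathbf{h}g}\big|\Lt(M_1M_2)^{\lambdaup-\kappaup}\bigl(T_1T_2h_1h_2/(M_1M_2)\bigr)^{\kappaup+\vepsilon}$. Summing this geometric series over $1\leqslant h_i<H_i$ and substituting back gives
\begin{align*}
\big|S^2_g\big|^2\Lt\frac{(M_1M_2)^2}{H_1H_2}+(M_1M_2)^{\lambdaup-2\kappaup+1}(T_1T_2)^{\kappaup+\vepsilon}(H_1H_2)^{\kappaup},
\end{align*}
and balancing componentwise---equating $M_i^2/H_i$ with $M_i^{\lambdaup-2\kappaup+1}T_i^{\kappaup}H_i^{\kappaup}$---recovers precisely the choice in \eqref{6eq: H1, H2} and the exponent pair $A(\kappaup,\lambdaup)=\bigl(\kappaup/(2\kappaup+2),(\kappaup+\lambdaup+1)/(2\kappaup+2)\bigr)$. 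The assumption $T_i>M_i^2$ together with $\lambdaup\leqslant 1$ ensures $H_i\leqslant M_i$, so the Weyl differencing is non-degenerate.

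The main obstacle I anticipate is the phase analysis in the second step: verifying that the $\omega$-difference factors cleanly as $(h_1/M_1+h_2/M_2)\widetilde\omega$ with $\widetilde\omega$ satisfying $O(1)$ partial-derivative bounds uniformly in both shifts, even when $h_1/M_1$ and $h_2/M_2$ are of very different orders. A secondary technical point is the treatment of the boundary shifts ($h_1=0$ or $h_2=0$), which produce one-dimensional sums that must be controlled by the 1D theory and shown to be dominated by the main term; this is precisely what the hypothesis $T_i>M_i^2$ in \eqref{6eq: assumption for A} ensures.
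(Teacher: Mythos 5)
Your overall strategy---two-dimensional Weyl differencing, identification of the shifted phase class via Lemma \ref{lem: 2-dim}, then balancing the bounds---is indeed the paper's approach, but several steps have genuine gaps. First, the two-dimensional Weyl--van der Corput inequality you write down is not the correct one: the actual inequality (Kr\"atzel, Theorem 2.21, used as \eqref{6eq: Weyl, 2.2}) produces \emph{four} families of shifted sums, namely $S_{+}(h_1,h_2)$ with shift $(h_1,h_2)$, $S_{-}(h_1,h_2)$ with shift $(h_1,-h_2)$, and the boundary sums $S_1(h_1)$, $S_2(h_2)$ with one component zero. The opposite-sign family $S_-$ is unavoidable because in two dimensions conjugation only negates \emph{both} shift components simultaneously, so one cannot reduce to $(h_1,h_2)\geqslant(1,1)$. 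More importantly, the boundary sums $S_1$, $S_2$ are not ``absorbed into the main term'': with the optimal $H_1,H_2$ from \eqref{6eq: H1, H2}, the third and fourth terms in \eqref{6eq: bound for S} are of exactly the same size as the main term, and they must be estimated using the one-dimensional bound \eqref{4eq: exp pair, 1} applied to a $\boldsymbol{\mathrm{F}}_1^{\gamma-1}$ phase, not dismissed.

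Second, your explanation of the role of hypothesis \eqref{6eq: assumption for A} is wrong. The inequality $H_i\leqslant M_i$ holds automatically from $\lambdaup>\kappaup$ and $T_i>1$, so it has nothing to do with $T_i>M_i^2$. The actual purpose of $T_i>M_i^2$ is to guarantee $T_i h_i/M_i>M_i$ for every $h_i\geqslant 1$, so that the shifted phases lie in classes $\boldsymbol{\mathrm{F}}_2^{\gamma-1}$ and $\boldsymbol{\mathrm{F}}_1^{\gamma-1}$ with valid parameter ordering, making the estimates \eqref{4eq: exp pair, 2} and \eqref{4eq: exp pair, 1} applicable to all the shifted sums. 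Third, you never address the regime where the optimal $H_i$ exceeds the side length $d_i-c_i$; the paper spends the second half of its proof on this, splitting into three sub-cases and invoking $\frac{\kappaup+\lambdaup+1}{2\kappaup+2}\geqslant\frac23$ to close the argument. Your treatment of the mixing term via Lemma~\ref{lem: 2-dim} and the balancing step are fine, but the proof as written does not establish the lemma.
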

	
	\begin{proof}
		We use the two-dimensional van der Corput--Weyl inequality in \cite[Theorem 2.21]{Kratzel}: 
		\begin{align}\label{6eq: Weyl, 2.2}
			\begin{aligned}
				S  \Lt \frac {M_1 M_2} {\sqrt{H_1 H_2}} + 
				\Bigg\{ \frac {M_1 M_2} {{H_1 H_2}}  \Bigg(   \sum_{1 \leqslant h_1 < H_1}    \sum_{1 \leqslant h_2 < H_2}    \left| S_{+} (h_1, h_2) \right|   +   \left|S_{-} (h_1, h_2) \right| & \\
				\quad  +   \sum_{1 \leqslant h_1<H_1}   \left|S_{1 }(h_1) \right| +     \sum_{1 \leqslant h_2<H_2}  \left|S_{2 } (h_2) \right| & \Bigg)  \Bigg\}^{1/2}, 
			\end{aligned} 
		\end{align}
		for $1\leqslant H_1 \leqslant d_1-c_1$ and  $1\leqslant H_2 \leqslant d_2-c_2$,
		where
		\begin{align*}
			& S = \sum_{(m_1, \shskip m_2) \shskip \in D} e (g  (m_1, m_2)), \\
			& S_{\pm} (h_1, h_2) = \sum_{(m_1, \shskip m_2) \shskip \in D (h_1,  \pm h_2)}   e ( g  (m_1+h_1, m_2 \pm h_2) -  g(m_1, m_2)),  \\
			& S_{1} (h_1 ) = \sum_{(m_1, \shskip m_2) \shskip \in D (h_1,  0)}  e (g  (m_1+h_1 , m_2 ) -  g(m_1, m_2)), \\
			& S_{2} (h_2 ) = \sum_{(m_1, \shskip m_2) \shskip \in D (0 ,  - h_2 )}  e (g  (m_1 , m_2 - h_2 ) -  g(m_1, m_2)) . 
		\end{align*}
		and $D (h_1, \pm h_2) = D \cap (D - (h_1, \pm h_2))$.  
		For $g \in  \boldsymbol{\mathrm{F}}_2^{\gamma} ( M_1, M_2, T_1, T_2, N  )$, the phase function of $ S_{\pm} (h_1, h_2) $ is in $ \boldsymbol{\mathrm{F}}_2^{\gamma-1} ( M_1, M_2, T_1 h_1/M_1, T_2h_2/M_2, N (h_1/M_1 + h_2/M_2) ) $, while the phase  of $S_{1} (h_1)$ or $S_{2} (h_2)$,    viewed as function of $y_1$ or $y_2$, lies in $\boldsymbol{\mathrm{F}}_1^{\gamma-1} (M_1, T_1 h_1/M_1)$ or $\boldsymbol{\mathrm{F}}_1^{\gamma-1} (M_2, T_2 h_2/M_2)$ by \eqref{6eq: almost separable}, respectively. By the assumptions of this lemma, in particular \eqref{6eq: assumption for A},  the estimates \eqref{4eq: exp pair, 2} and \eqref{4eq: exp pair, 1}  are applicable to these sums whenever   $1\leqslant h_1 < H_1 $ and $1 \leqslant h_2 < H_2$.  Therefore
		\begin{align*}
			S_{\pm} (h_1, h_2) \Lt (M_1 M_2)^{   \lambdaup -    2 \kappaup} (h_1 h_2)^{\kappaup} (T_1 T_2)^{  \kappaup +\vepsilon}, 
		\end{align*}
		and
		\begin{align*}
			S_1 (h_1 ) \Lt   M_1^{   \lambdaup -    2 \kappaup} h_1^{\kappaup}  T_1^{  \kappaup +\vepsilon} \cdot M_2, \qquad  S_2 (h_2) \Lt    M_2^{   \lambdaup -    2 \kappaup} h_2^{\kappaup}  T_2^{  \kappaup +\vepsilon} \cdot M_1. 
		\end{align*}
		By substituting these into \eqref{6eq: Weyl, 2.2} and summing up, we have
		\begin{align}\label{6eq: bound for S}
			\begin{aligned}
				S \Lt \frac {M_1 M_2} {\sqrt{H_1 H_2}} & + (M_1 M_2)^{\frac 1 2 \lambdaup - \kappaup + \frac 1 2} (H_1 H_2)^{\frac 1 2 \kappaup} (T_1 T_2)^{\frac 1 2   \kappaup + \vepsilon} \\
				& + \frac {M_2} {\sqrt{H_2}}   M_1^{\frac 1 2 \lambdaup - \kappaup + \frac 1 2}  H_1 ^{\frac 1 2 \kappaup}  T_1^{\frac 1 2   \kappaup + \vepsilon} + \frac {M_1} {\sqrt{H_1}}   M_2^{\frac 1 2 \lambdaup - \kappaup + \frac 1 2}  H_2 ^{\frac 1 2 \kappaup}  T_2^{\frac 1 2   \kappaup + \vepsilon} . 
			\end{aligned}
		\end{align}
		We attain the desired bound on choosing $H_1   $ and $H_2$ as in \eqref{6eq: H1, H2} when $H_1  \leqslant d_1 - c_1$ and  $H_2  \leqslant d_2 - c_2$ are both satisfied. For the rest of the proof,   with abuse of notation, let $H_1$ and $H_2$ be defined as in \eqref{6eq: H1, H2}.  In the case when $H_1 > d_1 - c_1$ and  $H_2  > d_2 - c_2$, we have $S \Lt {(M_1 M_2)^{1+\vepsilon} } / {\sqrt{(d_1 - c_1)(d_2 - c_2)}}$ by \eqref{6eq: bound for S} and       $S \Lt (d_1 - c_1)(d_2 - c_2)$ by trivial estimation, so 
		\begin{align*}
			S \Lt (M_1 M_2)^{  2 / 3 + \vepsilon}. 
		\end{align*}
		This is adequate because
		\begin{align}\label{6eq: >2/3}
			\frac {\kappaup + \lambdaup + 1} { 2\kappaup + 2} \geqslant \frac 2 3, 
		\end{align}
		so that 
		\begin{align*}
			S \Lt (M_1 M_2)^{ \frac {\kappaup + \lambdaup + 1} { 2\kappaup + 2} + \vepsilon} < (M_1 M_2)^{ \frac {  \lambdaup + 1} { 2\kappaup + 2} }  ( T_1 T_2)^{\frac {\kappaup  } { 2\kappaup + 2} + \vepsilon} . 
		\end{align*}
		In the case when $H_1 > d_1 - c_1$ and  $H_2 \leqslant d_2 - c_2$, we have $S\Lt {(M_1 M_2)^{1+\vepsilon}} / {\sqrt{(d_1 - c_1) H_2}}$  by \eqref{6eq: bound for S} and        $S \Lt (d_1 - c_1)M_2^{1+\vepsilon}/\sqrt{H_2}$  by applying \eqref{4eq: exp pair, 1} to the $m_2$-sum, with exponent pair $A(\kappaup, \lambdaup)$, followed by trivial estimation for the $m_1$-sum, so  
		\begin{align*}
			S \Lt  M_1^{2/3 +\vepsilon} \frac { M_2^{1+\vepsilon}} {\sqrt{H_2}}, 
		\end{align*}
	and the result follows again from \eqref{6eq: >2/3}. 
		In the case when $H_1 \leqslant d_1 - c_1$ and  $H_2 > d_2 - c_2$, we use the same argument to conclude the proof. 
	\end{proof}
	
\delete{	Observe that   Lemma \ref{lem: A-process, 2.2} simplifies when $M_1 = M_2 = M$, for the condition \eqref{6eq: assumption for A, 2} becomes redundant.  This is becasue $H_1, H_2 < M^{\frac {1-\lambdaup} {\kappaup+1}} < \hskip -1.5pt \sqrt{M}$ by \eqref{6eq: H1, H2} and \eqref{6eq: assumption for A}. By induction, it is easy to deduce the analogue of Corollary \ref{cor: Aq, 2.1}.

	\begin{cor}\label{cor: Aq, 2.2}
		Let $q$ be a positive integer. Set $Q = 2^q$.
		
		{\rm(1)} We have 
		\begin{align}\label{6eq: Aq, 2}
			A^q (\kappaup, \lambdaup) = \lp \frac {\kappaup} { 2 (Q-1) \kappaup + Q}, 1 - \frac {q \kappaup -\lambdaup + 1 } {2 (Q-1) \kappaup + Q } \rp .
		\end{align} 
		
		{\rm(2)} Let $T > M^{q+1}$.    Define 
		\begin{align}
			H_q = M^{\frac {2(Q-1) ((q+1) \kappaup -\lambdaup + 1)} {2(Q-1) \kappaup + Q   } }  / T^{\frac { 2(Q-1) \kappaup } { 2 (Q-1) \kappaup + Q} }. 
		\end{align} 
		In order for $ A^q (\kappaup, \lambdaup) $ to be an exponent pair for  $(M , M  , T  , T  ,  N )$,  it suffices that $ (\kappaup, \lambdaup) $ is  an exponent pair for  any   $(M , M  ,  T h_1 /M^q  , T h_2 /M^q  , N H_q / M^q  )$   with  $1 \leqslant h_1, h_2 < H_q$. 
	\end{cor}
}

	Next, we would like to remove the condition \eqref{6eq: assumption for A} 
	in the case that the sums become separable   after applying the Weyl difference.

	\begin{lem}\label{lem: A-process, 2.3}
		Let notation be as in Lemma {\rm\ref{lem: A-process, 2.2}}. If $ N (H_1/M_1 + H_2/M_2) \Lt 1 $, then $A (\kappaup, \lambdaup)$ is an  exponent pair for   $( T_1 , T_2 , M_1, M_2 , N )$.  
	\end{lem}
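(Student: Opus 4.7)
\medskip

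\noindent\textbf{Proof proposal.}
The plan is to run the same Weyl differencing argument as in Lemma \ref{lem: A-process, 2.2}, but to replace the appeal to the inductive hypothesis (which needed \eqref{6eq: assumption for A}) by an appeal to Lemma \ref{lem: reduction, separable}, whose applicability is precisely what the hypothesis $N(H_1/M_1 + H_2/M_2) \Lt 1$ guarantees.

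More concretely, I would start with the two-dimensional van der Corput--Weyl inequality \eqref{6eq: Weyl, 2.2} for parameters $1 \leqslant H_1 \leqslant d_1 - c_1$ and $1 \leqslant H_2 \leqslant d_2 - c_2$. For $1 \leqslant h_i < H_i$, the phase of $S_{\pm}(h_1,h_2)$ belongs to
\[
\boldsymbol{\mathrm{F}}_2^{\gamma-1}\bigl(M_1, M_2, T_1 h_1/M_1, T_2 h_2/M_2, N(h_1/M_1 + h_2/M_2)\bigr),
\]
and the hypothesis $N(H_1/M_1 + H_2/M_2) \Lt 1$ forces the mixing parameter $N' := N(h_1/M_1 + h_2/M_2) \Lt 1$. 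Thus Lemma \ref{lem: reduction, separable} applies and yields
\[
S_{\pm}(h_1, h_2) \Lt \bigl( M_1^{\lambdaup - 2\kappaup} (T_1 h_1)^{\kappaup + \vepsilon} + M_1^2/(T_1 h_1) \bigr)\bigl( M_2^{\lambdaup - 2\kappaup} (T_2 h_2)^{\kappaup + \vepsilon} + M_2^2/(T_2 h_2) \bigr),
\]
with no additional restriction on $T_i$ versus $M_i^2$. For the edge sums $S_1(h_1)$ and $S_2(h_2)$ (which are essentially one-dimensional in one variable and trivial in the other) I would apply \eqref{4eq: exp pair, 1.1} from Remark \ref{rem: not just T > M}, again avoiding any restriction of the form $T_i > M_i^2$.

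Substituting these bounds into \eqref{6eq: Weyl, 2.2} and choosing $H_1, H_2$ as in \eqref{6eq: H1, H2} gives the main term of the form $(M_1 M_2)^{(\kappaup + \lambdaup + 1)/(2\kappaup + 2)} (T_1 T_2)^{\kappaup/(2\kappaup + 2) + \vepsilon}$, which is exactly the bound for $A(\kappaup,\lambdaup)$. The cross terms arising from the $M_i^2/(T_i h_i)$ pieces, once summed against $h_i < H_i$ and weighted by $(M_1 M_2/H_1 H_2)^{1/2}$, should be controlled by $\log$-factors times either the main term or terms of smaller order, because summation in $h_i$ produces only a $\log H_i$, and $H_i$ itself is a small power of $M_i$ (note $H_i < M_i$ as long as $\lambdaup > \kappaup$, which always holds). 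The boundary cases $H_i > d_i - c_i$ are handled exactly as in the proof of Lemma \ref{lem: A-process, 2.2}: one trades the Weyl step for trivial summation combined with the one-dimensional bound \eqref{4eq: exp pair, 1.1}, and the inequality \eqref{6eq: >2/3} closes the estimate.

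The main obstacle I anticipate is bookkeeping of the $M_i/T_i$ error terms coming from the separable reduction: one must check that, for every combination of ``main $\times$ main,'' ``main $\times$ error,'' and ``error $\times$ error,'' the result after summation over $h_1, h_2$ and the subsequent square-root does not exceed $(M_1 M_2)^{\lambdaup' - \kappaup'}(T_1 T_2)^{\kappaup' + \vepsilon}$ with $(\kappaup', \lambdaup') = A(\kappaup, \lambdaup)$. The check is routine but must cover the cases $T_i \geqslant M_i^2$ and $T_i < M_i^2$ separately, since in the latter range the error $M_i^2/T_i$ in \eqref{6eq: bound, separable} can dominate. In those ranges the trivial bound $M_i^2$ combined with \eqref{6eq: >2/3} should again suffice, mirroring the endgame of Lemma \ref{lem: A-process, 2.2}.
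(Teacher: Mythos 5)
Your proposal is correct and takes essentially the same approach as the paper, whose entire proof reads: ``Use \eqref{6eq: bound, separable} in Lemma \ref{lem: reduction, separable} instead of \eqref{4eq: exp pair, 2} in the proof of Lemma \ref{lem: A-process, 2.2}.'' Your expansion—including the observation that the edge sums $S_1(h_1)$, $S_2(h_2)$ should be handled via \eqref{4eq: exp pair, 1.1} from Remark \ref{rem: not just T > M} rather than \eqref{4eq: exp pair, 1}, so that no condition of the form $T_i > M_i^2$ is needed—fills in exactly the bookkeeping the paper's one-line proof leaves implicit.
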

	
	\begin{proof}
		Use \eqref{6eq: bound, separable} in   Lemma   \ref{lem: reduction, separable} instead of \eqref{4eq: exp pair, 2} in the proof of Lemma \ref{lem: A-process, 2.2}. 
	\end{proof}

	
For the $B$-process, the error terms in \cite[Theorem 2.24]{Kratzel}   will be too weak for our later applications, so, instead, we shall use a variant of \cite[Lemma 5.5.3]{Huxley} as follows  (see Remark \ref{rem: two vdCorput}).

\begin{lem}
	Suppose that $g (y) \in C^4 [c, d]$ and $\varww (y) \in C^1 [c, d]$ are  real functions. Let $M, T , U $ be positive parameters, with $M \geqslant d-c$, such that
	\begin{align}\label{6eq: condition, 1}
		g^{(j)} (y ) \Lt T/ M^j , \qquad \varww^{(k)} (y) \Lt U/M^k, 
	\end{align} 
for $j=2, 3, 4$, $k = 0, 1$, and
\begin{align}\label{6eq: condition, 2}
	g'' (y ) \Gt T/ M^2.
\end{align} 
Define $
	a = g'(c)$,  $  b = g' (d)$. Let $\vepsilon,  \theta -  \sqrt{T}/M -1  \in  (0, 1] $.   Then
\begin{align}\label{6eq: variation of vdC transform}
	  \begin{aligned}
	  	& \sum_{c \leqslant m \leqslant d}  \varww (m) e(g(m)) =   \sum_{a + \theta \leqslant n \leqslant b - \theta } \frac {\varww (y_n ) e(g(y_n) - n y_n  )} {\sqrt{g'' (y_n ) / i}}   \\
	  & \  +  \sum_{a- \vepsilon \leqslant n < a+\theta} + \sum_{ b- \theta < n \leqslant b+ \vepsilon } \int_{c}^{d} \varww (y) e (g(y) - n y) \nd y   + O_{\vepsilon} (U \log (b-a+2)), 
	  \end{aligned}
\end{align}
where $y_n $ is the unique value in $[c, d]$ with
\begin{align*}
	g' (y_n) = n.  
\end{align*}
\end{lem}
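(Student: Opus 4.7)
The plan is to apply Poisson summation to the left-hand side and then evaluate each dual integral by one-dimensional stationary phase, following the pattern of Huxley \cite{Huxley}. After smoothly extending $\varww$ to zero on a collar of width $O(1)$ outside $[c,d]$ (the cost of this smoothing is $O(U)$, absorbable in the admitted error), Poisson summation yields
$$\sum_{c\leqslant m\leqslant d}\varww(m)e(g(m))=\sum_{n\in\BZ} I_n,\qquad I_n=\int_{c}^{d}\varww(y)e(g(y)-ny)\,\nd y.$$
By \eqref{6eq: condition, 2} the equation $g'(y_n)=n$ has a unique solution $y_n\in[c,d]$ precisely when $n\in[a,b]$, so the $n$-sum naturally splits into four ranges according to the position of $y_n$ relative to the endpoints.

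For the far tail $n<a-\vepsilon$ or $n>b+\vepsilon$, the derivative $|g'(y)-n|$ is uniformly $\Gt\max(|n-a|,|n-b|)$ on $[c,d]$; two integrations by parts, valid since $\varww\in C^1$ and $g\in C^4$ together with \eqref{6eq: condition, 1}, give $I_n\Lt_{\vepsilon} U/(|n-a|+1)^{2}+U/(|n-b|+1)^{2}$, and summation over such $n$ contributes $O_{\vepsilon}(U)$. For the bulk $a+\theta\leqslant n\leqslant b-\theta$, the assumption $\theta>\sqrt{T}/M+1$ combined with \eqref{6eq: condition, 2} places $y_n$ at distance $\Gt M/\sqrt{T}$ from $\{c,d\}$, and a one-dimensional version of Lemma \ref{lem: stationary phase} produces
$$I_n=\frac{\varww(y_n)\,e(g(y_n)-ny_n)}{\sqrt{g''(y_n)/i}}+R_n,$$
with $R_n\Lt UM/T$ in the interior and an additional boundary-proximity term of order $U/\mathrm{dist}(y_n,\{c,d\})$. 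Summing across the bulk, the interior piece contributes $\Lt UM/T\cdot(b-a)\Lt U$ by \eqref{6eq: condition, 1}, while the boundary-proximity piece, after the substitution $n\mapsto y_n$, integrates to $O(U\log(b-a+2))$. The remaining ranges $a-\vepsilon\leqslant n<a+\theta$ and $b-\theta<n\leqslant b+\vepsilon$ are retained as integrals and produce the two middle sums on the right-hand side of \eqref{6eq: variation of vdC transform}.

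The main technical obstacle lies in the transition zone where $n-a-\theta$ or $b-\theta-n$ is a small positive quantity, for there $y_n$ is interior but may be close to the nearer endpoint, so Lemma \ref{lem: stationary phase} does not apply cleanly. This is handled by a smooth partition of unity on $[c,d]$ isolating a neighbourhood of $y_n$ from a neighbourhood of the closer endpoint: the first piece is treated by the standard stationary phase formula with a uniform remainder, while on the second piece $g'(y)-n$ is bounded below, allowing integration by parts to produce a bound of order $U/\mathrm{dist}(y_n,\{c,d\})$. The total contribution sums to the claimed $O_{\vepsilon}(U\log(b-a+2))$; the logarithmic loss is intrinsic to the method and cannot be avoided.
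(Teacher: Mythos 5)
Your plan---Poisson summation followed by a split into far tail, transition range, and bulk, with stationary phase on the bulk---is exactly the architecture of Huxley's proof of \cite[Lemma 5.5.3]{Huxley}, which the paper cites and adapts by substituting the truncated Poisson formula \cite[Prop.\ 8.7]{IK} for Huxley's own truncation lemma. So the strategy matches. The trouble is in the execution of the error terms, which is precisely what this lemma is about.

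First, replacing the sharp cutoff by a smooth one on a collar and then applying classical Poisson produces dual integrals over the \emph{extended} domain, not over $[c,d]$; yet the two transition sums on the right-hand side of \eqref{6eq: variation of vdC transform} must be $\int_c^d$. Precisely in the transition ranges $n-a$ or $b-n$ is $O(\theta)$, so $g'(y)-n$ is small on the collar and you cannot dispose of the collar contribution by oscillation---the resulting comparison error is not $O(U\log(b-a+2))$. The truncated Poisson formula yields the sharp-cutoff dual integrals and the logarithmic error directly, which is why the paper invokes it instead. Second, the boundary-proximity remainder in the bulk is, from one integration by parts at the endpoints, $O(U/|n-a|)+O(U/|n-b|)$, not $O(U/\operatorname{dist}(y_n,\{c,d\}))$; since $|n-a|\asymp (T/M^2)\operatorname{dist}(y_n,c)$, your form is larger by a factor $\asymp T/M^2$, and after summing over $n$ it gives $O((T/M^2)\,U\log(b-a+2))$, which exceeds the claimed error whenever $T>M^2$. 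Third, the far tail argument asserts two integrations by parts in $y$, but $\varww$ is only $C^1$ with control of $\varww,\varww'$; a second derivative of $\varww$ is neither assumed nor bounded (and mollification only yields $\varww''\ll U/M$, not $U/M^2$). Also, the uniform lower bound for $n\notin[a,b]$ should read $|g'(y)-n|\Gt\min(|n-a|,|n-b|)$, not $\max$. These gaps are not cosmetic: they are exactly the points that the truncated Poisson formula is designed to bypass.
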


\begin{proof}
	The formula \eqref{6eq: variation of vdC transform} is clear from the proof of \cite[Lemma 5.5.3]{Huxley}. As for the truncated Poisson formula we use \cite[Proposition 8.7]{IK} instead of \cite[Lemma 5.4.3]{Huxley}. 
\end{proof} 
	
	\begin{lem}
	 Let $(\kappaup, \lambdaup)$ be a one-dimensional exponent pair as in Definition {\rm\ref{defn: exp pair, 1}}.   For  $g \in \boldsymbol{\mathrm{F}}_2^{\gamma}   (M_1, M_2, T_1, T_2, N  ) $ as in Definition {\rm\ref{5defn: class F(gamma)}}, we have 
	 \begin{align}\label{6eq: Poisson, 2}
	 	\begin{aligned}
	 		\sum_{(m_1, \shskip m_2) \shskip \in D} e (g  (m_1, m_2)) =    & \sum_{(n_1, \shskip n_2) \shskip \in E} \frac{e (f  (n_1, n_2))} { {f_{\snatural} (n_1, n_2)}}   + \Delta + \Delta^{\ssharp} + \Delta^{\sflat} + \Delta_1^{\oldstylenums{0}} + \Delta_2^{\oldstylenums{0}} , 
	 	\end{aligned}
	 \end{align}
	 where  $f \in \boldsymbol{\mathrm{F}}_2^{\gamma/(\gamma -1)}   (T_1/M_1, T_2/M_2, T_1, T_2, N  )$,
	 \begin{align}\label{6eq: bounds for f n}
	 	\frac {\partial^{j_1+j_2}} {\partial x_1^{j_1} \partial x_2^{j_2}} \frac 1 { f_{\snatural} (x_1, x_2) } \Lt \frac {M_1 M_2} {\sqrt{T_1 T_2}} \cdot \frac 1 {(T_1/M_1)^{j_1 } (T_2/M_2)^{ j_2}} 
	 \end{align}
	 for $j_1, j_2 = 0, 1$, 
	   $E$ is the image of $D$ under the map 
	 \begin{align*}
	 	\begin{aligned}
	 		x_1 =     {T_1 } /{M_1 } \cdot   \psi_1 '  (   {y_1} / {M_1 }  )   ,  \qquad 
	 		x_2   =   {T_2 } / {M_2 } \cdot   \psi_2 '  (   {y_2} / {M_2 }  )  ,
	 	\end{aligned} 	
	 \end{align*} 
 \begin{align}\label{6eq: Detla}
 	\Delta  = O \big( M_2    \log T_1   + \sqrt {T_1}     \log T_2   \big),   
 \end{align}
\begin{align}\label{6eq: Detla sharp}
	\Delta^{\ssharp}  = O \Big( \big( M_1/\sqrt{T_1} +1\big)  M_2^{\kappaup - \lambdaup + 1}  T_2^{\lambdaup - 1/2} + \big( M_2/\sqrt{T_2} +1\big)  M_1^{\kappaup - \lambdaup + 1}  T_1^{\lambdaup - 1/2} \Big),   
\end{align}
\begin{align}\label{6eq: Detla flat}
	\Delta^{\sflat} = O \big(\big( M_1/\sqrt{T_1} +1\big) \big( M_2/\sqrt{T_2} +1\big)\big),   
\end{align}
and
\begin{align}\label{6eq: Delta 0}
	\Delta_1^{\oldstylenums{0}} = \left\{ 
	\begin{aligned}
		\hskip -1pt & 0, & & \hskip -3pt \text{ if } N \Lt   M_1  , \\
		\hskip -1pt & O (N   {\textstyle M_2/\sqrt{T_1} }), & & \hskip -3pt \text{ if otherwise, }
	\end{aligned}\right. \quad  \Delta_2^{\oldstylenums{0}} = \left\{ 
\begin{aligned}
\hskip -1pt & 0, & & \hskip -3pt \text{ if } N \Lt   M_2  , \\
\hskip -1pt & O (N  {\textstyle   \sqrt{T_1/T_2}  })  , & & \hskip -3pt \text{ if otherwise.}
\end{aligned}\right.
\end{align}  
	\end{lem}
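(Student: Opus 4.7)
The plan is to apply the one-dimensional transform \eqref{6eq: variation of vdC transform} twice in succession: first to the inner sum over $m_1$ with $m_2$ held fixed, and then to the resulting outer sum over $m_2$ with the newly introduced dual variable $n_1$ held fixed. The almost-separability hypothesis \eqref{6eq: almost separable} guarantees that at each stage the second derivative of the relevant phase is dominated by its $T_i\psi_i''$-term, so conditions \eqref{6eq: condition, 1}--\eqref{6eq: condition, 2} hold with $(T,M)=(T_1,M_1)$ and then with $(T_2,M_2)$. In the second stage the weight is $\varww(m_2) = 1/\sqrt{g''(y_{n_1}(m_2);m_2)/i}$ of size and derivative bound $U \asymp M_1/\sqrt{T_1}$, which follows from the chain rule applied to the implicit function $y_{n_1}(m_2)$ that solves the first-stage stationary-point equation.

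At each stage that stationary-point equation is uniquely solvable by Lemma \ref{lem: solution in 1-dimension}, and the composite system will reproduce the joint stationary-phase setup of Section \ref{sec: analytic lemmas}. The dual phase $f(n_1, n_2)$ is the critical value of $g(y_1, y_2) - n_1 y_1 - n_2 y_2$, and, arguing as in Lemma \ref{lem: asymptotic of L} together with Lemmas \ref{lem: rho n 2}--\ref{lem: rho n 3}, one verifies $f \in \boldsymbol{\mathrm{F}}_2^{\gamma/(\gamma-1)}(T_1/M_1, T_2/M_2, T_1, T_2, N)$: the two Legendre-type transforms produce the separable summands, and the $N\omega$-term re-emerges (via Lemmas \ref{lem: rho n 2}--\ref{lem: rho n 3}) as a smooth perturbation of the same size. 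The derivative bound \eqref{6eq: bounds for f n} on $1/f_{\snatural}(n_1, n_2) \asymp M_1 M_2/\sqrt{T_1 T_2}$ then follows from the chain rule and the scaling $\partial y_i/\partial x_i \asymp M_i^2/T_i$ of the inverse map.

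For the error accounting: the $O(\log T_1)$ residual per $m_2$ in the first transform sums to $O(M_2 \log T_1)$, while the $O(U\log T_2)$ residual per $n_1$ in the second, summed over the $O(T_1/M_1)$ values of $n_1$, yields $O(\sqrt{T_1}\log T_2)$, producing $\Delta$. The endpoint integrals from each transform, supported on dual-variable intervals of length $\theta_i \asymp \sqrt{T_i}/M_i+1$, will be estimated by interchanging the order of summation and integration and then applying the $B$-transformed one-dimensional exponent pair $B(\kappaup,\lambdaup) = (\lambdaup - 1/2, \kappaup + 1/2)$ to the resulting 1D exponential sum in $m_2$ or $m_1$, which lies in the appropriate class $\boldsymbol{\mathrm{F}}_1^{\gamma}$; this reproduces $\Delta^{\ssharp}$, and the doubly-endpoint case (where $n_1$ and $n_2$ are both near their respective endpoints) contributes $\Delta^{\sflat}$.

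Finally, the corrections $\Delta_1^{\oldstylenums{0}}$ and $\Delta_2^{\oldstylenums{0}}$ arise only when $N$ exceeds $M_1$ or $M_2$, respectively. In that regime the range $[g'(c_i), g'(d_i)]$ of the dual variable $n_i$ drifts with the other variable by $O(N/M_i)$, so replacing this moving range by the single rectangle $E$ leaves sliver contributions. Counting the affected boundary $(n_1, m_2)$-pairs as $\Lt (N/M_1)M_2$ with main-term amplitude $M_1/\sqrt{T_1}$, and the affected $(n_1, n_2)$-pairs as $\Lt (T_1/M_1)(N/M_2)$ with amplitude $M_1 M_2/\sqrt{T_1 T_2}$, reproduces the bounds in \eqref{6eq: Delta 0}. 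The main technical obstacle will be executing this sliver accounting uniformly so that the dual domain is left as the clean rectangle $E$ claimed in the statement, since later applications of the lemma require partial summation over a genuine product set; everything else reduces to careful tracking of one-dimensional van der Corput estimates.
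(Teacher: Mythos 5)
Your overall strategy---two successive applications of the one-dimensional transform \eqref{6eq: variation of vdC transform} followed by rectangular regularization---coincides with the paper's proof, and your accounting of $\Delta$, $\Delta^{\sflat}$, and $\Delta_1^{\oldstylenums{0}}, \Delta_2^{\oldstylenums{0}}$ is correct. Applying $B(\kappaup,\lambdaup)$ to the original $m_2$-sum of length $M_2$ produces the same quantity $M_2^{\kappaup-\lambdaup+1}T_2^{\lambdaup-1/2}$ as the paper's application of $(\kappaup,\lambdaup)$ to the transformed $n_2$-sum of length $T_2/M_2$, so that part of your phrasing is only superficially different. You worry at the end that the rectangular regularization is the main obstacle, but in fact your sliver counting for $\Delta_1^{\oldstylenums{0}}$ and $\Delta_2^{\oldstylenums{0}}$ already matches the paper's stated bounds.

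The genuine gap is in your estimate of $\Delta^{\ssharp}$. Schematically the side-strip contribution has the form
\[
\sum_{n_1}\int_{c_1}^{d_1}\sum_{m_2}\,w\cdot e\big(g_1(y_1)-n_1y_1-T_2\psi_2(m_2/M_2)+N\omega(y_1/M_1,m_2/M_2)\big)\,\nd y_1,
\]
with $n_1$ running over $\asymp\sqrt{T_1}/M_1+1$ values and $y_1$ over an interval of length $\asymp M_1$, and the target requires \emph{both} the saving $M_1/\sqrt{T_1}$ from the oscillation of $g_1(y_1)-n_1y_1$ and the exponent-pair saving $M_2^{\kappaup-\lambdaup+1}T_2^{\lambdaup-1/2}$ from the $m_2$-sum. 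These two savings do not combine by a simple interchange, because the mixing term $N\omega$ couples $y_1$ and $m_2$: once you bound the $m_2$-sum first, the $y_1$-oscillation is lost and the $y_1$-integral can only be bounded trivially by $M_1$, giving $\big(\sqrt{T_1}/M_1+1\big)M_1\,M_2^{\kappaup-\lambdaup+1}T_2^{\lambdaup-1/2}$, which misses the stated $\Delta^{\ssharp}$ by a factor of $\sqrt{T_1}$. The paper addresses exactly this point with the ``divide and partially integrate'' device of \cite[Lemmas 5.1.2--5.1.3]{Huxley}: split $[c_1,d_1]$ into $O(\log)$ pieces on which $|g_1'(y_1)-n_1|$ is dyadically constant, integrate by parts in $y_1$ so that the oscillatory saving appears as an explicit, $m_2$-independent factor $\Lt 1/|g_1'-n_1|$, apply the exponent-pair bound to the resulting $n_2$-sums on each piece, and only then bound the remaining $y_1$-integrals trivially; summing the dyadic scales recovers the factor $M_1/\sqrt{T_1}$. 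This step is the ingredient missing from your proposal.
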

	
	\begin{proof}
		
		Let $\vepsilon_1, \vepsilon_2, \theta_1 - \sqrt{T_1}/M_1 -1, \theta_2 - \sqrt{T_2}/M_2- 1 \in (0, 1]$. Let $E = [a_1, b_1] \times [a_2, b_2]$, $E' = [a_1-\vepsilon_1, b_1+\vepsilon_1] \times [a_2-\vepsilon_2, b_2+\vepsilon_2]$, and $E^{\snatural} = [a_1 + \theta_1, b_1 - \theta_1] \times [a_2+\theta_2, b_2-\theta_2]$.  Partition $ E '  $ into nine rectangles with $E^{\snatural}$ at the center, and let  $E^{\ssharp}$ or $E^{\sflat}$   denote the union of  four rectangles at  the vertices or along the  sides of $E$ respectively. Moreover, suitably choose $\vepsilon_1, \vepsilon_2, \theta_1, \theta_2 $ so that the vertices of $E'$ and $E^{\snatural}$ are of distance  at least  $1/4$, say, away from integer points. 
		 
		Our idea is to apply twice the formula \eqref{6eq: variation of vdC transform} to the $m_1$- and $m_2$-sums along with rectangular regularization. For the moment, assume   
		\begin{align}\label{6eq: N<M}
			N \Lt \min  \{ M_1, M_2  \}, 
		\end{align} with small implied constant, so that no error term occurs in the process of regularization. Then the exponential sum on the left of \eqref{6eq: Poisson, 2} is transformed into the sum
		\begin{align*}
			S^{\snatural} + S^{\ssharp} + S^{\sflat} + \Delta ,
		\end{align*}
	where  $S^{\snatural}$, $ S^{\ssharp}$, and $ S^{\sflat}$ are sums over $(n_1, n_2)$ in  $E^{\snatural}$, $ E^{\ssharp}$, and $ E^{\sflat}$, respectively, and $\Delta$ is as in \eqref{6eq: Detla}.   Remarks on  the second application of   \eqref{6eq: variation of vdC transform} are in order. First, we need to change the order of summations, so the first regularization is very important. Second, we need to use Lemma \ref{lem: Bruno, 1} and \ref{lem: solution in 1-dimension}  to verify the conditions \eqref{6eq: condition, 1} and \eqref{6eq: condition, 2}.  
	
	Now we describe and analyze $S^{\snatural}$, $ S^{\ssharp}$, and $ S^{\sflat}$ in  more explicit terms. 
	
	Firstly,
		\begin{align}\label{6eq: S n}
		S^{\snatural} =	\sum_{(n_1, n_2) \in E^{\snatural}} \frac{e (f  (n_1, n_2))} { {f_{\snatural} (n_1, n_2)}}  ,
		\end{align}
	where  $f   $ and $f_{\snatural}$ are defined in the same manner as in \eqref{6eq: f =, simple}--\eqref{6eq: psi =, simple}; it suffices to know that  $f \in \boldsymbol{\mathrm{F}}_2^{\gamma/(\gamma -1)}   (T_1/M_1, T_2/M_2, T_1, T_2, N  )$ and that  $f_{\snatural}$ has bounds as in \eqref{6eq: bounds for f n} by the work in \S \ref{sec: analytic lemmas}. Note that if the summation in \eqref{6eq: S n} is extended from $E^{\snatural}$ onto $E$, then we have an extra error of the form $\Delta^{\ssharp}$ as in \eqref{6eq: Detla sharp} by applying \eqref{4eq: exp pair, 1} to either the $m_1$- or the $m_2$-sum (this step is superfluous, as in practice one may  apply \eqref{4eq: exp pair, 2} to the sum $S^{\snatural}$ directly.).
	
	Secondly, $S^{\ssharp}$ splits into four similar sums, one of which is of the form
	\begin{align}
		 S^{\ssharp}_{11} =   \sum_{a_1' \leqslant n_1 < a_1^{\snatural} } \sum_{a_2^{\snatural} \leqslant n_2 \leqslant b_2^{\snatural} } \int_{c_1}^{d_1} \frac{e (g_1 (y_1)  - n_1 y_1  - f_2 (n_2) + h_1 (y_1;   n_2) )} { {f_{\snatural 1} (y_1;   n_2)}} \nd y_1,
	\end{align}
where $a_1' = a_1- \vepsilon_1$, $a_1^{\snatural} = a_1 + \theta_1$, $a_2^{\snatural} = a_2 + \theta_2$, $b_2^{\snatural} = b_2 - \theta_2$, $g_1 \in \boldsymbol{\mathrm{F}}_1^{\gamma} (  M_1, T_1 )$, $ f_2 \in \boldsymbol{\mathrm{F}}_1^{\gamma/(\gamma -1)} ( T_1/M_1, T_1 ) $,  $$h_1 (y_1;   n_2) = T_1 \delta_1 (  y_1/M_1; M_2n_2/T_2), $$  with   $\partial_1^{j_1} \partial_2^{j_2} \delta_1  (y_1 ; x_2) \Lt_{j_1, j_2} N/T_1 $ (by Lemma \ref{lem: solution in 1-dimension}), and
\begin{align*}
	 \frac {\partial^{j_1 + j_2 }} {\partial y_1^{j_1 } \partial x_2^{j_2}} \frac 1 { f_{\snatural 1} (y_1;   x_2) } \Lt \frac { M_2} {\sqrt{ T_2}} \cdot \frac 1 {M_1^{j_1 } (T_2/M_2)^{ j_2}} 
\end{align*}
for $j_1, j_2 = 0, 1$.  
Then $S^{\ssharp}_{11}$ is bounded by the first  term of $ \Delta^{\ssharp} $ in \eqref{6eq: Detla sharp} on exploiting the one-dimensional second derivative for the $y_1$-integral and the bound \eqref{4eq: exp pair, 1} for the $n_2$-sum. A cautious reader may find a subtle issue with the `mixing' error phase $ h_1 (y_1;   n_2) $. To address this, we use the simple arguments in the proofs of \cite[Lemma 5.1.2, 5.1.3]{Huxley}: divide and partially integrate the $y_1$-integral, estimate  the resulting $n_2$-sums by \eqref{4eq: exp pair, 1} and finally the   $y_1$-integrals trivially. 

Thirdly, 
\begin{align}
	S^{\sflat} = \sum_{(n_1, n_2) \in E^{\sflat}} \int \hskip -4pt \int_D e (g  (y_1, y_2) - n_1 y_1 - n_2 y_2) \nd y_1 \nd y_2. 
\end{align}	
This yields  $ \Delta^{\sflat} $ in \eqref{6eq: Detla flat} by the two-dimensional second derivative test (see for example \cite[Lemma 4]{Srinivasan-Lattice-2}). Note that the Hessian matrix here is `almost diagonal'.

Finally, with the aid of the second derivative tests, one may verify that the rounding errors  arising from the rectangular regularizations are $\Delta^{\oldstylenums{0}}_1$ and $\Delta^{\oldstylenums{0}}_2$ as in \eqref{6eq: Delta 0} in case that \eqref{6eq: N<M} is not true. 
	\end{proof}


	

	\begin{cor}[$B$-process]\label{lem: B-process, 2}  
If $(\kappaup, \lambdaup)$ is an exponent pair for $( T_1 , T_2 , T_1/ M_1, T_2/ M_2, \allowbreak N   )$, then   $	B (\kappaup, \lambdaup)  $ is an exponent pair for $( T_1 , T_2 ,   M_1,   M_2, N  )$ if   the following   conditions hold{\rm:}  
\begin{align} 
	\label{6eq: assume from error, 1}     M_2  < (M_1 M_2)^{\kappaup - \lambdaup + 1} (T_1 T_2)^{\lambdaup - 1/2} ,
\end{align} 
	\begin{align} 
		\label{6eq: assume from error, 2}       \sqrt{T_1}   <   (M_1 M_2)^{\kappaup - \lambdaup + 1} (T_1 T_2)^{\lambdaup - 1/2},  
	\end{align}  
\begin{align}
	\label{6eq: assume from error, N}    N    <      \min   \big\{ M_1/T_1^{\vepsilon}, M_2/T_2^{\vepsilon} \big\} ,  
\end{align}  
	and   the third condition {\rm\eqref{6eq: assume from error, 1}} {\rm(}when it fails{\rm)} may be replaced by
	\begin{align}\label{6eq: assume N < M, 1}
		N  T_1   <  (M_1 M_2)^{\kappaup - \lambdaup +1} (T_1 T_2)^{\lambdaup} ,
	\end{align}   
\begin{align}\label{6eq: assume N < M, 2}
	N   M_2 \sqrt{T_2}   <  (M_1 M_2)^{\kappaup - \lambdaup +1} (T_1 T_2)^{\lambdaup} .
\end{align}  
	\end{cor}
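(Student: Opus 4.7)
The plan is to substitute the van der Corput transform \eqref{6eq: Poisson, 2} into the definition of a $2$-dimensional exponent pair, and then to check that every piece on the right-hand side of \eqref{6eq: Poisson, 2} is majorised by the $B(\kappaup, \lambdaup)$ target
\begin{align*}
(M_1 M_2)^{\kappaup - \lambdaup + 1} (T_1 T_2)^{\lambdaup - 1/2 + \vepsilon},
\end{align*}
which is the bound associated to $B(\kappaup, \lambdaup) = (\lambdaup - 1/2, \kappaup + 1/2)$ via \eqref{4eq: exp pair, 2}.

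First I would apply \eqref{6eq: Poisson, 2} to $S^2_g (M_1, M_2)$. Because the dual phase satisfies $f \in \boldsymbol{\mathrm{F}}_2^{\gamma/(\gamma-1)}(T_1/M_1, T_2/M_2, T_1, T_2, N)$, the exponent-pair hypothesis applies directly. Using the bounds \eqref{6eq: bounds for f n} on $1/f_{\snatural}$ (size $M_1 M_2/\sqrt{T_1 T_2}$ with derivatives behaving on the natural scales $T_1/M_1, T_2/M_2$) together with two-dimensional partial summation over sub-rectangles of $E$, the main dual sum is majorised by
\begin{align*}
\frac{M_1 M_2}{\sqrt{T_1 T_2}} \cdot \left(\frac{T_1}{M_1} \cdot \frac{T_2}{M_2}\right)^{\lambdaup - \kappaup}(T_1 T_2)^{\kappaup + \vepsilon} = (M_1 M_2)^{\kappaup - \lambdaup + 1}(T_1 T_2)^{\lambdaup - 1/2 + \vepsilon},
\end{align*}
which is exactly the required bound. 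Admissibility of $\gamma/(\gamma-1)$ for $(\kappaup, \lambdaup)$ is the standing hypothesis under which $B(\kappaup, \lambdaup)$ is declared admissible for $\gamma$.

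Next I would verify that each error term in \eqref{6eq: Poisson, 2} fits under the same bound. The two summands of $\Delta$ in \eqref{6eq: Detla} are absorbed verbatim by \eqref{6eq: assume from error, 1} and \eqref{6eq: assume from error, 2}. For $\Delta^{\ssharp}$ in \eqref{6eq: Detla sharp} and $\Delta^{\sflat}$ in \eqref{6eq: Detla flat} I would use the elementary observation that, for any exponent pair $(\kappaup, \lambdaup) \in [0, 1/2] \times [1/2, 1]$ and any parameters $1 < M_j < T_j$, one has coordinate-wise
\begin{align*}
M_j/\sqrt{T_j} + 1 \Lt M_j^{\kappaup - \lambdaup + 1} T_j^{\lambdaup - 1/2}
\end{align*}
(split into the cases $M_j \geqslant \sqrt{T_j}$ and $M_j < \sqrt{T_j}$ and use $M_j < T_j$); applied in each coordinate, this places both \eqref{6eq: Detla sharp} and \eqref{6eq: Detla flat} inside the target. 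Finally, condition \eqref{6eq: assume from error, N} forces $\Delta_1^{\oldstylenums{0}} = \Delta_2^{\oldstylenums{0}} = 0$ by the case distinction in \eqref{6eq: Delta 0}; when \eqref{6eq: assume from error, N} fails, the substitute conditions \eqref{6eq: assume N < M, 1} and \eqref{6eq: assume N < M, 2} are precisely the rearranged inequalities $N \sqrt{T_1/T_2} \Lt (M_1 M_2)^{\kappaup - \lambdaup + 1}(T_1 T_2)^{\lambdaup - 1/2}$ and $N M_2/\sqrt{T_1} \Lt (M_1 M_2)^{\kappaup - \lambdaup + 1}(T_1 T_2)^{\lambdaup - 1/2}$ needed to absorb $\Delta_2^{\oldstylenums{0}}$ and $\Delta_1^{\oldstylenums{0}}$ respectively.

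The main obstacle will be the book-keeping in the partial summation of the first step: each sub-rectangular restriction of the dual sum must still lie in the admissible class $\boldsymbol{\mathrm{F}}_2^{\gamma/(\gamma-1)}$ with uniform implied constants, so that \eqref{4eq: exp pair, 2} applies to every piece. This is routine given the derivative bounds for $f$ established in the preceding lemma and an Abel-type double summation by parts against $1/f_{\snatural}$; once granted, every remaining verification reduces to an inequality among the quantities already appearing in the hypothesis of the corollary.
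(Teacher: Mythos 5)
Your proposal is correct and follows essentially the same route as the paper's proof: apply the two-dimensional van der Corput transform \eqref{6eq: Poisson, 2}, bound the dual sum by \eqref{4eq: exp pair, 2} after removing the weight $1/f_{\snatural}$ via double partial summation, absorb $\Delta^{\ssharp}$ and $\Delta^{\sflat}$ coordinate-wise using $M/\sqrt{T}+1 \Lt M^{\kappaup-\lambdaup+1}T^{\lambdaup-1/2}$, and match \eqref{6eq: assume from error, 1}--\eqref{6eq: assume N < M, 2} to the remaining error terms $\Delta$, $\Delta^{\oldstylenums{0}}_1$, $\Delta^{\oldstylenums{0}}_2$. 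Your identification of which substitute condition absorbs which rounding error (\eqref{6eq: assume N < M, 1} with $\Delta_2^{\oldstylenums{0}}$, \eqref{6eq: assume N < M, 2} with $\Delta_1^{\oldstylenums{0}}$) is also correct.
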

	
	 \begin{proof}
	 	Apply \eqref{4eq: exp pair, 2} to the sum on the right of \eqref{6eq: Poisson, 2}. The error terms $\Delta^{\ssharp}$ and $\Delta^{\sflat}$ in \eqref{6eq: Detla sharp} and \eqref{6eq: Detla flat} are satisfactory as $M /\sqrt{T }  < M ^{\kappaup - \lambdaup + 1}  T ^{\lambdaup - 1/2}$ for $M < T$. The conditions  \eqref{6eq: assume from error, 1}--\eqref{6eq: assume N < M, 2}  correspond to $\Delta $, $\Delta^{\oldstylenums{0}}_1$, and $\Delta^{\oldstylenums{0}}_2$ in \eqref{6eq: Detla} and \eqref{6eq: Delta 0}. 
	 \end{proof}
 
\begin{rem}
We remark that {\rm\eqref{6eq: assume N < M, 1}} and {\rm\eqref{6eq: assume N < M, 2}} are much weaker than \eqref{6eq: assume from error, N}, but in practice the latter is easier to verify and it implies the `almost separable' condition {\rm\eqref{6eq: almost separable}}. 
\end{rem}

	\subsection{Process $A  B A B A B A$} \label{sec: ABABABA}

	Let $T = M^{\valpha}$ and $N = M^{\valpha -1}$ with $\valpha > 5/2$. We shall start with $ (M,T,M,T, N) $ and use the exponent pair $BA    ( {13} / {84},  {55} / {84} )  = (55/194, 55/97)$ at the end. Table \ref{table: act on (M,T)} and \ref{table: exp pairs} exhibit the intermediate parameters $ (M_i, T_i) $ ($i=1, 2$), the variations of $N$, and the corresponding exponent pairs $(\kappaup, \lambdaup)$ in the process of applying $A BABABA$, where, in view of Lemma \ref{lem: A-process, 2.2},
	\begin{equation}\label{6eq: bounds for hi}
		\begin{aligned}
			&1 \leqslant h_i < H, \quad 1 \leqslant h_i'  <  H' h_i^{ \frac { 401} {760} } , \quad 
			1 \leqslant h_i''   < H'' {h_i'}^{ \frac {1} {2}  } / h_i^{\frac {207} {304} },   
		\end{aligned} 
	\end{equation} 
	with
	\begin{align}
		\label{6eq: H =}
		H = M^{ \frac {1326}{1879} - \frac {359} {1879} \valpha } , \hskip 9pt H'= M^{ \frac { 401} {760} \valpha - \frac {477} {380}} , \hskip 9pt H'' = M^{\frac {283} {152} - \frac {207} {304} \valpha }.   
	\end{align}

{\renewcommand{\arraystretch}{1.5}
	\begin{table} 
		\begin{center}
			\caption{ } \label{table: act on (M,T)}
			\begin{tabular}{ c|c | c }  
				\hline
				Process   & $(M_i, T_i )$ & Variation of $N $    \\
				\hline
				& $(M,T)$ &   $1$  \\  
				$A$ & $  \lp M,   {Th_i} / {M} \rp$ & $   (h_1+h_2) / M $  \\ 
				$AB$ & $  \lp {Th_i} / {M^{2}},   {Th_i} / {M}  \rp$ &   $1$  \\ 
				$ABA$ & $  \lp {Th_i} / {M^{2}},   M h_i'  \rp$ &   $  (h_1' /   h_1 + h_2'/h_2) M^{2}/T  $   \\ 
				$ABAB$ & $  \lp {M^{3} h_i'} / {T h_i},   M h_i'  \rp$ &   $1$  \\ 
				$ABABA$ & $  \lp {M^{3} h_i'} / {T h_i},   T h_i h_i''/M^{2}  \rp$  &  $ (h_1  h_1'' /   h_1' + h_2  h_2'' /h_2' ) T/M^{3}$  \\
				$ABABAB$ & $  \lp T^2 h_i^2 h_i''    /M^{5} h_i'   ,   T h_i h_i''/M^{2}  \rp$  &   $1$ \\
				$ABABABA$ & &   \\
				\hline 
			\end{tabular}
		\end{center}
	\end{table} 
}

{\renewcommand{\arraystretch}{1.5}
	\begin{table}
		\begin{center}
			\caption{ } \label{table: exp pairs}
			\begin{tabular}{ c|c  }  
				\hline
				Process   & $(\kappaup, \lambdaup )$  \\
				\hline
				$ABABABA$ & {\Large $  \lp \frac {359} {3758}, \frac{2791}{3758} \rp$}   \\ 
				$BABABA$ &  {\Large $  \lp \frac {359} {1520}, \frac{3}{5} \rp$} \\ 
				$ABABA$ & {\Large $  \lp \frac{1}{10}, \frac {1119} {1520}  \rp$} \\  
				$BABA$ &  {\Large $  \lp \frac{1}{4}, \frac {359} {608}  \rp$}   \\ 
				$ABA$ & {\Large $  \lp \frac {55} {608}, \frac{3}{4} \rp$}    \\ 
				$  BA$ &  {\Large $  \lp \frac {55} {249}, \frac{152}{249} \rp$} \\ 
				$A$ & {\Large $  \lp \frac {55} {498}, \frac{359}{498} \rp$}  \\ 
				& {\Large $  \lp \frac {55} {194}, \frac{55}{97} \rp$}  \\
				\hline 
			\end{tabular}
		\end{center}
	\end{table} 
}

	Firstly, in order to apply Lemma  \ref{lem: A-process, 2.2} to the middle two $A$-processes, we need
	\begin{align*}
		{Th_i} / {M} > \big( {Th_i} / {M^{2}} \big)^2, \qquad M h_i' > \big( {M^{3}h_i'} / {T h_i}\big)^2, 
	\end{align*}
	as in \eqref{6eq: assumption for A}, or equivalently, 
	\begin{align*}
		h_i < M^{3}/T, \qquad h_i' /h_i^2 < T^2 / M^{5}.  
	\end{align*}
	Since $h_i < H$ and $ h_i' /h_i^2 < H' $ by \eqref{6eq: bounds for hi}, it suffices that
	\begin{align}\label{6eq: interval 1}
		 \frac{2846}{1119}  \leqslant	\valpha \leqslant   \frac {4311} {1520}   , 
	\end{align}
with ${2846}/{1119} = 2.54334...$ and $  {4311} / {1520} =2.83618...$.
	
	Secondly,  to apply Lemma  \ref{lem: A-process, 2.3} at the last step, in view of Table  \ref{table: act on (M,T)}, we need 
	\begin{align}\label{6eq: last N}
		    (h_1+h_2) \bigg( \frac{h_1'}    {h_1} + \frac{h_2'}    {h_2} \bigg)   \bigg( \frac{h_1  h_1''}    {h_1'} + \frac{h_2  h_2''}    {h_2'} \bigg)    \Bigg( \frac {{h_1'}^{\hskip -1pt \frac {55} {249} }}   {h_1^{\frac {55}{83}} {h_1''}^{\hskip -1pt \frac {110} {249} }} + \frac {{h_2'}^{\hskip -1pt \frac {55} {249} }}   {h_2^{\frac {55}{83}} {h_2''}^{\hskip -1pt \frac {110} {249} }} \Bigg) M^{\frac {28}{83} \valpha - \frac {362} {249}} \Lt 1 . 
	\end{align}
	By \eqref{6eq: bounds for hi}, we have
	\begin{align*}
	h_i < H, \quad	h_i'/h_i < H', \quad h_i  h_i'' /   h_i' < H'' H^{ \frac {97} {304}}, \quad {{h_i'}^{\hskip -1pt \frac {55} {249} }} /  {h_i^{\frac {55}{83}} {h_i''}^{\hskip -1pt \frac {110} {249} }} <   {H'}^{\hskip -1pt \frac {55} {249} },  
	\end{align*} 
	so the product on the left of \eqref{6eq: last N} is at most
	\begin{align}\label{6eq: product H}
		\Lt  H^{\frac {401}{304} } {H'}^{\hskip -1pt \frac {304}{249}}  H''  M^{\frac {28}{83} \valpha - \frac {362} {249}}  .  
	\end{align}
	Actually, this can be attained on choosing $h_1 = 1$, $h_2 = H$, $h_1' = H'$, $h_2' = 1$, $h_1''=1$, and $h_2'' = H'' / H^{\frac {207}{304}}$. 
	Numerical calculations by    \eqref{6eq: H =}  confirm that \eqref{6eq: product H} does not exceed the unity if $\valpha$ is in the range \eqref{6eq: interval 1}. 
	
	Thirdly, we  verify the `almost separable' condition  \eqref{6eq: almost separable} and the conditions \eqref{6eq: assume from error, 1}, \eqref{6eq: assume from error, 2}, \eqref{6eq: assume from error, N}, or \eqref{6eq: assume N < M, 1},  \eqref{6eq: assume N < M, 2} for the three $B$-processes. For the last   $B$-process, we  verify \eqref{6eq: assume N < M, 1} and \eqref{6eq: assume N < M, 2} in place of \eqref{6eq: assume from error, N} as it may fail (for $\valpha \geqslant 2.67653...$). Except for the last $B$- (or $A$-) process, we do not need to verify  \eqref{6eq: almost separable} as it is clearly implied by \eqref{6eq: assume from error, N}.   To this end, we use \eqref{6eq: bounds for hi} to reduce these conditions to  
\begin{align}\label{6eq: verify N}
H < M^{3-\valpha-\vepsilon}, \qquad H H' < M^{ \valpha-2-\vepsilon}, \qquad 	H^{ \frac {401}{304}} H' H'' < M^{1-\vepsilon}, \qquad 
\end{align}	  
\begin{align}\label{6eq:  verify N, 2}
	H^{  \frac {1093}{608} }  {H'}{}^{\hskip -1pt \frac 3 4}	{H''}{}^{ \hskip -1pt \frac {637}{498}} < M^{\frac {1109}{249} - \frac {111}{83} \valpha }, \qquad H^{  \frac {173}{152} }  {H'}{}^{\hskip -1pt \frac {401}{249} }    {H''}{}^{\hskip -1pt \frac {194}{249}}    
	<   M^{\frac {27}{166} \valpha + \frac {113} {249} }, 
\end{align} 
\begin{align}\label{6eq: verify error, 1}
	1	< M^{\frac {359}{760} \valpha - \frac {283}{380}}, \qquad	H^{\frac  {401}{608} } < 	M^{  \frac {173}{152} - \frac {97}{304}  \valpha } , \qquad H^{\frac {14}{83}} {H'}{}^{\hskip -1pt \frac {152}{249}} < M^{  \frac {55}{83} \valpha  - \frac {385} {249}   },
\end{align}
\begin{align}\label{6eq: verify error, 2}
	H^{  \frac {401}{1520}} < M^{\frac {93}{380} -\frac {21}{760} \valpha   }, \qquad {H'}{}^{\hskip -1pt \frac 1 4} < 	M^{\frac {207}{304}  \valpha - \frac {207}{152}} , \qquad H^{\frac {291}{608}} {H''}{}^{ \hskip -1pt \frac {139}{498}} < M^{ \frac {611} {249} - \frac {139}{166} \valpha   }.
\end{align}
More explicitly, \eqref{6eq: almost separable} and \eqref{6eq: assume from error, N} are reduced to \eqref{6eq: verify N}, \eqref{6eq: assume N < M, 1} and \eqref{6eq: assume N < M, 2} to \eqref{6eq:  verify N, 2}, \eqref{6eq: assume from error, 1}   to \eqref{6eq: verify error, 1}, and \eqref{6eq: assume from error, 2}   to \eqref{6eq: verify error, 2}, respectively. It can be checked directly that these are valid for $\valpha $ in the range \eqref{6eq: interval 1}.

	Finally, since  $ \lp   {13} / {84},  {55} / {84} \rp $ has exceptional set  $  \{    1, 3/2, 2, 5/2, 3, 7/2, 4  \}$, it is easy to determine when the starting phase exponent $\gamma $ is admissible. 

	\begin{thm} \label{thm: vdC}  For  $g \in \boldsymbol{\mathrm{F}}_2^{\gamma } ( M , M , M^{\valpha}, M^{\valpha}, M^{\valpha-1} )  $ we have
		\begin{align}\label{6eq: final estimate, q=1}
			S^2_g (M, M) \Lt_{  \gamma ,  \vepsilon}  M^{\frac {2432} {1879} + \frac {359} {1879} \valpha + \vepsilon}
		\end{align}
		if $\valpha \in [{2846}/{1119}, {4311}/ {1520}  ]$ and  $\gamma    \notin \big\{ 1, 2, 3 \big\} \cup (2 +  \{1/2,  3/5, 8/13, 5/8, 12/19, \allowbreak 7/11, \allowbreak 16/25,  9/14, 2/3 \})$. 
	\end{thm}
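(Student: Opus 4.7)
The plan is to run the composite process $ABABABA$ on the almost-separable double sum $S_g^2(M,M)$, starting from the target two-dimensional exponent pair $(\kappaup,\lambdaup)=(359/3758,2791/3758)$ at the top of Table~\ref{table: exp pairs} and reducing seven steps down to needing only the one-dimensional pair $(55/194,55/97)=BA(13/84,55/84)$ built from Bourgain's pair. Table~\ref{table: act on (M,T)} simultaneously tracks the evolution of the parameters $(M_i,T_i)$ and of the mixing size $N$ through the seven steps, starting from $(M,M,M^\valpha,M^\valpha,M^{\valpha-1})$.

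First I address the four $A$-processes in $ABABABA$. The outermost $A$, applied to the original quintuple, uses Lemma~\ref{lem: A-process, 2.2}; its hypothesis \eqref{6eq: assumption for A} is trivially satisfied since $T=M^\valpha>M^2$. The two interior $A$-processes (the third and fifth letters) also use Lemma~\ref{lem: A-process, 2.2}, and now \eqref{6eq: assumption for A} becomes non-trivial: substituting the bounds \eqref{6eq: bounds for hi} with $H,H',H''$ as in \eqref{6eq: H =}, it reduces exactly to the double inequality \eqref{6eq: interval 1}, confining $\valpha\in[2846/1119,4311/1520]$. The terminal $A$ uses Lemma~\ref{lem: A-process, 2.3} in place of Lemma~\ref{lem: A-process, 2.2}: its separability hypothesis asks that the propagated $N$ multiplied by $H_1/M_1+H_2/M_2$ be $\Lt 1$, which after maximizing over $h_i,h_i',h_i''$ (the extremum occurring at the corner $h_1=1, h_2=H, h_1'=H', h_2'=1, h_1''=1, h_2''=H''/H^{207/304}$) becomes the upper bound \eqref{6eq: product H}; a direct numerical check confirms this is $\Lt 1$ throughout the same interval of $\valpha$.

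Next come the three $B$-processes, each invoking Corollary~\ref{lem: B-process, 2}. For each of these I must verify the almost-separability condition \eqref{6eq: almost separable} together with the error conditions \eqref{6eq: assume from error, 1}, \eqref{6eq: assume from error, 2}, \eqref{6eq: assume from error, N}; for the final $B$-process, where \eqref{6eq: assume from error, N} may fail when $\valpha$ is close to $4311/1520\approx 2.836$, one substitutes the weaker pair \eqref{6eq: assume N < M, 1}--\eqref{6eq: assume N < M, 2}. Substituting \eqref{6eq: bounds for hi} and \eqref{6eq: H =} and maximizing over the Weyl-difference parameters, these hypotheses collapse to the explicit inequalities \eqref{6eq: verify N}, \eqref{6eq:  verify N, 2}, \eqref{6eq: verify error, 1}, \eqref{6eq: verify error, 2} in $\valpha$ alone, each of which is verified by direct numerical inspection to hold on $[2846/1119,4311/1520]$. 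After the seventh step, the transformed sum has $N\Lt 1$, so Lemma~\ref{lem: reduction, separable} permits invoking any one-dimensional exponent pair as a two-dimensional one, in particular the one-dimensional pair $(55/194,55/97)=BA(13/84,55/84)$ built from Bourgain's.

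Admissibility of $\gamma$ propagates simultaneously: each $A$-step shifts $\gamma\mapsto\gamma-1$ and each $B$-step shifts $\gamma\mapsto\gamma/(\gamma-1)$, so pulling the exceptional set $\{1,3/2,2,5/2,3,7/2,4\}$ of Bourgain's pair back through the string $ABABABA$ yields precisely the finite exceptional set listed in the hypothesis of the theorem. Once all conditions are in place, Definition~\ref{defn: exp pair, 2} applied to $(\kappaup,\lambdaup)=(359/3758,2791/3758)$ with the initial parameters $(M,M,M^\valpha,M^\valpha)$ yields $S_g^2(M,M)\Lt(M\cdot M)^{\lambdaup-\kappaup}(M^\valpha\cdot M^\valpha)^{\kappaup+\vepsilon}=M^{2432/1879+359\valpha/1879+\vepsilon}$, which is exactly \eqref{6eq: final estimate, q=1}. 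The main obstacle is the joint parameter tuning: the values $H,H',H''$ in \eqref{6eq: H =} have been chosen precisely so that the hypotheses of the four $A$- and three $B$-processes, together with the auxiliary inequalities \eqref{6eq: verify N}--\eqref{6eq: verify error, 2} and the terminal bound \eqref{6eq: last N}, all simultaneously close up on the single interval \eqref{6eq: interval 1}, and verifying this joint compatibility over the full range of $\valpha$ is where the bulk of the work lies.
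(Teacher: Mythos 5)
Your proposal matches the paper's proof essentially step for step: the same $ABABABA$ process starting from Bourgain's pair via $BA(13/84,55/84)=(55/194,55/97)$, the same use of Lemma~\ref{lem: A-process, 2.2} for the interior $A$-steps and Lemma~\ref{lem: A-process, 2.3} for the terminal one, the same verification of Corollary~\ref{lem: B-process, 2}'s hypotheses reduced to \eqref{6eq: verify N}--\eqref{6eq: verify error, 2}, and the same final invocation of Definition~\ref{defn: exp pair, 2} with $(\kappaup,\lambdaup)=(359/3758,2791/3758)$. The only minor imprecision is your remark that \eqref{6eq: assume from error, N} fails only ``when $\valpha$ is close to $4311/1520$'' --- the paper records that it fails already for $\valpha\geqslant 2.67653\ldots$, a substantial portion of the interval --- but this does not affect the correctness of the argument.
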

	
	The $\beta$-barrier of $ABABABABA \lp   {13} / {84},  {55} / {84} \rp $ is at $ 995/608 =1.63651... $, considerably improving $ 219/139 = 1.57554... $   in \S \ref{sec: simple process ABABA}. 
	
	\subsection{Remarks on the second van der Corput method}\label{sec: remarks, 2}
	
	Numerical calculations suggest that  $A ( {13} / {84},  {55} / {84} )$ has the optimal $\beta$-barrier $ 1.64545... $, and  $ABABA ( {13} / {84},  {55} / {84} )$ has the second best $\beta$-barrier $ 1.63816... $ (this could probably be confirmed by the algorithm in \cite[\S 5]{GK-vdCorput}). However, neither of these can be achieved by the method, because the $N$-parameter remains above the unity after $A$ or $ABABA$ for $\valpha > 5/2$. It might be of interest to note that the  $\beta$-barrier of $(AB)^q A ( {13} / {84},  {55} / {84} )$ decreases for $q$ even and increases for $q$ odd to the same limit as $q \ra \infty$. 
	
	For $\valpha $ large, in principle, one might expect $A^q BABABA$ ($q= 2, 3, ...$) to yield non-trivial results as in \S \ref{sec: simple process ABABA}.  However, this is not the case, because  the condition to bring the last $N$-parameter below the unity becomes too strong.    Alternatively, if we choose   $( {13} / {84},  {55} / {84} ) $ instead of $  (55/194, 55/97)$ at the end, the method would work, but only for $\valpha \in [2.54605..., 2.84046...]$ if $q=1$ and for $\valpha \in [3.70128..., 3.76069...]$ if $q=2$. Moreover, this yields a non-trivial bound for the exceptional cases when  $\gamma   \in 2 +  \{  8/13,   12/19, \allowbreak 7/11, \allowbreak 16/25,  9/14  \}$.


	\section{Proof of theorems}

	\subsection{Proof of Theorem \ref{main-theorem 1} and \ref{main-theorem 2}}\label{sec: proof by vdC}

	For either the logarithm case or the   generic monomial case for $\beta \neq 1+1/q$ {\rm(}$q=2, 3, ...${\rm)},   we have developed    in \S \ref{sec: van der Corput} the van der Corput methods of exponent pairs for the type of double sums like $S_{\psi}^2 ( N, T )$. More precisely,  on applying Theorem \ref{thm: simple vdC} and \ref{thm: vdC}, we obtain non-trivial estimates of the form
	\begin{align}\label{2eq: bound for S(N, T)}
		S_{\psi}^2 ( N, T ) \Lt (T/N)^{2\lambdaup - 2 \kappaup} T^{2 \kappaup+\vepsilon} 
	\end{align}
	for  certain exponent pairs $ (\kappaup, \lambdaup) \in [0, 1/2] \times [1/2, 1]$ depending on the value of $\valpha = \log T/ \log (T/N)$.  As \eqref{2eq: bound for S(N, T)} is non-trivial, it is necessary that
	\begin{align}\label{2eq: condition on exp pair}
		N^{  \kappaup -\lambdaup + 1 } < T^{1 - \lambdaup -\vepsilon}. 
	\end{align} 
	Substituting \eqref{2eq: bound for S(N, T)} into \eqref{2eq: S = S/T},   we obtain
	\begin{align*}
		S (\vv; n, p_1, p_2) \Lt  \frac{T^{2\lambdaup   - 1 +\vepsilon}} {N^{2\lambdaup -2 \kappaup}} + \frac 1 {N^2} \Lt \frac{T^{2\lambdaup   - 1 +\vepsilon}} {N^{2\lambdaup -2 \kappaup}} ,
	\end{align*}
	where the second inequality is clear from $T > N$, hence by  \eqref{2eq: S(p1, p2) <}
	\begin{align*}
		\left|S (n, p_1, p_2)\right| \Lt  \frac {P T^{2\lambdaup   - 1+\vepsilon} } {{N^{2\lambdaup -2 \kappaup}} \sqrt{X n}}       ,
	\end{align*}
	and, in view of \eqref{S(off)}, we have the estimate
	\begin{align}\label{2eq: S off}
		\begin{aligned}
			{S}_{\mathrm{off}}^2 & \Lt	\frac{N^{2 \kappaup - 2\lambdaup + 3 } T^{2\lambdaup   - 1+\vepsilon} \sqrt{X}   }{ P^{\star 2}  P     K  }      \mathop{\sum \sum}_{p_1,\shskip p_2 \shskip \sim P}  {\sum_{ 1\leqslant |n| \Lt N/K }}  \frac 1 {\sqrt{n}} \\
			& \Lt \frac{N^{2 \kappaup - 2\lambdaup + 3 } T^{2\lambdaup   - 1+\vepsilon} \sqrt{X}   }{    P     K  } \sqrt{\frac {N} {K} }  \\
			& = \frac{N^{2 \kappaup - 2\lambdaup + 3 } T^{2\lambdaup   - 1+\vepsilon}     }{    \sqrt{K}   }. 
		\end{aligned}
	\end{align}
	We deduce from \eqref{4eq: S(N)=S(N,X,P)}, \eqref{5eq: S<Sdiag+Soff}, \eqref{2eq: S diag}, and \eqref{2eq: S off} that
	\begin{align*}
		S_{f} (N) \Lt \lp \sqrt{KN} + \frac {T^{\lambdaup-1/2} N^{  \kappaup - \lambdaup + 3/2 }  } {  K^{1/4} }   \rp N^{\vepsilon} + \sqrt{T} N^{\vepsilon} + \frac {N} {K} + \frac {N \sqrt{K} } { P} .
	\end{align*}
	Note that $N/K < \sqrt{KN}$ and that the choice $P = \sqrt{N}$ satisfies \eqref{2eq: N<PK} because of  \eqref{2eq: sqrt N < K}.  
	Now we choose $K = N^{\frac 4 3 (\kappaup-\lambdaup+1)} T^{\frac 4 3 \lambdaup -\frac 2 3}$.   It follows from \eqref{2eq: condition on exp pair} that $K < T^{\frac 2 3 -\vepsilon}$ and hence   \eqref{5eq: K<T} is  satisfied. 
	Since  $ K > N^{\frac 4 3 \kappaup + \frac 2 3} $ by $T > N$,  \eqref{2eq: sqrt N < K} is also satisfied, and moreover, $ \sqrt{KN} > {\sqrt{T}}$ for $N^{\frac 5 3} > T$ (see \eqref{2eq: N<T}). Therefore we conclude with
	\begin{align}\label{2eq: bound for S(N)}
		S_f (N) \Lt N^{\frac 7 6 - \frac 2 3 (\lambdaup - \kappaup)} T^{\frac 2 3 \lambdaup -\frac 1 3+\vepsilon} ,
	\end{align}
	and Theorem \ref{main-theorem 1} and \ref{main-theorem 2} follow from a translation with  $\beta = \valpha / (\valpha -1)$ and $\gamma  = 0$ or $\valpha$.

	\subsection{Proof of Theorem \ref{main-theorem: Vinogradov}} 
	By applying   Vinogradov's method to the $m_1$-sum and trivial estimation to the $m_2$-sum in the double sum $ S_{\psi}^2 ( N, T ) $ in \eqref{2eq: S(p1,p2)}, we infer that 
	\begin{align}\label{7eq: Vinogradov for S(N, T)}
		S_{\psi}^2 ( N, T ) \Lt_{\psi} (T/N)^{2 - c / \gamma^2}  
	\end{align}
	for $\gamma \geqslant 4$, where   $c > 0$ is an absolute constant.   Vinogradov's method is used here in the form of \cite[Theorem 8.25]{IK}, while their constant $c = 1/2^{18}$ may be improved as the main conjecture in Vinogradov's mean value theorem is now proven in the work of Bourgain, Demeter, Guth \cite{Bourgain-Vin} and Wooley \cite{Wooley-Vin}. Theorem \ref{main-theorem: Vinogradov} follows from the same   arguments in \S \ref{sec: proof by vdC}.

	\subsection{Proof of Theorem \ref{main-theorem: Weyl}}\label{sec: Weyl}

	We have the following result  in \cite[Theorem 8.4]{IK}  by the Weyl method.
	
	\begin{lem}\label{lem: Weyl}
		Let $ k \geqslant 2$.  Suppose that $g \in C^{\infty} [ M, 2 M]  $  satisfies 
		\begin{align*}
			y^k \big| g^{(k)} (y) \big| \asymp_{k} F .
		\end{align*}
		Then for $ [c, d] \subset [M, 2 M] $ we have
		\begin{align*}
			S^1_g (M) = \sum_{c \leqslant m \leqslant d} e (g (m )) \Lt_{k} \big(F/M^k + 1/F \big)^{1/ k 2^{k-2}}  M \log 3 M . 
		\end{align*}
	\end{lem}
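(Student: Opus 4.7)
The estimate is a classical bound of Weyl--van der Corput type, essentially \cite[Theorem 8.4]{IK}. My plan is to apply the Weyl--van der Corput differencing inequality \eqref{6eq: Weyl-vdC, 1} iteratively $k-1$ times, thereby reducing the $k$-th derivative hypothesis on $g$ to a first-derivative statement on the innermost sum, and then to close with the Kusmin--Landau (monotonic first derivative) bound.

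Concretely, after $k-1$ iterations of \eqref{6eq: Weyl-vdC, 1} (each with a common differencing parameter $H$, for simplicity) the surviving innermost sum takes the shape
\[
    S_{\mathbf h}(M) \;=\; \sum_{m} e\bigl(\Delta_{h_1}\cdots\Delta_{h_{k-1}} g(m)\bigr), \qquad \Delta_h f(y) := f(y+h) - f(y),
\]
taken over the appropriate $m$-range depending on $h_1,\ldots,h_{k-1}$. A direct Taylor expansion using the hypothesis $y^k |g^{(k)}(y)|\asymp F$ shows that the phase $\Delta_{h_1}\cdots\Delta_{h_{k-1}} g$ has first derivative of size $\asymp h_1 h_2 \cdots h_{k-1}\, F / M^k$ on $[M, 2M]$. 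The Kusmin--Landau lemma then gives
\[
    |S_{\mathbf h}(M)| \ll \min\!\left( M,\ \frac{M^k}{h_1 h_2 \cdots h_{k-1}\, F}\right).
\]

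Substituting back up the chain of $k-1$ Weyl inequalities and summing $\mathbf h$ over $[1,H]^{k-1}$ produces, after the usual bookkeeping, a bound of the form
\[
    |S^1_g(M)|^{2^{k-1}} \ll M^{2^{k-1}}\bigl( H^{-1} \;+\; H^{k-2}\, F/M^k \;+\; H^{-(k-2)}\, M^k/F\bigr)
\]
up to harmless logarithmic factors and trivial error terms of the same shape (each differencing step contributing an $M^2/H$ term). Optimizing $H \in [1, M]$ to balance $1/H$ against whichever of the other two contributions is dominant produces precisely the exponent $1/(k\cdot 2^{k-2})$ on the combination $F/M^k + 1/F$. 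The base case $k=2$ is the classical second derivative test: direct optimization yields $\sqrt F + M/\sqrt F \asymp M(F/M^2 + 1/F)^{1/2}$, which verifies the arithmetic.

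The main obstacle in the execution is the careful accounting of all trivial error terms $M^2/H$ accumulated at each of the $k-1$ differencing steps, and the enforcement of the range restriction $H \leq d-c \leq M$ that \eqref{6eq: Weyl-vdC, 1} imposes. When the balanced choice of $H$ would exceed $M$, one simply falls back on the trivial bound $S^1_g(M) \leq d-c \ll M$; a short case check using $F/M^k + 1/F \geq 2 M^{-k/2}$ shows that this trivial bound is dominated by the claimed estimate in that regime, so the argument closes uniformly in $F$.
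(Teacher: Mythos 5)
First, note what the paper actually does here: it gives no proof at all, but simply quotes the statement from \cite[Theorem 8.4]{IK} (``We have the following result in [IK, Theorem 8.4] by the Weyl method''), so you are reconstructing a classical result rather than an argument of the paper. Your general plan (iterated Weyl--van der Corput differencing followed by a derivative test) is the right family of ideas, but your endgame has a genuine gap. The Kusmin--Landau lemma requires the derivative of the phase to be \emph{monotone} as well as bounded away from the integers. After $k-1$ differencings the phase is $\Delta_{h_1}\cdots\Delta_{h_{k-1}}g$; the size of its derivative is indeed $\asymp h_1\cdots h_{k-1}F/M^k$ by the mean value theorem, but its monotonicity is governed by $\Delta_{h_1}\cdots\Delta_{h_{k-1}}g''$, which is of the shape $h_1\cdots h_{k-1}g^{(k+1)}(\xi)$, and the hypothesis $y^k|g^{(k)}(y)|\asymp F$ gives no control whatsoever on $g^{(k+1)}$: the differenced derivative may change sign arbitrarily often, and Kusmin--Landau simply does not apply. (Separately, your bound $\min\big(M, M^k/(h_1\cdots h_{k-1}F)\big)$ is unjustified once $h_1\cdots h_{k-1}F/M^k\geqslant 1$, since the differenced derivative may then sit near integers and the innermost sum can be of trivial size; one must keep $H$ small enough that this regime never occurs, which further constrains the optimization.)

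The standard repair --- and in essence the proof behind \cite[Theorem 8.4]{IK} --- is to stop after $k-2$ differencings and close with the \emph{second}-derivative test: the second derivative of $\Delta_{h_1}\cdots\Delta_{h_{k-2}}g$ equals $h_1\cdots h_{k-2}\,g^{(k)}(\xi)$, which is single-signed and of two-sided size $\asymp h_1\cdots h_{k-2}F/M^k$ precisely because of the hypothesis on $g^{(k)}$, and the second-derivative test needs nothing more. This is also why your ``base case $k=2$'' had to be quoted as the second-derivative test rather than as the $k=2$ instance of your own scheme (one differencing plus Kusmin--Landau), which would suffer the same monotonicity problem. Finally, the displayed bookkeeping inequality with exponent $2^{k-1}$ is asserted rather than derived, and it is not strong enough even if granted: at $k=2$ its last term yields $M^2/\sqrt{F}$ instead of $M/\sqrt{F}$, and at $k=3$ optimizing it gives $M^{5/2}F^{1/2}+M^{11/2}F^{-1/2}$ for $|S|^4$, which does not imply the claimed $(F/M^3+1/F)^{2/3}M^4$ in the middle range of $F$. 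Carrying out the induction with $k-2$ differencings and the second-derivative test at the bottom (or simply citing \cite[Theorem 8.4]{IK}, as the paper does) produces the exponent $1/(k2^{k-2})$ and the logarithmic factor as stated.
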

	
	In view of Proposition \ref{prop: non-generic}, we choose $ k = q+2$, and $F = { K^2} / {  |\vw|}$ or $ { K^4} / T |\vw|^2$ according as $q$ is odd or even. By \eqref{2eq: x >K2/N},  
	\begin{align}\label{9eq: range of F}
		K \Lt  K^2 /|\vw| \Lt N,
	\end{align}
	and hence the range of $F$ is determined. 
If we apply Lemma \ref{lem: Weyl}   to the $m_1$-sum and trivial estimation to the $m_2$-sum in the double sum $ S_{\psi}^2 ( N, T ) $ in \eqref{2eq: S(p1,p2)}, then 
	\begin{align}\label{9eq: bound for S (n, T)}
		S_{\psi}^2 ( N, T ) \Lt_{a, \shskip q}   F ^{1/ (q+2) Q} (T/N)^{2 - 1/Q +\vepsilon} + (T/N)^{2+\vepsilon} / F^{1/(q+2) Q }.
	\end{align}

	Consider first the case when $q$ is odd. 
	For convenience, we make the assumption
	\begin{align}\label{9eq: assumption on F}
		K >  N^{1/2 + 1/q},
	\end{align}
	slightly stronger than  \eqref{2eq: sqrt N < K},  so that  $F^2 \Gt (T/N)^{q+2} $ by \eqref{9eq: range of F}, and hence the first term in \eqref{9eq: bound for S (n, T)} dominates. By  \eqref{S(off)}, \eqref{2eq: S(p1, p2) <}, and \eqref{2eq: S = S/T}, along with $X = P^2 K^2/N$, we have 
	\begin{align}\label{2eq: S off, q odd} 
		{S}_{\mathrm{off}}^2 & \Lt	 {N^{  1/Q  } T^{1 - 1/Q+\vepsilon}    }   {\sum_{ 1\leqslant |n| \Lt N/K }}  (N/n)^{1/2  + 1/ (q+2)Q}   \Lt  \frac {N^{1+1/Q} T^{1-1/Q+\vepsilon} } {K^{1/2 -1 / (q+2)Q}}.  
	\end{align}
	We deduce from \eqref{4eq: S(N)=S(N,X,P)}, \eqref{5eq: S<Sdiag+Soff}, \eqref{2eq: S diag}, and \eqref{2eq: S off, q odd} that
	\begin{align*}
		S_{a, 1+1/q} (N) \Lt \lp \sqrt{KN} + \frac {N^{1/2+1/2Q} T^{1/2-1/2Q } } {K^{1/4 -1 / (2q+4)Q}}  \rp N^{\vepsilon} + \sqrt{T} N^{\vepsilon} + \frac {N} {K} + \frac {N \sqrt{K} } { P} .
	\end{align*}
	Recall that $ T = N^{1+1/q}$. Therefore we    obtain the bound \eqref{1eq: bound for S, q odd}    on choosing $P = \sqrt{N}$ and $K = T^{\frac {2   Q -2/ (q+1)} {3     Q - 2 /(q+2)} } $ and verifying   \eqref{5eq: K<T},  \eqref{2eq: N<PK}, and \eqref{9eq: assumption on F}.   
	
	Now let $q$ be even. By arguments similar to the above, we have
	\begin{align}\label{2eq: S off, q even} 
		{S}_{\mathrm{off}}^2 & \Lt	   \frac {N^{1+1/Q} T^{1-1/Q-1/ (q+2)Q +\vepsilon} } {K^{1/2 - 2 / (q+2)Q}} + \frac {N  T^{1 + 1/ (q+2)Q +\vepsilon} } {K^{1/2 + 2 / (q+2)Q}}.  
	\end{align}
	We   make the assumption
	\begin{align}\label{9eq: assumption on F, 2}
		K < N^{  3 / 4 +   1 /q} ,
	\end{align}
	so that the second term in \eqref{2eq: S off, q even} dominates.  Consequently,
	\begin{align*}
		S_{a, 1+1/q} (N) \Lt \lp \sqrt{KN} + \frac { \sqrt{N} T^{1/2+1/(2q+4) Q } } {K^{1/4 + 1 / ( q+2)Q}}  \rp N^{\vepsilon} + \sqrt{T} N^{\vepsilon} + \frac {N} {K} + \frac {N \sqrt{K} } { P} .
	\end{align*}
	Therefore we    obtain the bound \eqref{1eq: bound for S, q even}    on choosing $P = \sqrt{N}$ and $K = T^{\frac {2  (q+2)Q+2} {3   (q+2) Q +4} } $ and verifying    \eqref{2eq: N<PK}, \eqref{2eq: sqrt N < K}, and \eqref{9eq: assumption on F, 2}.

	Finally, we remark that Theorem \ref{main-theorem: Weyl} might be improved for large $q$ by the Vinogradov method, but   \cite[Theorem 8.25]{IK} must be adapted to our needs.

\end{document}